\newcommand{\RR}{\mathbb{R}}
\newcommand{\II}{\mathbb{I}}
\newcommand{\JJ}{\mathbb{J}}
\newcommand{\NN}{\mathbb{N}}
\newcommand{\ZZ}{\mathbb{Z}}
\newcommand{\bou}{\boldsymbol{\mathrm{b}}}
\newcommand{\e}{\varepsilon}
\newcommand{\Db}{\mathcal{D}^b}
\newcommand{\DbAI}{\mathcal{D}^b(A_{\II})}
\newcommand{\upbnd}{\partial_{\uparrow}}
\newcommand{\dwnbnd}{\partial_{\downarrow}}
\newcommand{\codep}{\mathsf{C}}
\renewcommand{\S}{\mathfrak{S}}
\newcommand{\li}{\langle i\rangle}
\newcommand{\preR}{\Lambda_{\RR}}
\newcommand{\preI}{\Lambda_{\II}}
\newcommand{\preJ}{\Lambda_{\JJ}}
\newcommand{\boldell}{\boldsymbol{\ell}}
\newcommand{\ReppreI}{\mathrm{Rep}(\preI)}
\newcommand{\sawtooth}{\partial|_{[a,b]}}
\newcommand{\kvec}{\Bbbk\text{-}\mathrm{vec}}
\newcommand{\kVec}{\Bbbk\text{-}\mathrm{Vec}}
\newtheorem{thm}{Theorem}
\newtheorem{theorem}{Theorem}[section]
\newtheorem{proposition}[theorem]{Proposition}
\newtheorem{corollary}[theorem]{Corollary}
\newtheorem{lemma}[theorem]{Lemma}
\newtheorem*{conjecture}{Conjecture}
\theoremstyle{definition}
\newtheorem{definition}[theorem]{Definition}
\newtheorem{remark}[theorem]{Remark}
\newtheorem{example}[theorem]{Example}
\newtheorem{notation}[theorem]{Notation}
\newcommand{\Int}{\operatorname{Int}}
\newcommand{\Tor}{\operatorname{Tor}}
\renewcommand{\S}{\mathfrak{S}}
\newcommand{\id}{\textrm{id}}
\newcommand{\unif}{\textrm{unif}}
\DeclareMathOperator{\End}{\mathrm{End}}
\DeclareMathOperator{\Hom}{\mathrm{Hom}}
\DeclareMathOperator{\repp}{\mathrm{rep}}
\DeclareMathOperator{\Rep}{\mathrm{Rep}}
\DeclareMathOperator{\rfp}{\mathrm{rep}^{\text{fp}}}
\DeclareMathOperator{\coker}{\mathrm{coker}}
\DeclareMathOperator{\gen}{\mathrm{gen}}
\DeclareMathOperator{\supp}{\mathrm{supp}}
\DeclareMathOperator{\len}{\mathrm{len}}
\DeclareMathOperator{\lub}{\mathrm{lub}}
\author{J.~D.~Rock}
\address{JDR: Algebra Group, Department of Mathematics, KU Leuven, Leuven, Belgium.
\newline
\indent
Department of Mathematics W16, UGent, Ghent, Belgium}
\email{jobdaisie.rock@kuleuven.be}
\author{H.~Thomas}
\address{HT: CP 8888, Succursale Centre-ville, Montréal QC, Canada H3C 3P8}
\email{thomas.hugh\textunderscore{}r@uqam.ca}
\title{Preprojective categories of type $A$}
\date{\textcolor{red}{\today}}
\begin{document}
	
	\begin{abstract}
		We introduce a continuous version of preprojective algebras of type $A$.
		In particular, we are interested in the preprojective category over an open, bounded subinterval $\II$ of $\RR$, denoted $\preI$.
		We study the representable projective modules and define a useful type of sub- and quotient module called decorous modules.
		These are completely described by a function from the closure $\overline{\II}$ of $\II$ to $\RR$ whose `slopes' are not too steep anywhere.
		We later use these to describe permuton ideals, a generalization of the support $\tau$-tilting ideals of preprojective algebras of type $A_n$, which we call permutation ideals. 
		Once we have our generalization, we show that permutation ideals can be recovered from permuton ideals.
		Moreover, permutation ideals are $\tau$-rigid and we show an analogous property for our permuton ideals.
		Along the way, we classify all the brick $\preI$-modules.
	\end{abstract}
	
	\maketitle
	
	\section{Introduction}
	
	\subsection{Context and goal}		
        There is a generalization of the representation theory of a linearly oriented $A_n$ quiver which replaces the discrete quiver by an interval $\mathbb I$ of the real line. (For simplicity, we will restrict our discussion here to the case of open, though not necessarily bounded, intervals.)  A representation $M$ of $\mathbb I$  associates a finite-dimensional vector space $M(x)$ to each point $x\in\mathbb I$. For any $x_1<x_2$ in $\mathbb I$, there is an associated map from $M(x_2)$ to $M(x_1)$; these maps satisfy certain natural compatibility relations. The representation theory of such intervals is now well-understood \cite{BCB20,R20,IRT23}.  Familiar features of the $A_n$ case reappear in this setting. In particular, the finitely presentable indecomposable representations correspond to subintervals $(a,b]$ of $\mathbb I$. The representation theory of intervals turns out to be relevant to applications in topological data analysis.
        See, for example, \cite{Obook}. 

          In a similar spirit, one might ask about continuous versions of other finite-dimensional algebras. In general, it is not clear how this should be approached. 
          One approach to this, via thread quivers, is studied in \cite{PRY}.
          In the present paper, we consider the problem of defining a continuous version of the preprojective algebras of type $A$. Similarly to the case of the continuous linearly oriented type $A$ quiver, we start with an open interval $\mathbb I$ of the real line. A representation $M$ associates to each $x\in \mathbb I$ a finite-dimensional vector space. For $x_1<x_2$, there is a map from $M(x_1)$ to $M(x_2)$, and a map from $M(x_2)$ to $M(x_1)$, subject to certain natural relations which we shall explain shortly.

          Since the preprojective algebra of type $A_n$ is wild for $n\geq 6$ \cite{ES}, it would be unreasonable to hope to give an explicit description of the full module category of the continuous preprojective algebra. Instead, we shall describe an interesting class of modules. Recall that the radical filtation of an indecomposable projective module of type $A_n$ has the appearance of a (discrete) diamond. For our continuous preprojective algebra, the representable indecomposable projectives look like (continous) diamonds. We will primarily focus our attention their subquotients, which still admit a two-dimensional description. We call these \emph{sheet modules}. We think of them as corresponding to a diamond with a top part and a bottom part removed. 

          The basic support $\tau$-tilting modules of a preprojective algebra of type $A_n$ form a collection of two-sided ideals, each indexed by a permutation $w\in S_{n+1}$, the Weyl group of type $A_n$ \cite{Miz}. We refer to these as \emph{permutation ideals}. 
          We consider an analogous construction for the continuous preprojective algebra. The suitable indexing objects turn out to be a generalization of permutations known as \emph{permutons} \cite{HK,GGKK}; we call the corresponding objects \emph{permuton ideals}.  
          
	\subsection{Organization and Contributions}
	In Section~\ref{sec:finite} we recall the relevant notions about preprojective algebras of type $A_n$.
	We consider certain projective resolutions and lifts of these resolutions to representations of $\Db(A_n)$ (Section~\ref{sec:finite:projectives}).
	We recall the notion of permutation ideals, 
        and some of their well known properties (Section~\ref{sec:finite:permutations}). 
	
	In Section~\ref{sec:category} we define a preprojective category of type $A$ over the reals (Section~\ref{sec:category:definition for R}) and over a subinterval $\II$ of $\RR$ (Section~\ref{sec:category:definition for I}), where we consider all endomorphisms ``of the same length'' to be the same.
	For $\II$, we denote this category by $\preI$.
	In the present paper we restrict our attention to open, bounded intervals $\II\subset\RR$.
	That is $\II=(a,b)$ for $a<b\in\RR$.
	In fact, we show that, up to equivalence, there are only three continuous preprojective categories of type $A$, when we consider open subintervals of $\RR$ (Proposition~\ref{prop:preprojective classes}).
	The classes are bounded (such as $(0,1)$), half bounded (such as $(0,+\infty)$), and $\RR$ itself.
	As a consequence, we consider $\II=(a,b)$ but often just use $\II=(0,1)$ for convenience.
	
	In Section~\ref{sec:category:projectives}, we discuss the representable projective $\preI$-modules.
	We also consider the lift of the representable projectives to representations of $\DbAI:=\Db(\rfp(A_{\II}))$, the bounded derived category of finitely-presented representations of a type $A_\RR$ quiver, where the quiver is rescaled to $\II$.
	The lifts are thin representations.
	
	In Section~\ref{sec:subs and quots of projs} we turn our attention to submodules and quotient modules of representable projectives $P_x$, for each $x\in\II$.
	We also consider lifts of these modules to representations of $\DbAI$, which are thin representations, and use this to guide our intuition.
	We define \emph{decorous} submodules and quotient modules 
        (Definition~\ref{def:decorous}). 
	Decorous modules include the 0 module and each representable projective.
	
	Again, we can assume $\II=(0,1)$ and we denote by $\bar{\II}$ the closure $[0,1]$.
	We show that for a short exact sequence $M\hookrightarrow P_x\twoheadrightarrow N$, we have $M$ is decorous if and only if $N$ is decorous (Proposition~\ref{prop:decorous short exact sequence}).
	We show that decorous submodules of $P_x$ are in bijection with functions $\partial:\overline{\II}\to \RR_{\geq 0}$ such that $\partial(\min\overline{\II})=x-\min\overline{\II}$, $\partial(\max\overline{\II})=\max\overline{\II}-x$, and, for all $y\neq z\in \II$, we have $|\partial(y)-\partial(z)|\leq |y-z|$ (Proposition~\ref{prop:boundaries and decorous modules}).
	The main result from this section is the following
	\begin{thm}[Corollary~\ref{cor:permuton functions and decorous submodules}]\label{thm intro:decorous}
		Let $\boldsymbol{\partial}$ be the set of all $\partial:\overline{\II}\to \RR$ such that $|\partial(y)-\partial(z)|\leq |y-z|$, for all $y,z\in\overline{\II}$, and $\partial(\max\overline{\II})\neq \partial(\min\overline{\II})\pm 1$.
		Then there is a bijection
		\[
			\xymatrix{
				\boldsymbol{\partial}/\{\partial\sim\partial' \text{ if } (\exists a\in\RR) (\forall y\in\overline{\II}) (\partial(y)=\partial'(y)+a)\}
				\ar@{<->}[d] \\
				\{\text{decorous submodules of representable projectives}\}.
			}
		\]
	\end{thm}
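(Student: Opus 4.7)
The plan is to deduce the corollary from Proposition~\ref{prop:boundaries and decorous modules}, which for each fixed $x\in\II$ identifies decorous submodules of $P_x$ with functions $\partial_x:\overline{\II}\to\RR_{\geq 0}$ that are Lipschitz (constant $1$) on $\II$ and satisfy the pinned boundary conditions $\partial_x(\min\overline{\II})=x-\min\overline{\II}$, $\partial_x(\max\overline{\II})=\max\overline{\II}-x$. For convenience take $\II=(0,1)$, so the boundary conditions read $\partial_x(0)=x$ and $\partial_x(1)=1-x$. The task reduces to showing that $\boldsymbol{\partial}/\sim$ is in natural bijection with the disjoint union, over $x\in(0,1)$, of these pinned function sets.

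Given $\partial\in\boldsymbol{\partial}$ with $c_1:=\partial(0)$ and $c_2:=\partial(1)$, the Lipschitz hypothesis forces $|c_1-c_2|\leq 1$, and the extra requirement $c_2\neq c_1\pm 1$ sharpens this to $|c_1-c_2|<1$. I would then define
\[
x \;:=\; \tfrac{1+c_1-c_2}{2}\,\in\,(0,1), \qquad a\;:=\;\tfrac{1-c_1-c_2}{2}, \qquad \partial' \;:=\; \partial+a.
\]
A direct check gives $\partial'(0)=x$ and $\partial'(1)=1-x$. Applying the Lipschitz bound at each endpoint yields $\partial'(y)\geq \partial'(0)-y=x-y$ and $\partial'(y)\geq \partial'(1)-(1-y)=y-x$, so $\partial'(y)\geq|y-x|\geq 0$ throughout $\overline{\II}$; the Lipschitz condition extends from $\II$ to $\overline{\II}$ by continuity. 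Hence $\partial'$ satisfies the hypotheses of Proposition~\ref{prop:boundaries and decorous modules} at this $x$, yielding a decorous submodule of $P_x$. Because $x$ depends only on $c_2-c_1$ and $\partial'$ depends only on $\partial$ up to additive constants, the construction descends to a well-defined map $\boldsymbol{\partial}/\sim\,\to\,\{\text{decorous submodules}\}$.

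For the inverse, starting from a decorous submodule of some $P_x$, Proposition~\ref{prop:boundaries and decorous modules} supplies the pinned function $\partial'$; then $\partial'(1)-\partial'(0)=1-2x\in(-1,1)$ certifies membership in $\boldsymbol{\partial}$. The two assignments are mutually inverse: from $[\partial]$ one recovers the parameter $x=\tfrac{1+c_1-c_2}{2}$ and the representative $\partial+a$ of the same class, and from a pinned $\partial'$ one reads off $x'=\tfrac{1+\partial'(0)-\partial'(1)}{2}=x$ with shift $a=0$.

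The only real subtlety, and the point at which the statement has bite, is showing that the side condition $\partial(\max\overline{\II})\neq\partial(\min\overline{\II})\pm 1$ is exactly sharp: the Lipschitz inequality already gives $|c_1-c_2|\leq 1$, and the excluded equality cases correspond precisely to the degenerate parameter values $x=0$ and $x=1$ that would place the recovered basepoint on $\partial\II$ rather than inside $\II$. Everything else is algebraic manipulation of the boundary formulas from Proposition~\ref{prop:boundaries and decorous modules}, so no substantial new obstacle is expected.
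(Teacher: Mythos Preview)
Your proof is correct and follows essentially the same route as the paper. The paper's own argument simply says to combine Propositions~\ref{prop:boundaries and decorous modules} and~\ref{prop:boundary functions determine decorous submodules}; the latter is precisely your construction of $x=\tfrac{1}{2}(1+c_1-c_2)$ and the shifted function $\partial'=\partial+a$ with $a=\tfrac{1-c_1-c_2}{2}$, after which Proposition~\ref{prop:boundaries and decorous modules} is invoked. Your write-up is in fact slightly more thorough: you explicitly verify the nonnegativity $\partial'(y)\geq|y-x|\geq 0$ required by Proposition~\ref{prop:boundaries and decorous modules}, which the paper's proof of Proposition~\ref{prop:boundary functions determine decorous submodules} leaves implicit, and you spell out the inverse and the well-definedness on equivalence classes.
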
 
	
	In Section~\ref{sec:sheets} we consider $\preI$-modules of the following form.
	Consider the sequence $M\hookrightarrow P_x \twoheadrightarrow N$ where $M$ is a decorous submodule of $P_x$ and $N$ is a decorous quotient module of $P_{x}$.
	We assume the the composition of the arrows is nonzero and define the image of $M$ in $N$ to be a \emph{sheet module}.
	We call these sheet modules because, when we lift them to representations of $\DbAI$, they are  thin representations, though a sheet module need not be indecomposable. 
	Our interest in sheet modules is an entry point into understanding general $\Hom$ spaces between $\preI$-modules.
	We define a way of understanding certain morphisms between sheet modules by analyzing the image of a generator (Proposition~\ref{prop:elementary morphisms}).
	We define such morphisms to be \emph{elementary} and describe a way of combining such morphisms as \emph{multi-elementary} (Definition~\ref{def:elementary morphisms}).
	Then we have the following conjecture.
	
	\begin{conjecture}
		A morphism between sheet modules is a finite sum of multi-elementary morphisms.
	\end{conjecture}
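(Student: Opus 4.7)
The plan is to reduce an arbitrary morphism $f\colon S\to S'$ between sheet modules to the data of its values on a canonical, finite generating set of $S$, and then to express each such value as a finite combination of building blocks that match the patterns of Proposition~\ref{prop:elementary morphisms} and Definition~\ref{def:elementary morphisms}.

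First, I would read off a canonical finite generating set for any sheet module from the boundary-function description of Theorem~\ref{thm intro:decorous}. A sheet module $S$ arises from decorous data on the bounded closure $\overline{\II}=[0,1]$, and its generators correspond to the ``upward corners'' of the relevant boundary function. The $1$-Lipschitz condition $|\partial(y)-\partial(z)|\leq |y-z|$, together with the compatibility between the submodule boundary of $M$ and the quotient boundary of $N$ in the defining sequence $M\hookrightarrow P_x\twoheadrightarrow N$, should force only finitely many such corners to survive in the image. I would make this finiteness precise and then fix generators $g_1,\dots,g_n$ of $S$ and $g_1',\dots,g_m'$ of $S'$, along with the natural spanning set of $S'$ obtained by pushing the $g_j'$ along the two preprojective maps of $\preI$.

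Next, for $f\colon S\to S'$ I would expand each $f(g_i)$ in this spanning set of $S'$, obtaining finitely many scalar coefficients. By Proposition~\ref{prop:elementary morphisms}, a single such coefficient considered in isolation prescribes the image of one generator and thus determines an elementary morphism, provided the prescription is consistent with the relations imposed on $g_i$ inside $S$. When a single coefficient is not consistent on its own, one must combine it with coefficients appearing in the images of other generators; this bundling is exactly the mechanism of multi-elementary morphisms in Definition~\ref{def:elementary morphisms}. So my aim is to partition the coefficient data of $f$ into minimal consistent packets, one per multi-elementary summand.

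The main obstacle will be the combinatorics of organizing these coefficients into minimal consistent packets and proving that only finitely many are required. The relations in $\preI$ (the continuous analogue of the preprojective mesh relations) couple coefficients across different generators, and one must show that the resulting linear system over the finite generating sets has a finite basis of solutions, each of which is multi-elementary. Ruling out a genuinely convergent-but-infinite combination is, I expect, why this is currently stated as a conjecture rather than a theorem, and would require a careful compactness argument leveraging boundedness of $\II$ and the Lipschitz constraint on the boundary functions.
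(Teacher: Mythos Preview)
The statement you are attempting to prove is left as an open conjecture in the paper; there is no proof to compare against. The authors explicitly remark, just before stating it, that they know of no counterexample but also offer no argument.

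Your proposal has a genuine gap at the very first step. You assert that a sheet module has a \emph{finite} canonical generating set, arguing that the $1$-Lipschitz condition on the boundary functions forces only finitely many ``upward corners'' to survive. This is false. The function $\upbnd$ defining the top boundary of a decorous submodule is an arbitrary $1$-Lipschitz function on $\overline{\II}$, and such a function can have infinitely many local minima---the paper's own Figure~\ref{fig:submodules of projectives} (the example $\widetilde{M}_3$) depicts exactly this, with a sawtooth accumulating to a point. Taking $N=P_x$ in the definition of sheet module, the resulting sheet is $M$ itself, and its set $\gen(S)$ (Definition~\ref{def:gen S}) is infinite. So the reduction to finitely many coefficients collapses, and with it the rest of your argument.

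Note also that the paper's Definition~\ref{def:elementary morphisms}(2) of a multi-elementary morphism explicitly allows the indexing collection $\{(y_i,a_i)\}$ to be infinite. The actual obstruction the authors identify is not a compactness issue but the way the codependence classes $\codep_{y,a}$ change as the shift $a$ varies: two generators can be $a$-codependent for some $a$ and independent for others (Figure~\ref{fig:complicated sheets} and Definition~\ref{def:range of codependence}). A proof would have to control this shifting codependence structure, and your sketch does not engage with it.
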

	
	In Section~\ref{sec:bricks} we turn our attention to bricks.
	As a consequence of Proposition~\ref{prop:deep modules are not bricks}, we see that none of following can be a brick: submodules of representable projectives, decorous quotient modules of representable projectives, and sheet modules.
	We define a \emph{sawtooth function}, denoted $\sawtooth$, and the corresponding \emph{sawtooth module} (Definitions~\ref{def:sawtooth}~and~\ref{def:sawtooth module}).
	Given a sawtooth function $\sawtooth$, we define a functor from a continuous quiver $Q$ of type $A$ to $\preI$, denoted $Z:Q\to\preI$.
	The construction has no choices and $Q$ itself is actually induced by $\sawtooth$.
	We are particularly interested in the induced functor $Z^*:\Rep(\preI)\to\Rep(Q)$.
	In Proposition~\ref{prop:bricks and decorous}, we relate bricks back to decorous submodules.
	Just before that proposition we obtain a generalization of the result in \cite{A22} just after Theorem~0.4.
	\begin{thm}[Theorem~\ref{thm:bricks are sawtooth modules from continuous A quivers}]
		Let $M$ be a $\preI$-module.
		Then $M$ is a brick if and only if $M$ is simple or $M$ is a sawtooth module.
		
		Moreover, if $M$ is a sawtooth module then the module $Z^*M$ is a brick in $\Rep(Q)$, where $Q$ is induced by the sawtooth function $\sawtooth$ that defines $M$.
	\end{thm}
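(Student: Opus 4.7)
The plan is to prove the two directions of the biconditional separately, and to extract the ``moreover'' clause from the forward direction. As a preliminary observation, every simple $\preI$-module is automatically a brick, since its endomorphism ring is a division ring that reduces to $\Bbbk$ in our setting; it thus suffices to identify the non-simple bricks as exactly the sawtooth modules.

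For the direction ``$M$ sawtooth $\Rightarrow M$ brick,'' I would exploit the pullback functor $Z^*:\Rep(\preI)\to\Rep(Q)$. By the construction of $Z$ from $\sawtooth$, the continuous type $A$ quiver $Q$ records the alternating ascent/descent pattern of the sawtooth function, and the sawtooth module $M$ should pull back to the thin interval representation of $Q$ on $[a,b]$. Thin interval representations of continuous type $A$ quivers are classically bricks, being indecomposable with scalar endomorphism ring (see \cite{IRT23}). To transfer the brick property back to $\preI$, I would verify that $Z^*$ is faithful on $\End(M)$: the vector-space data of $M$ at each point of $\overline{\II}$ can be recovered from $Z^*M$ along the graph of $\sawtooth$, which reduces to the definition of a sawtooth module. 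The ``moreover'' clause is precisely the statement that $Z^*M$ is a brick in $\Rep(Q)$, so it falls out of this argument.

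For the harder direction ``$M$ brick $\Rightarrow M$ simple or sawtooth,'' I would begin with a non-simple brick $M$ and extract structural boundary data in the spirit of Theorem~\ref{thm intro:decorous}. The main lever is Proposition~\ref{prop:deep modules are not bricks}, which excludes submodules of representable projectives, decorous quotient modules of representable projectives, and sheet modules from being bricks. Since each of these families is governed by a boundary function with a ``mild'' slope profile, an indecomposable brick cannot live inside any of them; it must instead arise from a boundary pattern with extremal slopes $\pm 1$ punctuated by turning points, which is precisely the defining feature of a sawtooth module. I would make this rigorous by associating to $M$ its top-socle geometry, reading off a sawtooth function $\sawtooth$, and then identifying $M$ with the corresponding sawtooth module via the bijection of Theorem~\ref{thm intro:decorous}.

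The main obstacle is the second direction: translating the brick condition $\End(M)=\Bbbk$ into a pointwise constraint forcing the boundary profile to be extremal. The risk is that any flat stretch, or any interior slope strictly between $-1$ and $+1$, should allow the construction of a non-scalar endomorphism of $M$; turning this heuristic into a rigorous classification, and then recognizing $M$ up to isomorphism from the resulting boundary data, is where the most delicate combinatorial bookkeeping will be required.
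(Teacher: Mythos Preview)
There is a genuine gap, and it stems from a misreading of the statement. In the body of the paper the theorem reads: $M$ is a brick if and only if $M$ is simple \emph{or} $M$ is a sawtooth module \emph{and} $Z^*M$ is a brick. The ``moreover'' clause is part of the characterization, not a free-standing consequence of being sawtooth. The definition of a sawtooth module (Definition~\ref{def:sawtooth module}) constrains only \emph{which} path-like morphisms act nonzero; it says nothing about the pointwise dimensions $\dim M(x)$. In particular, the direct sum of two copies of a thin sawtooth module is again sawtooth, and $Z^*M$ is then a decomposable $Q$-representation, not a thin interval module. So your claim that $Z^*M$ is automatically ``the thin interval representation on $[a,b]$'' is false, and with it the direction ``sawtooth $\Rightarrow$ brick'' as you have set it up.

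The paper's actual mechanism is an \emph{isomorphism} $\End_{\Rep(\preI)}(M)\cong\End_{\Rep(Q)}(Z^*M)$ for any sawtooth module (Proposition~\ref{prop:sawtooth iso on endomorphism rings}), which transfers the brick property in both directions simultaneously; your faithfulness of $Z^*$ would give only the injection, which is the easy half. For ``brick $\Rightarrow$ sawtooth'' you correctly invoke Proposition~\ref{prop:deep modules are not bricks}, but the consequence you draw from it is off: the relevant content is that a brick is \emph{not deep}, i.e.\ no path-like endomorphism of positive length acts nonzero. This is a pointwise statement about $M$, not a slope constraint on a boundary function (a general $\preI$-module has no such boundary function). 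The paper then proves directly (Lemma~\ref{lem:indecomposable + no push up yields sawtooth}) that an indecomposable, non-deep module with non-singleton support must be sawtooth, by walking along $\supp(M)$ and observing that at each point exactly one of the two directions has a nontrivial action; this builds the sawtooth function inductively. Your ``top--socle geometry'' and ``Theorem~\ref{thm intro:decorous}'' route does not give this, since those results classify submodules of a fixed $P_x$, not arbitrary bricks.
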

	
	In Section~\ref{sec:permutons}, we begin generalizing the concept of permutation ideals. As mentioned above, the objects constituting our generalization of permutation ideals are indexed by a permuton. See Section \ref{sec:permutons} for the definition. There is an injection from permutations in $S_{n+1}$ into permutons, for any $n$; we write $\gamma_w$ for the permuton associated to the permutation $w$.

        From a permuton $\mu$, we define a collection of induced functions $(0,1)\to \RR$, one for each $x\in(0,1)$.
	Here we are using $\Lambda_{(0,1)}$ as a stand in for the equivalence class of $\preI$'s where $\II$ is open and bounded.
	In Lemma~\ref{lem:decorous from permuton}, we show that the functions induced by permutons are precisely the $\partial$ functions from Theorem~\ref{thm intro:decorous}.
	The immediate consequence is that each permuton $\mu$ defines a decorous submodule $D_\mu^a$ of $P_a$, for each $a\in (0,1)$.
	
	Just as the permutation ideals can be described as submodules of $\bigoplus_{i=1}^n P_i$, for a preprojective algebra of type $A_n$, our permuton ideals are submodules of $\bigoplus_{x\in(0,1)} P_x$ in $\Rep(\Lambda_{(0,1)})$.
	The permuton ideal is then $I_\mu =\bigoplus_{a\in\II} D_\mu^a$. 
	Consdering $\Lambda_{(0,1)}$ as a ring, we show that the ideal is two sided (Proposition~\ref{prop:permuton ideal is two sided}).
	
	In Section~\ref{recover}, we show how to recover the (picture of) the radical filtration $I^i_w$ of a permutation ideal $I_w$ as a summand of the permuton ideal corresponding to $\gamma_w$ (Theorem~\ref{finite-is-infinite}).
	We show there is a clear condition on permutons that determines when one permuton ideal contains another (Proposition~\ref{prop:nice Bruhat condition}). 
	We give a natural definition of the \emph{permuton Bruhat order}, and show that $\mu\geq \nu$ if and only if $I_\mu\subseteq I_\nu$ as modules.
	We also show that, given permutons $\gamma_u,\gamma_v$ from permutations $u,v$, respectively, we have $\gamma_u\leq \gamma_v$ in our permuton Bruhat order if and only if $u\leq v$ in the Bruhat order on permutations (Theorem~\ref{bruhat-restricts}).
	
	In Section~\ref{sec:permuaton rigidity} we extend the $\tau$-rigidity of permutation ideals to an analogous property for permuton ideals.
	For a permuton $\mu$ and $a\in (0,1)$, we have the decorous submodule $D_\mu^a$ of $P_a$.
	From Proposition~\ref{prop:decorous short exact sequence} we know that $N$, in the short exact sequence $D_\mu^a\hookrightarrow P_a \twoheadrightarrow N$, is decorous.
	We label this as $U_\mu^a$.
	In the context of preprojective algebras of type $A_n$, if $M$ is a submodule of $P_i$, we have a short exact sequence $M\hookrightarrow P_i\twoheadrightarrow \tau M$, for each projective $P_i$.
	Thus, in our context, it is natural to view $U_\mu^a$ as playing the role of $\tau D_\mu^a$, although we do not actually have an interpretation of Auslander--Reiten translation in our setting. 
	Our analogue of $\tau$-rigidity for a permuton ideal $I_\mu\subset \Lambda_{(0,1)}$ is the following.
	\begin{thm}[Theorem~\ref{cont-rigid}]
		Let $\mu$ be a permuton and $a,b\in(0,1)$.
		Then we have $\Hom(D_\mu^a,U_\mu^b)=0$.
	\end{thm}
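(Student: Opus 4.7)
The plan is to show the vanishing $\Hom(D_\mu^a,U_\mu^b)=0$ by a direct pointwise analysis, leveraging the description of decorous submodules via boundary functions (Theorem~\ref{thm intro:decorous}) together with the observation that $\partial_\mu^a$ and $\partial_\mu^b$ both arise from the \emph{same} permuton $\mu$ (Lemma~\ref{lem:decorous from permuton}), and the fact that $U_\mu^b$ is itself decorous (Proposition~\ref{prop:decorous short exact sequence}). The core idea is that the permuton-compatibility forces an incompatibility between where $D_\mu^a$ is supported (``below'' $\partial_\mu^a$ in the diamond picture of $P_a$) and where $U_\mu^b$ can receive nonzero elements (``above'' $\partial_\mu^b$ in the diamond picture of $P_b$).

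First, I would express a morphism $f\colon D_\mu^a\to U_\mu^b$ in coordinates: for each $x\in\II$ we have a linear map $f_x\colon D_\mu^a(x)\to U_\mu^b(x)$, where $D_\mu^a(x)\subseteq P_a(x)$ consists of elements ``below'' $\partial_\mu^a$ at $x$, and $U_\mu^b(x)=P_b(x)/D_\mu^b(x)$ is built out of representatives ``above'' $\partial_\mu^b$ at $x$. By the naturality of $f$ with respect to $\preI$-structure maps, $f$ is determined by its action on the generators of $D_\mu^a$ (supported at the locus where $\partial_\mu^a=0$). Suppose, for contradiction, that $f\neq 0$; then there is some $x\in\II$ and some generator $v\in D_\mu^a(x)$ with $f_x(v)\neq 0$ in $U_\mu^b(x)$, yielding a nonzero representative $\tilde v\in P_b(x)\setminus D_\mu^b(x)$.

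Next, I would use $\preI$-equivariance to pull $\tilde v$ back to a generator of $P_b$ at $b$ transported along structure maps, so that the depth of $\tilde v$ relative to $\partial_\mu^b$ can be computed directly from the positions of $a$, $b$, and $x$. The hypothesis $v\in D_\mu^a(x)$ encodes a depth-bound in terms of $\partial_\mu^a$, and the permuton structure — specifically the way $\partial_\mu^a$ and $\partial_\mu^b$ are extracted by integrating $\mu$ against rectangles anchored at $a$ and $b$ respectively — implies the corresponding depth-bound for $\tilde v$ relative to $\partial_\mu^b$. This puts $\tilde v$ into $D_\mu^b(x)$, giving $f_x(v)=0$ in $U_\mu^b(x)$, a contradiction; hence $f=0$.

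The main obstacle will be the precise articulation of how $\partial_\mu^a$ and $\partial_\mu^b$ transport into one another along $\preI$-structure maps, i.e., how the $1$-Lipschitz inequality $|\partial_\mu^a(y)-\partial_\mu^a(z)|\leq |y-z|$ combines with the analogous one for $\partial_\mu^b$ when both are induced by the same $\mu$. In the discrete $A_n$ case, this is the combinatorial compatibility between permutation ideals that underlies the classical $\tau$-rigidity; continuously, it should reduce to a direct mass-measure calculation on $\mu$, but carrying out that reduction cleanly — particularly without invoking the unproven conjecture on multi-elementary morphisms — is where the substantive technical work lies.
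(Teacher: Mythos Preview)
Your proposal has a genuine gap at the step ``the permuton structure \dots implies the corresponding depth-bound for $\tilde v$ relative to $\partial_\mu^b$.'' The element $\tilde v$ is the image of $v$ under an \emph{arbitrary} morphism $\phi\colon D_\mu^a\to U_\mu^b$; its depth in $P_b(x)$ is not a function of $a$, $b$, $x$, and $\mu$, but is chosen by $\phi$. There is no pointwise obstruction: at any single $x$, a linear map $D_\mu^a(x)\to U_\mu^b(x)$ can send a generator to any element you like. The constraint from $\preI$-equivariance is global, not local --- it says that once $\phi$ is nonzero somewhere, the nonvanishing must propagate to neighbouring points, and the contradiction only appears after following this propagation to where the supports of $D_\mu^a$ and $U_\mu^b$ separate. (A minor side issue: $\partial_\mu^a$ is nowhere zero, so ``generators at the locus $\partial_\mu^a=0$'' is not meaningful; and $D_\mu^a$ is generally not generated at any discrete set of points.)

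The paper's argument supplies exactly this propagation. It first isolates the one fact about permuton boundaries that matters: the difference $f_{\mu,a}(x)-f_{\mu,b}(x)=\pm 2\mu\big((\min(a,b),\max(a,b)]\times[0,x]\big)$ is weakly monotone in $x$. It then proves a general lemma: whenever $f-g$ is weakly monotone, $\Hom(D_f,U_g)=0$. The proof assumes a nonzero $\phi$, picks $(x_0,f(x_0))$ mapping to something nonzero of depth $y_0$ in $U_g$, and builds a zigzag $(x_0,y_0),(x_1,y_1),\dots$ by alternately following slope-$1$ segments down/left in $U_g$ until hitting $y=g(x)$, and slope-$(-1)$ segments up/left in $D_f$ until hitting $y=f(x)$. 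Equivariance forces $\phi$ to be nonzero all along this zigzag. Monotonicity of $f-g$ guarantees each step moves left by at least a fixed $\epsilon/2$, so in finitely many steps one reaches $x=0$, where $f(0)-g(0)\geq\epsilon>0$ forces the supports apart and $\phi$ to vanish --- a contradiction. Your sketch never engages with this propagation mechanism, and without it the argument cannot close.
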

	
	\subsection{Funding}
	JDR is supported by FWO grant 1298325N and was also partially supported by the FWO grants G0F5921N (Odysseus) and G023721N, and by the KU Leuven grant iBOF/23/064.
		HT is supported by  NSERC Discovery Grant RGPIN-2022-03960 and the Canada Research Chairs program, grant number CRC-2021-00120. 
	
	\section{Recollections on the preprojective algebra of type $A_n$}\label{sec:finite}
	In this section we recall some known facts and properties about preprojective algebras of type $A_n$.
	In Section~\ref{sec:finite:projectives}, we recall some useful ways of interpreting indecomposable projective representations and some projective resolutions.
	In Section~\ref{sec:finite:permutations}, we recall the link between certain ideals of a preprojective algebra of type $A_{n-1}$ and the Weyl group $\mathfrak{S}_{n}$.
	Then we relate this link to the Auslander--Reiten translation of a submodule of a projective indecomposable.

	\subsection{Projective resolutions}\label{sec:finite:projectives}
	Here we will consider preprojective algebras of type $A_n$ and some projective resolutions in a particular formulation that will be useful in Section~\ref{sec:category}.
	Specifically, the goal of this section is to help the reader understand the following perspective, which we state in the form of a proposition.
	
	\begin{proposition}\label{prop:finite lifting statement}
		Let $\Lambda$ be the preprojective (path) algebra of type $A_n$, for $n\geq 3$, and let $M$ be a representation of $\Lambda$ with projective presentation
		\[
			\xymatrix{
				\cdots \ar[r] & P_j \ar[r]^-d & P_i \ar[r] & M \ar[r] & 0,
			}
		\]
		where $P_i$ and $P_j$ are the indecomposable projectives at $i$ and $j$, respectively.
		Then we may lift these last two nonzero morphisms to the following exact diagram in $\repp(\mathcal{D}^b(A_n))$:
		\[
			\xymatrix{
				P_{b,j} \ar[r]^-{\tilde{d}} & P_{a,i} \ar[r] & \widetilde{M} \ar[r] & 0,
			}
		\]
		where $P_{a,i}$ and $P_{b,j}$ are $\Hom_{\repp(\mathcal{D}^b(A_5))}((a,i),-)$ and $\Hom_{\repp(\mathcal{D}^b(A_5))}((b,j),-)$, respectively.
		Moreover, $P_{a,i}$, $P_{b,j}$, $\tilde{d}$, and $\widetilde{M}$ are lifts of $P_i$, $P_j$, $d$, and $M$, respectively.
	\end{proposition}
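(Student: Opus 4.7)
The plan is to exploit the standard covering-theoretic description of the preprojective algebra $\Lambda$ of type $A_n$ as the push-down of the mesh category of the repetition quiver $\mathbb{Z}A_n$ (equivalently, of the AR-quiver of $\Db(A_n)$) under a $\mathbb{Z}$-action by translation. Write $\pi\colon \repp(\Db(A_n))\to \repp(\Lambda)$ for the resulting push-down functor, which is exact.

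First I would recall that, under $\pi$, each representable module $P_{(a,i)}:=\Hom_{\Db(A_n)}((a,i),-)$ at a vertex $(a,i)$ of the repetition quiver maps to the indecomposable projective $\Lambda$-module $P_i$, independently of the orbit representative $a$; a thin, diamond-shaped picture of $P_{(a,i)}$ is well known. Fixing $a$ thus produces a lift $P_{(a,i)}$ of $P_i$, and for any $b$ a lift $P_{(b,j)}$ of $P_j$.

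Next, I would invoke the standard Hom-space decomposition coming from covering theory,
\[
    \Hom_\Lambda(P_j,P_i)\;\cong\;\bigoplus_{b}\Hom_{\Db(A_n)}\bigl((b,j),(a,i)\bigr),
\]
with $(a,i)$ fixed and $b$ ranging over orbit representatives. The morphism $d\colon P_j\to P_i$ in the given presentation therefore decomposes along this sum; after adjusting $d$ within its isomorphism class of presentations I may assume $d$ is supported on a single summand, and let $\tilde d\colon P_{(b,j)}\to P_{(a,i)}$ be the corresponding arrow. Define $\widetilde M:=\coker(\tilde d)$ in $\repp(\Db(A_n))$, which yields the exact diagram asserted. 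Applying the exact functor $\pi$ and using that $\pi$ carries representables to indecomposable $\Lambda$-projectives recovers the original presentation $P_j\to P_i\to M\to 0$, so $\widetilde M$ is a lift of $M$ and $\tilde d$ a lift of $d$.

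The main obstacle, where the bookkeeping is most delicate, is the choice of summand: for a general $d$ the decomposition above may spread it over several $b$'s at once, in which case one should instead work with a finite direct sum $\bigoplus_s P_{(b_s,j)}$ as source and check that the cokernel still pushes down correctly. Since $\Db(A_n)$ is Hom-finite and, for $n\geq 3$, for any fixed $(a,i)$ only finitely many $(b,j)$ admit a nonzero morphism into $(a,i)$, this enlargement is harmless. Everything else is formal covering-theoretic bookkeeping, together with the explicit combinatorial description of the thin modules $P_{(a,i)}$ that makes the lifted picture match the familiar diamond shape of $P_i$.
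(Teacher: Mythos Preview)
Your covering-theory approach is correct and conceptually clean, but the paper takes a quite different route. Rather than proving the proposition formally, the paper works through a single explicit $A_5$ example (with $i=3$, $j=2$, and $d(\boldsymbol{1}_2)$ a pathlike morphism of length~$3$), constructs the lift by hand---using the \emph{length} of $d$ to determine the horizontal displacement $b-a$ in the AR-quiver of $\Db(A_5)$---and then remarks that ``nothing about this procedure used the `5-ness' of the~5 in $A_5$''. The paper also explicitly restricts at the end to the case where $d(\boldsymbol{1}_j)$ is a scalar multiple of a single path.

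Your packaging makes the underlying mechanism more transparent: the length grading on $\Hom_\Lambda(i,j)$ that the paper exploits \emph{is} precisely the orbit decomposition $\Hom_\Lambda(P_j,P_i)\cong\bigoplus_b\Hom_{\Db(A_n)}\bigl((a,i),(b,j)\bigr)$, so the two arguments are the same idea at different levels of abstraction. Your version has the advantage that exactness of the push-down does the bookkeeping for you; the paper's version has the advantage of producing the concrete ``diamond'' pictures that are reused later.

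One remark on your stated obstacle: the fallback to a direct sum $\bigoplus_s P_{(b_s,j)}$ would prove a different statement from the one asserted (which has a single $P_{b,j}$), and is in fact unnecessary. Any $d\colon P_j\to P_i$ can be written as $(\boldsymbol{1}+e)\,d_1$ where $d_1$ is the pathlike component of minimal length and $e$ lies in the radical of $\End(P_i)$; since $\boldsymbol{1}+e$ is invertible, $\coker(d)\cong\coker(d_1)$ and one may always take $d$ supported on a single summand. So your first instinct (``adjust within the isomorphism class of presentations'') was already enough.
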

	The indexing for the lifts can be seen in Figure~\ref{fig:bounded derived indexing} and the lift $\tilde{d}$ of the morphism $d$ is visualized in Figure~\ref{fig:finite lift}.
	\medskip
		
	Consider the Dynkin diagram $A_5$ and let $\Lambda$ be the preprojective algebra of type $A_5$.
	We have the quiver
	\[
		\xymatrix{
			1  \ar@<0.5ex>[r]^-{\alpha_1} &
			2  \ar@<0.5ex>[l]^-{\alpha_1^*} \ar@<0.5ex>[r]^-{\alpha_2} &
			3  \ar@<0.5ex>[l]^-{\alpha_2^*} \ar@<0.5ex>[r]^-{\alpha_3} &
			4  \ar@<0.5ex>[l]^-{\alpha_3^*} \ar@<0.5ex>[r]^-{\alpha_4} &
			5, \ar@<0.5ex>[l]^-{\alpha_4^*}
		}
	\]
	where $\alpha_i^*\alpha_i = \alpha_{i-1}\alpha_{i-1}^*$ for all $i\in\mathbb{Z}$ and we consider $\alpha_i=\alpha_i^*=0$ for $i\in\mathbb{Z}\setminus\{1,2,3,4\}$.
	\medskip
	
	We can think of $\Lambda$ as a category in the following way, which is in the same form as the description of our new categories in Section~\ref{sec:category}.
	Let $Q$ be the quiver above without the relation on $\alpha$'s and $\alpha^*$'s.
	Let $\mathcal{Q}$ be the category whose objects are $\{1,2,3,4,5\}$ and whose morphisms are given by $\Hom_{\mathcal{Q}}(i,j) = e_j(\Bbbk Q) e_i$.
	Then $\Bbbk$-linear functors $\mathcal{Q}$ to $\Bbbk$-vector spaces are exactly representations of $Q$ over $\Bbbk$.
	We then let $I$ be the ideal in $\mathcal{Q}$ generated by the relations $\alpha^*_i\alpha_i = \alpha_{i-1}\alpha^*_{i-1}$ as before, where we have $\alpha_i=\alpha^*_i=0$ if $i\notin\{1,2,3,4\}$, for the purposes of notation.
	We can think of $\mathcal{Q}$ as being the same as $\Bbbk Q$.
	Similarly, we think of $\mathcal{Q}/I$ as being the same as $\Lambda$.
	Overloading notation, we write $\Lambda$ for $\mathcal{Q}/I$.
%
	The dimension of $\Hom_{\Lambda}(i,j)$ can be seen in the following table:
	\begin{center}
	\begin{tabular}{l|c|c|c|c|l}
		${\downarrow}i\ j{\rightarrow}$ & 1 & 2 & 3 & 4 & 5 \\ \hline
		1 & 1 & 1 & 1 & 1 & 1 \\ \hline
		2 & 1 & 2 & 2 & 2 & 1 \\ \hline
		3 & 1 & 2 & 3 & 2 & 1 \\ \hline
		4 & 1 & 2 & 2 & 2 & 1 \\ \hline
		5 & 1 & 1 & 1 & 1 & 1. 
	\end{tabular}
	\end{center}

	\begin{definition}\label{def:path like}
		A composition of $\alpha$'s and $\alpha^*$'s is called \emph{path like} if it is nonzero. The \emph{length} of a pathlike morphism is the number of $\alpha$'s and $\alpha^*$'s appearing in it. 
	\end{definition}

Observe that $\Hom_\Lambda(i,j)$ has a canonical basis consisting of pathlike morphisms. 
        Consider, for example, $\Hom_{\Lambda}(2,2)$.
	This has a canonical basis of $\{\boldsymbol{1}_2,f\}$, where $f=\alpha^*_2\alpha_2=\alpha_1\alpha^*_1$. The morphism $\boldsymbol{1}_2$ has length 0 and the morphism $\alpha^*_2\alpha_2$ as has length 2.

        There is at most one morphism from $i$ to $j$ of length $k$, for any non-negative integer $k$. Thus, we describe $\Hom(i,j)$ as $\Bbbk^{\{\ell_1,\dots,\ell_r\}}$, where $\ell_1,\dots,\ell_r$ are the lengths of the pathlike morphisms from $i$ to $j$. 
	For example, $\Hom_{\Lambda}(2,2)\cong \Bbbk^{\{0,2\}}$.
	In particular, $\End_{\Lambda}(2)=\Hom_{\Lambda}(2,2)$ is a graded ring.
	
	Similarly, if we consider $\Hom_{\Lambda}(2,4)$, this has basis $\{\alpha_3\alpha_2,f\}$ where $f$ is the equivalence class of $\alpha^*_4\alpha_4\alpha_3\alpha_2$.
	In this case the respective lengths of the two basis elements are 2 and 4.
	So, $\Hom_{\Lambda}(2,4)\cong\Bbbk^{\{2,4\}}$.

	We consider our projective $P_i$ to be the functor $\Hom_{\Lambda}(i,-)$, which is precisely the projective representation at $i$.
	Then, $P_i(j)=\Hom_{\Lambda}(i,j)$.
	
	We consider a morphism $d:P_2\to P_3$ defined as follows.
	The morphism $d$ is entirely determined by the morphism $d(2):\Hom_{\Lambda}(2,2)\to\Hom_{\Lambda}(3,2)$ since $\boldsymbol{1}_2$ is the generator of $P_2$. 
	Using our descriptions above, we consider $P_3(2)=\Hom_{\Lambda}(3,2)\cong\Bbbk^{\{1,3\}}$, where the morphism in $\Hom_{\Lambda}(3,2)$ of length $3$ is the equivalence class of $\alpha^*_2 \alpha_2 \alpha^*_2$.
	Recall $\boldsymbol{1}_2\in P_2(2)=\End_{\Lambda}(2,2)$.
	Define $d(\boldsymbol{1}_2)=\alpha^*_2\alpha_2\alpha^*_2$.
	
	Now we consider the module $M$ whose projective presentation is
	\[
		\xymatrix{
			{\cdots} \ar[r] & P_2 \ar[r]^-{d} & P_3 \ar@{->>}[r] & M.
		}
	\]
%
	Then we can see $M$ is the following representation:
	\[
		\xymatrix@C=12ex{
			\Bbbk^{\{2\}} \ar@<1ex>[r]^-{\left[\begin{array}{c}0\end{array}\right]} &
			\Bbbk^{\{1\}} \ar@<1ex>[r]^-{\left[\begin{array}{cc}0 & 1\end{array}\right]} \ar@<1ex>[l]^-{\left[\begin{array}{c}1\end{array}\right]} &
			\Bbbk^{\{0,2\}} \ar@<1ex>[r]^-{\left[\begin{array}{cc} 1 & 0 \\ 0 & 1\end{array}\right]} \ar@<1ex>[l]^-{\left[\begin{array}{c} 1 \\ 0\end{array}\right]}&
			\Bbbk^{\{1,3\}} \ar@<1ex>[r]^-{\left[\begin{array}{c} 1 \\ 0 \end{array}\right]} \ar@<1ex>[l]^-{\left[\begin{array}{cc}0 & 0 \\ 1 & 0\end{array}\right]} &
			\Bbbk^{\{2\}}. \ar@<1ex>[l]^-{\left[\begin{array}{cc} 0 & 1 \end{array}\right]}
		}
	\]
	
	We can ``lift'' the morphism $d$ to $\repp(\mathcal{D}^b(A_5))$ in the following way.
	First, we choose a way of identifying vertices in the Auslander-Reiten quiver of $\mathcal{D}^b(A_5)$, which can be seen in Figure~\ref{fig:bounded derived indexing}.
	\begin{figure}
	\begin{center}
		\begin{displaymath}
		\begin{tikzpicture}
			\foreach \x in {0,1,2,3,4,5,6,7,8,9}
			{
				\foreach \y in {0,1,2}
				{
					\filldraw[fill=black] (\x,\y) circle[radius=.3mm];
				}
			}
			\foreach \x in {0,1,2,3,4,5,6,7,8}
			{
				\foreach \y in {0,1}
				{
					\draw[->] (\x + 0.1 , \y + 0.1) -- (\x + 0.4 , \y + 0.4);
				}
				\foreach \y in {1,2}
				{
					\draw[->] (\x + 0.1 , \y - 0.1) -- (\x + 0.4 , \y - 0.4);
				}
			}
			\foreach \x in {0.5,1.5,2.5,3.5,4.5,5.5,6.5,7.5,8.5}
			{
				\foreach \y in {0.5,1.5}
				{
					\filldraw[fill=black] (\x,\y) circle[radius=.3mm];
					\draw[->] (\x + 0.1 , \y + 0.1) -- (\x + 0.4 , \y + 0.4);
					\draw[->] (\x + 0.1 , \y - 0.1) -- (\x + 0.4 , \y - 0.4);
				}
			}
			\foreach \x in {-0.5,9.5}
			{
				\foreach \y in {0.5,1.5}
				{
					\draw (\x,\y) circle[radius=.3mm];
				}
			}
			\foreach \y in {0,1}
			{
				\draw[->, dotted] (9.1 , \y + 0.1) -- (9.4 , \y + 0.4);
			}
			\foreach \y in {1,2}
			{
				\draw[->, dotted] (9.1 , \y - 0.1) -- (9.4 , \y - 0.4);
			}
			\foreach \y in {0.5,1.5}
			{
				\draw[->, dotted] (-0.4 , \y + 0.1) -- (-0.1 , \y + 0.4);
				\draw[->, dotted] (-0.4 , \y - 0.1) -- (-0.1 , \y - 0.4);
			}
			\filldraw[fill opacity=.3, fill=red, draw opacity =0] (4.5,1.5) circle[radius=1.5mm];
			\filldraw[fill opacity=.3, fill=blue, draw opacity =0] (6,2) circle[radius=1.5mm];
			\draw (-0.5,-0.25) node[anchor=north] {$\cdots$};
			\draw (0,0) node[anchor=north] {-1};
			\draw (1,0) node[anchor=north] {1};
			\draw (2,0) node[anchor=north] {3};
			\draw (3,0) node[anchor=north] {5};
			\draw (4,0) node[anchor=north] {7};
			\draw (5,0) node[anchor=north] {9};	
			\draw (6,0) node[anchor=north] {11};	
			\draw (7,0) node[anchor=north] {13};	
			\draw (8,0) node[anchor=north] {15};	
			\draw (9,0) node[anchor=north] {17};	
			\draw (9.5,-0.25) node[anchor=north] {$\cdots$};
			\draw (0.5,-0.5) node[anchor=north] {0};
			\draw (1.5,-0.5) node[anchor=north] {2};
			\draw (2.5,-0.5) node[anchor=north] {4};
			\draw (3.5,-0.5) node[anchor=north] {6};
			\draw (4.5,-0.5) node[anchor=north] {8};
			\draw (5.5,-0.5) node[anchor=north] {10};
			\draw (6.5,-0.5) node[anchor=north] {12};
			\draw (7.5,-0.5) node[anchor=north] {14};
			\draw (8.5,-0.5) node[anchor=north] {16};
			\draw (-1,0) node[anchor=east] {1};
			\draw (-1,1) node[anchor=east] {3};
			\draw (-1,2) node[anchor=east] {5};
			\draw (-1.5,0.5) node[anchor=east] {2};
			\draw (-1.5,1.5) node[anchor=east] {4};
		\end{tikzpicture}
		\end{displaymath}
		\caption{In the diagram, the vertex highlighted in \textcolor{red}{red} has coordinates $(8,4	)$.
		The vertex highlighted in \textcolor{blue}{blue} has coordinates $(11,5)$.}\label{fig:bounded derived indexing}
	\end{center}
	\end{figure}
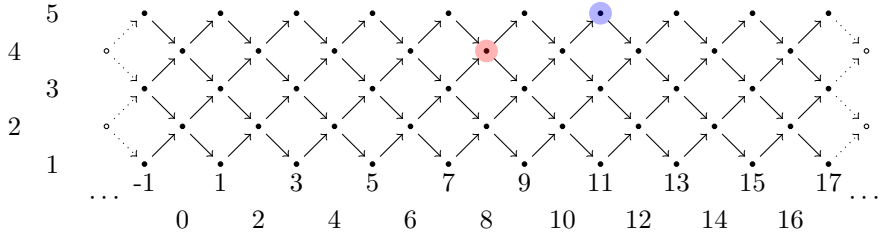
	
	Let $Q$ be the quiver $1\leftarrow 2\leftarrow 3\leftarrow 4\leftarrow 5$.
	One way to assign the numbering is to assign the coordinates $(i,i)$ to the vertex associated to $P_i[0]$ in $\mathcal{D}^b(Q)= \mathcal{D}^b(A_5)$.
	From here we use the fact that the Auslander-Reiten quiver of $\mathcal{D}^b(A_5)$ has vertices $\ZZ Q_0$.
	Assign $(i-2,j)$ to the vertex $\tau(i,j)$ and assign $(i+2,j)$ to the vertex $\tau^{-1}(i,j)$, where $\tau$ is the Auslander-Reiten translation. 

	We want to ``lift'' the sequence $P_2\to P_3\twoheadrightarrow M$ to a sequence of morphisms in $\mathcal{D}^b(A_5)$.
	This perspective will guide our intuition in Sections \ref{sec:category}, \ref{sec:subs and quots of projs}, \ref{sec:sheets}, and \ref{sec:bricks}.
	Since $P_3$ is the projective cover of $M$ in $\repp(\Lambda)$, we ``lift'' it to $P_{i,3}$ in $\repp(\mathcal{D}^b(A_5))$, for some odd $i\in\mathbb{Z}$.
	To make things easy, we pick $i=1$.
	Since $d(\boldsymbol{1}_2)=\alpha^*_2\alpha_2\alpha^*_2$ has length 3, we ``lift'' $P_2$ to $P_{i+3,2}$, which is $P_{4,2}$.
	Then we look at the unique nonzero map $\tilde{d}:\textcolor{blue}{P_{4,2}}\to \textcolor{red}{P_{1,3}}$ in $\repp(\mathcal{D}^b(A_5))$.
	The lifting can be seen in Figure~\ref{fig:finite lift}
	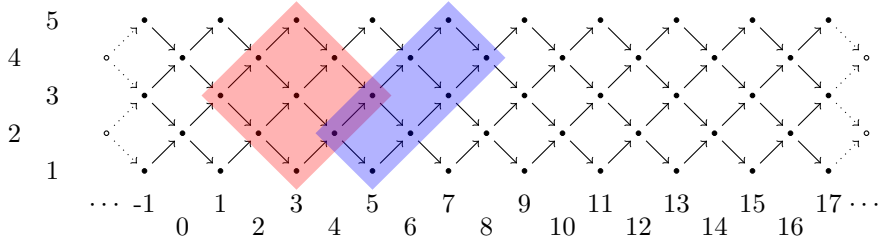
\begin{figure}
	\begin{center}
		\begin{displaymath}
		\begin{tikzpicture}
			\foreach \x in {0,1,2,3,4,5,6,7,8,9}
			{
				\foreach \y in {0,1,2}
				{
					\filldraw[fill=black] (\x,\y) circle[radius=.3mm];
				}
			}
			\foreach \x in {0,1,2,3,4,5,6,7,8}
			{
				\foreach \y in {0,1}
				{
					\draw[->] (\x + 0.1 , \y + 0.1) -- (\x + 0.4 , \y + 0.4);
				}
				\foreach \y in {1,2}
				{
					\draw[->] (\x + 0.1 , \y - 0.1) -- (\x + 0.4 , \y - 0.4);
				}
			}
			\foreach \x in {0.5,1.5,2.5,3.5,4.5,5.5,6.5,7.5,8.5}
			{
				\foreach \y in {0.5,1.5}
				{
					\filldraw[fill=black] (\x,\y) circle[radius=.3mm];
					\draw[->] (\x + 0.1 , \y + 0.1) -- (\x + 0.4 , \y + 0.4);
					\draw[->] (\x + 0.1 , \y - 0.1) -- (\x + 0.4 , \y - 0.4);
				}
			}
			\foreach \x in {-0.5,9.5}
			{
				\foreach \y in {0.5,1.5}
				{
					\draw (\x,\y) circle[radius=.3mm];
				}
			}
			\foreach \y in {0,1}
			{
				\draw[->, dotted] (9.1 , \y + 0.1) -- (9.4 , \y + 0.4);
			}
			\foreach \y in {1,2}
			{
				\draw[->, dotted] (9.1 , \y - 0.1) -- (9.4 , \y - 0.4);
			}
			\foreach \y in {0.5,1.5}
			{
				\draw[->, dotted] (-0.4 , \y + 0.1) -- (-0.1 , \y + 0.4);
				\draw[->, dotted] (-0.4 , \y - 0.1) -- (-0.1 , \y - 0.4);
			}
			\filldraw[fill opacity = .3, draw opacity = 0, fill = red] (.75,1) -- (2,2.25) -- (3.25,1) -- (2,-0.25) -- cycle;
			\filldraw[fill opacity = .3, draw opacity = 0, fill = blue] (2.25,0.5) -- (3,-0.25) -- (4.75,1.5) -- (4,2.25) -- cycle;
			\draw (-0.5,-0.25) node[anchor=north] {$\cdots$};
			\draw (0,-.2) node[anchor=north] {-1};
			\draw (1,-.2) node[anchor=north] {1};
			\draw (2,-.2) node[anchor=north] {3};
			\draw (3,-.2) node[anchor=north] {5};
			\draw (4,-.2) node[anchor=north] {7};
			\draw (5,-.2) node[anchor=north] {9};	
			\draw (6,-.2) node[anchor=north] {11};	
			\draw (7,-.2) node[anchor=north] {13};	
			\draw (8,-.2) node[anchor=north] {15};	
			\draw (9,-.2) node[anchor=north] {17};	
			\draw (9.5,-0.25) node[anchor=north] {$\cdots$};
			\draw (0.5,-0.5) node[anchor=north] {0};
			\draw (1.5,-0.5) node[anchor=north] {2};
			\draw (2.5,-0.5) node[anchor=north] {4};
			\draw (3.5,-0.5) node[anchor=north] {6};
			\draw (4.5,-0.5) node[anchor=north] {8};
			\draw (5.5,-0.5) node[anchor=north] {10};
			\draw (6.5,-0.5) node[anchor=north] {12};
			\draw (7.5,-0.5) node[anchor=north] {14};
			\draw (8.5,-0.5) node[anchor=north] {16};
			\draw (-1,0) node[anchor=east] {1};
			\draw (-1,1) node[anchor=east] {3};
			\draw (-1,2) node[anchor=east] {5};
			\draw (-1.5,0.5) node[anchor=east] {2};
			\draw (-1.5,1.5) node[anchor=east] {4};whose
		\end{tikzpicture}
		\end{displaymath}
		\caption{In red, the support of $P_{1,3}$ and, in blue, the support of $P_{4,2}$. These are lifts of $P_3$ and $P_2$, respectively, such that the image of the morphism of $\mathcal{D}^b(A_5)$-modules $P_{4,2}\to P_{1,3}$ coincides with a lift of $M\cong P_3/\mathrm{im}(d)$.}\label{fig:finite lift}
	\end{center}
	\end{figure}
	
	Set $\widetilde{M}:=\coker(\tilde{d})$.
	The support of $\widetilde{M}$ is given by the vertices in the red region that are not also in the blue region.
	That is, the support of $\widetilde{M}$ is the support of $P_{1,3}$ without $\{(4,2),(5,3)\}$.
	The representation $\widetilde{M}$ can be ``pushed down'' to $M$ by the following assignment.
	We write $\Bbbk_{i,j}$ to mean the copy of $\Bbbk$ at $(i,j)$ in the support of $\widetilde{M}$.
	\begin{align*}
		1\in\Bbbk_{3,1} &\mapsto 1 \in M(1) &
		1\in\Bbbk_{2,2} &\mapsto 1 \in M(2) \\
		1\in\Bbbk_{1,3} &\mapsto (1,0)\in M(3) &
		1\in\Bbbk_{3,3} &\mapsto (0,1)\in M(3) \\
		1\in\Bbbk_{2,4} &\mapsto (1,0)\in M(4) &
		1\in\Bbbk_{4,4} &\mapsto (0,1)\in M(4) \\
		1\in\Bbbk_{3,5} &\mapsto 1\in M(5)
	\end{align*}	
	
	Nothing about this procedure used the ``5-ness'' of the 5 in $A_5$.
	In particular, this technique works for such resolutions for any module $M$ of a preprojective algebra $\Lambda_n$ of type $A_n$ whose projective presentation is of the form
	\[
		\xymatrix{
			{\cdots} \ar[r] & P_j \ar[r]^-d & P_i \ar@{->>}[r] & M,
		}
	\]
	when $d(\boldsymbol{1}_j)$ is (a scalar multiple of) a path in $P_i(j)=\Hom_{\Lambda_n}(i,j)$.
	We will later generalize this technique to our continuous preprojective categories in Sections~\ref{sec:subs and quots of projs}~and~\ref{sec:sheets}.

	\subsection{Permutation ideals}\label{permutation-ideals}\label{sec:finite:permutations}
	
	There is a class of ideals of a preprojective algebra which is in bijection with the elements of the corresponding Weyl group. See \cite{Miz} for the simply-laced Dynkin case, which is the case relevant here. We will describe this correspondence in the case that $\Lambda$ is the preprojective algebra of type $A_{n-1}$. In this case, the corresponding Weyl group is $\S_n$, the group of permutations of $\{1,\dots,n\}$. We call the ideals corresponding to permutations, \emph{permutation ideals}.
	In Section~\ref{sec:permutons}, we will define an analogous collection of ideals in the continuous case. 
	
	For $1\leq i \leq n-1$, consider a non-zero map from $\Lambda$ to $S_i$, and let $I_i$ be its kernel. $I_i$ is a two-sided ideal.
		Also, for $1\leq i\leq n-1$, let $s_i=(i\ i+1)\in \S_n$ be the transposition of $i$ and $i+1$. We refer to the elements $s_i$ as \emph{adjacent transpositions}.
	For $w\in \S_n$, a \emph{reduced expression} for $w$ is an expression for $w$ as a product of adjacent transpositions of minimum possible length. We write $\ell(w)$ for this minimum possible length.

	\begin{theorem}[{\cite[Theorem 2.14]{Miz}}]\label{mizuno-thm} There is an injection from $W$ to two-sided ideals of $\Lambda$, sending $w$ to $I_w$, where $I_w$ is defined by taking a reduced expression for $w$, say $w=s_{i_1}\dots s_{i_r}$, and letting $I_w=I_{i_1}\dots I_{i_r}$. \end{theorem}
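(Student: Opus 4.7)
The statement has two substantive claims: first, that the ideal $I_w$ is well defined (independent of the choice of reduced expression); second, that $w \mapsto I_w$ is injective. The plan is to address these in turn.

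For well-definedness, I would invoke Matsumoto's theorem: any two reduced expressions for $w$ are related by a sequence of braid moves. It therefore suffices to verify the corresponding identities at the level of ideals, namely
\begin{equation*}
I_i I_j = I_j I_i \text{ for } |i-j|\geq 2, \qquad I_i I_{i+1} I_i = I_{i+1} I_i I_{i+1}.
\end{equation*}
The commutation relation is a direct check: for $|i-j|\geq 2$ the vertices $i$ and $j$ are non-adjacent in $A_{n-1}$, so the pathlike generators of $I_i$ and $I_j$ that fail to commute live in disjoint parts of the quiver, and the two products coincide. The braid relation is the key computation. I would approach it by writing out an explicit generating set for $I_i I_{i+1} I_i$ in terms of pathlike elements, and symmetrically for $I_{i+1} I_i I_{i+1}$, then matching them using the preprojective relation $\alpha_k^*\alpha_k = \alpha_{k-1}\alpha_{k-1}^*$. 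A cleaner alternative is to identify $I_w$ intrinsically as the annihilator of a suitable torsion class (or as a support $\tau$-tilting ideal), bypassing the need to argue at the level of chosen factorizations.

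For injectivity, I would proceed by induction on $\ell(w)$, using the length formula $\dim_{\Bbbk}\Lambda/I_w = \ell(w)$ together with the finer information that the composition multiplicities of $\Lambda/I_w$ record the inversion set of $w$. Concretely, whenever $\ell(s_i w) > \ell(w)$, the product $I_i I_w$ is a strict sub-ideal of $I_w$, and the quotient contributes to the $S_i$-multiplicity in $\Lambda/I_{s_iw}$. Tracking these contributions inductively allows one to recover $w$ from $I_w$; the base case $w = e$ with $I_e = \Lambda$ is immediate.

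The principal obstacle is the braid identity $I_i I_{i+1} I_i = I_{i+1} I_i I_{i+1}$. Conceptually it reflects the fact that both sides detect the same set of reflections, but as an equality of two-sided ideals in $\Lambda$ it requires an explicit argument. Of the two strategies above, the intrinsic characterization (as in \cite{Miz}) is the cleaner path and avoids the combinatorially intricate path manipulations, which grow unwieldy as soon as one writes out generators involving repeated applications of the preprojective relation.
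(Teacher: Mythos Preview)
The paper does not prove this theorem: it is quoted from Mizuno's paper \cite[Theorem 2.14]{Miz} and stated without proof, serving as background for the discussion of permutation ideals in Section~\ref{sec:finite:permutations}. There is therefore no proof in the paper to compare your proposal against.

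Your outline is a reasonable sketch of how one could approach the result directly. The strategy of reducing well-definedness to the braid and commutation identities via Matsumoto's theorem is standard, and you correctly flag the braid relation $I_iI_{i+1}I_i = I_{i+1}I_iI_{i+1}$ as the nontrivial step. That said, this identity is not a computation one should expect to carry out in a couple of lines of pathlike manipulations; in the literature it is typically established either via the 2-Calabi--Yau tilting theory of preprojective algebras (Iyama--Reiten, Buan--Iyama--Reiten--Scott) or, as in Mizuno's paper, by embedding the whole picture in the support $\tau$-tilting framework and using mutation. Your second suggested route---characterizing $I_w$ intrinsically and bypassing the factorization---is closer to what Mizuno actually does. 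Your proposed injectivity argument via $\dim_\Bbbk \Lambda/I_w = \ell(w)$ is also along the right lines, though this dimension formula itself requires proof; Mizuno instead obtains injectivity as part of a bijection with support $\tau$-tilting modules, which are in turn counted by $|W|$.
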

	
	In particular, the above definition of $I_w$ does not depend on the choice of reduced expression for $w$. We now give a more algorithmic approach to describing $I_w$. 
	
	\begin{lemma}\label{algo} Suppose $\ell(ws_i)=\ell(w)+1$.
	Then  $I_wI_{s_i}$ is obtained from $I_w$ by removing the copies of $S_i$ in the top of $I_w$. \end{lemma}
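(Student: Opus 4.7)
The plan is to give an intrinsic characterization of $I_w I_{s_i}$ inside $I_w$ and then read off the answer by a universal-property argument. The whole proof hinges on one elementary observation: since $I_i$ is by definition the kernel of a nonzero $\Lambda$-module surjection $\Lambda \twoheadrightarrow S_i$ and $\dim_{\Bbbk} S_i = 1$, the two-sided ideal $I_i$ has codimension one, so $\Lambda/I_i \cong \Bbbk$ as an algebra. Consequently, for any right $\Lambda$-module $M$, the quotient $M/MI_i$ is naturally a $\Bbbk$-vector space and, viewed back as a $\Lambda$-module, is a direct sum of copies of $S_i$.

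Let $J_w \subseteq I_w$ denote the kernel of the projection of $I_w$ onto the $S_i$-isotypic part of its top $I_w/\mathrm{rad}(I_w)$; by construction $I_w/J_w \cong S_i^{m}$, where $m$ is the multiplicity of $S_i$ in the top of $I_w$, and $J_w$ is precisely $I_w$ with the copies of $S_i$ in the top removed. The goal then becomes to prove $I_w I_i = J_w$.

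For the inclusion $I_w I_i \subseteq J_w$, I would argue directly: for any $\Lambda$-module map $\phi : I_w \to S_i$ and any $x \in I_w$, $y \in I_i$, one has $\phi(xy) = \phi(x)y = 0$ since $I_i$ acts as zero on $S_i \cong \Lambda/I_i$. Thus $I_w I_i$ lies in the kernel of every such $\phi$, hence in $J_w$ (which is the intersection of these kernels). For the reverse inclusion, I would run a multiplicity count: by the key observation, $I_w/I_w I_i \cong S_i^{k}$ for some $k$; since any surjection onto a semisimple module factors through the top, and the target here is $S_i$-isotypic, we get $k \leq m$; and the previous inclusion produces a surjection $S_i^{k} \cong I_w/I_w I_i \twoheadrightarrow I_w/J_w \cong S_i^{m}$, so $k \geq m$. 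Hence $k = m$, the natural surjection $I_w/I_w I_i \twoheadrightarrow I_w/J_w$ is an isomorphism of finite-dimensional modules, and $I_w I_i = J_w$.

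The hypothesis $\ell(ws_i) = \ell(w)+1$ does not explicitly enter the argument; it serves to make the statement non-vacuous, since when $\ell(ws_i) = \ell(w)-1$ one writes $I_w = I_{ws_i} I_i$ via Theorem~\ref{mizuno-thm} and then the case just established implies $I_w$ has no $S_i$ summand in its top, making the ``removal'' operation trivial. The main obstacle is not a deep one: it is mostly a matter of fixing a consistent side convention (e.g.\ right modules throughout) so that ``the top of $I_w$'' and ``multiplication by $I_i$'' refer to compatible $\Lambda$-actions, and of being explicit that $\Lambda/I_i \cong \Bbbk$ forces the relevant quotient modules to be $S_i$-isotypic.
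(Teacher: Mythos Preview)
Your proof is correct. The underlying idea is the same as the paper's: both arguments compute the quotient $I_w/I_w I_i$ and identify it with the $S_i$-isotypic part of the top of $I_w$. The paper reaches this by tensoring the short exact sequence $0\to I_i\to\Lambda\to S_i\to 0$ with $I_w$, so that right-exactness gives $I_w/I_wI_i\cong I_w\otimes_\Lambda S_i$, and then asserts that $I_w\otimes_\Lambda S_i$ is the $S_i$-part of the top. You instead observe directly that $\Lambda/I_i\cong\Bbbk$, hence $I_w/I_wI_i$ is automatically $S_i$-isotypic, and then pin it down by the two inequalities $k\le m$ (factoring through the top) and $k\ge m$ (the inclusion $I_wI_i\subseteq J_w$). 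Your route is a touch more elementary and makes the identification with the $S_i$-top completely explicit, whereas the paper's tensor formulation is more compact but leaves that identification as a standard fact. Your observation that the length hypothesis is not used in the argument itself is also accurate; the paper's proof does not invoke it either.
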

	
	\begin{proof}
	  Consider the short exact sequence $0\rightarrow I_{i} \rightarrow \Lambda \rightarrow S_i \rightarrow 0$, and tensor with $I_w$, obtaining the following exact sequence:
	  $$ I_{w}\otimes_\Lambda I_{i} \rightarrow I_w \otimes_\Lambda \Lambda \rightarrow I_w\otimes_\Lambda S_i \rightarrow 0$$
	Now  $I_w\otimes_\Lambda S_i$ consists of the copies of $S_i$ in the top of $I_w$. The image of $I_w\otimes_\Lambda I_{i}$ inside $I_w\otimes \Lambda \simeq I_w \subset \Lambda$ is $I_wI_i=I_{ws_i}$. Then, exactness at $I_w\otimes_\Lambda \Lambda$ proves the lemma.
	\end{proof}
	
	We write $P_i$ for the indecomposable projective associated to vertex $i$. We write $(I_w)^i$ for the summand of $I_w$ contained in $P_i$.

	We will now look at some examples of the radical filtration of $I_w$. For reasons that will become evident later, we want to be more specific than usual about exactly how we are drawing the radical filtrations. We coordinatize the plane of the page so that $x$ records the horizontal position, increasing from left to right, and $y$ records the vertical position, increasing downwards.  Each simple $S_i$ is drawn in a square shape (rotated 45 degrees with respect to the $(x,y)$ coordinates), so that each square for $S_i$ is centered at $x=i/n$. (For a technical reason which will become clear in Section \ref{recover}, we also remove the lower boundary of each square.) This has the effect that each summand is drawn between $x=0$ and $x=1$. For greater legibility, we do not draw them superimposed.
	
	\begin{example} \label{radical-example} Let us consider some examples for the preprojective algebra of type $A_4$. \begin{enumerate} \item
	Here is the radical filtration of the algebra considered as a right module over itself. It has four indecomposable summands.

	$$\begin{tikzpicture}[scale=1.5]
	  \draw (0,4/5)--(1/5,1)--(1,1/5)--(4/5,0)--(0,4/5);
	  \draw (1/5,3/5)--(2/5,4/5);
	  \draw (2/5,2/5)--(3/5,3/5);
	  \draw (3/5,1/5)--(4/5,2/5);
	  \node (a) at (1/5,4/5) {1};
	  \node (b) at (2/5,3/5) {2};
	  \node (c) at (3/5,2/5) {3};
	  \node (d) at (4/5,1/5) {4};
	\end{tikzpicture}
	\qquad
	\begin{tikzpicture}[scale=1.5]
	  \draw (0,3/5)--(2/5,1)--(1,2/5)--(3/5,0)--(0,3/5);
	  \draw (1/5,4/5)--(4/5,1/5);
	  \draw (3/5,4/5)--(1/5,2/5);
	  \draw (4/5,3/5)--(2/5,1/5);
	  \node (a) at (1/5,3/5) {1};
	  \node (b) at (2/5,4/5) {2};
	  \node (c) at (2/5,2/5) {2};
	  \node (d) at (3/5,1/5) {3};
	  \node (e) at (3/5,3/5) {3};
	  \node (f) at (4/5,2/5) {4};
	\end{tikzpicture}
	\qquad
	\begin{tikzpicture}[xscale=-1.5,yscale=1.5]
	  \draw (0,3/5)--(2/5,1)--(1,2/5)--(3/5,0)--(0,3/5);
	  \draw (1/5,4/5)--(4/5,1/5);
	  \draw (3/5,4/5)--(1/5,2/5);
	  \draw (4/5,3/5)--(2/5,1/5);
	  \node (a) at (1/5,3/5) {4};
	  \node (b) at (2/5,4/5) {3};
	  \node (c) at (2/5,2/5) {3};
	  \node (d) at (3/5,1/5) {2};
	  \node (e) at (3/5,3/5) {2};
	  \node (f) at (4/5,2/5) {1};
	\end{tikzpicture}
	\qquad
	\begin{tikzpicture}[xscale=-1.5,yscale=1.5]
	  \draw (0,4/5)--(1/5,1)--(1,1/5)--(4/5,0)--(0,4/5);
	  \draw (1/5,3/5)--(2/5,4/5);
	  \draw (2/5,2/5)--(3/5,3/5);
	  \draw (3/5,1/5)--(4/5,2/5);
	  \node (a) at (1/5,4/5) {4};
	  \node (b) at (2/5,3/5) {3};
	  \node (c) at (3/5,2/5) {2};
	  \node (d) at (4/5,1/5) {1};
	\end{tikzpicture}$$
	
	\item
	The radical filtration of $I_i$ is the same as that of $\Lambda$, except that it is missing the $S_i$ in the top of $P_i$.
	
	\item 
	Let us now consider a more generic example, such as $w=25341$. One possible reduced expression for $w$ is $(12)(23)(45)(34)(23)(45)$. Using this reduced expression, and applying the algorithm in Lemma \ref{algo}, we successively remove, from the top, all copies of $S_1, S_2, S_4, S_3, S_2, S_4$. A different reduced expression for $w$, such at $(12)(23)(34)(45)(34)(23)$, would remove the same simples in total, but in a different order. 
	The resulting radical filtration of the remaining ideal is show below (with the entire $\Lambda_\Lambda$ still visible in grey).
	
	$$\begin{tikzpicture}[scale=1.5]
	  
	  \draw[gray] (0,4/5)--(1/5,1)--(1,1/5)--(4/5,0)--(0,4/5);
	  \draw[gray] (1/5,3/5)--(2/5,4/5);
	  \draw[gray] (2/5,2/5)--(3/5,3/5);
	  \draw[gray] (3/5,1/5)--(4/5,2/5);
	  \node[gray] (a) at (1/5,4/5) {1};
	  \node[gray] (b) at (2/5,3/5) {2};
	  \node[gray] (c) at (3/5,2/5) {3};
	  \node[gray] (d) at (4/5,1/5) {4};
	  \draw[ultra thick] (0,4/5)--(4/5,0)--(1,1/5);
	\end{tikzpicture}
	\qquad
	\begin{tikzpicture}[scale=1.5]
	  \draw[fill=blue, fill opacity=.25] (4/5,1/5)--(3/5,0)--(0,3/5)--(1/5,4/5)--(4/5,1/5);
	  \draw[gray] (1/5,4/5) -- (2/5,1)-- (1,2/5)--(4/5,1/5); 
	  \draw[gray] (3/5,4/5)--(2/5,3/5);
	  \draw (2/5,3/5)--(1/5,2/5);
	  \draw[gray] (4/5,3/5)--(3/5,2/5);
	  \draw (3/5,2/5)--(2/5,1/5);
	  \node (a) at (1/5,3/5) {1};
	  \node[gray] (b) at (2/5,4/5) {2};
	  \node (c) at (2/5,2/5) {2};
	  \node (d) at (3/5,1/5) {3};
	  \node[gray] (e) at (3/5,3/5) {3};
	  \node[gray] (f) at (4/5,2/5) {4};
	  \draw[ultra thick] (0,3/5)--(1/5,4/5)--(4/5,1/5)--(1,2/5);
	\end{tikzpicture}
	\qquad
	\begin{tikzpicture}[xscale=1.5,yscale=1.5]
	  \draw [fill=blue, fill opacity=.25] (0,2/5)--(1/5,3/5) -- (2/5,2/5)--(3/5,3/5)--(4/5,2/5)--(2/5,0)--(0,2/5);
	  \draw (1/5,1/5)--(2/5,2/5)--(3/5,1/5);
	  \draw [ultra thick] (0,2/5)--(1/5,3/5) -- (2/5,2/5)--(3/5,3/5)--(4/5,2/5)--(1,3/5);
	  \draw[gray] (1/5,3/5)--(3/5,1)--(1,3/5);
	  \draw [gray] (2/5,4/5)--(3/5,3/5)--(4/5,4/5);
	  \node[gray] (a) at (4/5,3/5) {4};
	  \node[gray] (b) at (3/5,4/5) {3};
	  \node (c) at (3/5,2/5) {3};
	  \node (d) at (2/5,1/5) {2};
	  \node[gray] (e) at (2/5,3/5) {2};
	  \node (f) at (1/5,2/5) {1};
	\end{tikzpicture}
	\qquad
	\begin{tikzpicture}[xscale=-1.5,yscale=1.5]
	  \draw[gray] (0,4/5)--(1/5,1)--(1,1/5)--(4/5,0)--(0,4/5);
	  \draw[gray] (1/5,3/5)--(2/5,4/5);
	  \draw[gray] (2/5,2/5)--(3/5,3/5);
	  \draw[gray] (3/5,1/5)--(4/5,2/5);
	  \node[gray] (a) at (1/5,4/5) {4};
	  \node[gray] (b) at (2/5,3/5) {3};
	  \node[gray] (c) at (3/5,2/5) {2};
	  \node (d) at (4/5,1/5) {1};
	  \draw[fill=blue, fill opacity=.25] (1,1/5)--(4/5,2/5)--(3/5,1/5)--(4/5,0)--(1,1/5);
	  \draw [ultra thick] (0,4/5)--(3/5,1/5)--(4/5,2/5)--(1,1/5);
	\end{tikzpicture}
	$$
	In this example, $(I_w)^1$ is actually zero, because all four composition factors of $P_1$ are removed. 
	\end{enumerate} \end{example}
	
	Notice that $(I_w)^i$ can be encoded by the piecewise linear curve that separates the composition factors of $P_i$ which are in the ideal from those which are not. We always draw this curve so that it stretches all the way from $x=0$ to $x=1$, passing immediately under the bottommost composition factors of $P_i$ which are not in the $(I_w)^i$. So as to make this curve uniquely defined as a function, we fix our coordinates so that the indecomposable projectives have the top vertex of the corresponding rectangle at $y=0$. It follows that the bottom of the rectangle is at $y=1$.
	
	\begin{example} Continuing Example \ref{radical-example}(3), 
	the function defining $(I_w)^1$ is
	$f_1(x)=1-|x-4/5|$. The function defining $(I_w)^2$ is $2/5-x$ for $0\leq x\leq 1/5$, it is $x$ for $1/5\leq x\leq 4/5$, and $8/5-x$ for $4/5\leq x\leq 1$.  
	\end{example}
	
	\subsection{Auslander--Reiten translation} 
	
	The Auslander--Reiten translation of a module $M$, generally denoted $\tau M$,
	is calculated as follows. Take a minimal projective presentation of $M$, say
	$$Q\xrightarrow{g} P \rightarrow M \rightarrow 0.$$
	Then $\tau M$ is by definition the kernel of $\nu g$, where $\nu$ is the Nakayama functor sending $P_i$ to the corresponding injective. See \cite[Chapter IV]{ASS} for more details.
	
	We will need the following lemma:
	
	\begin{lemma}\label{discrete-tau} Let $M$ be a submodule of the indecomposable projective module $P_i$. Then $\tau M \simeq P_i/M$. \end{lemma}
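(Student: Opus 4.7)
The plan is to exploit the fact that the preprojective algebra $\Lambda$ of Dynkin type $A_n$ is self-injective and that its stable module category $\underline{\mathrm{mod}}(\Lambda)$ is a $2$-Calabi-Yau triangulated category (see e.g.\ Crawley-Boevey, or Geiss-Leclerc-Schröer). For a $2$-Calabi-Yau triangulated category the Serre functor is $S = \Sigma^2$, and combined with the identification $S = \Sigma \tau$ coming from the Auslander-Reiten formula in the self-injective setting, this yields $\tau \cong \Sigma = \Omega^{-1}$ on the stable module category. The lemma therefore reduces to recognizing $P_i/M$ as $\Omega^{-1} M$, which will follow directly from the inclusion $M \hookrightarrow P_i$.

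After handling the trivial edge cases $M = 0$ and $M = P_i$, I would treat the main case $0 \subsetneq M \subsetneq P_i$. Since $P_i$ is local with simple top $S_i$, every proper submodule $M$ lies in $\mathrm{rad}(P_i)$. Hence $P_i$ is the projective cover of $P_i/M$, and the short exact sequence
\[
0 \to M \to P_i \to P_i/M \to 0
\]
exhibits $M$ as a first syzygy of $P_i/M$, so $P_i/M \cong \Omega^{-1} M$ in $\underline{\mathrm{mod}}(\Lambda)$. Combined with $\tau \cong \Omega^{-1}$, this gives $\tau M \cong P_i/M$ in the stable module category.

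To promote this stable isomorphism to a genuine isomorphism of $\Lambda$-modules, I would verify that $P_i/M$ has no indecomposable projective direct summand; together with the fact that $\tau M$ has no projective summands, a Krull-Schmidt cancellation will then conclude. Indeed, any indecomposable projective summand $P_j$ of $P_i/M$ would contribute $S_j$ to its top; but the top of $P_i/M$ is just $S_i$, forcing $P_j = P_i$ and hence $M = 0$, contradicting our assumption. The main obstacle, if one wishes to avoid invoking the $2$-Calabi-Yau structure, would be to compute $\tau M$ directly from its definition $\tau M = \ker(\nu g)$, where $g$ is the syzygy map in a minimal projective presentation of $M$; this is possible for preprojective algebras of type $A_n$ thanks to the explicit pathlike-basis description in Section~\ref{sec:finite:projectives}, but is significantly more combinatorially involved than the stable-categorical argument above.
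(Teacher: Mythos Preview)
Your argument is correct and takes a genuinely different route from the paper's. (One quibble: the edge case $M=0$ is not ``trivial''---in fact $\tau 0 = 0 \neq P_i$, so the lemma fails there as literally stated. This is a defect of the statement, not of your method, and the paper's proof has the same gap.)

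The paper argues combinatorially and directly from the definition of $\tau$: it describes the minimal projective presentation $Q \xrightarrow{g} P \to M$ explicitly in terms of the piecewise-linear boundary curve of $M$ inside the diamond picture of $P_i$ (local minima of the curve index summands of $P$, local maxima index summands of $Q$), and then observes that applying the Nakayama functor turns this into an injective copresentation of $P_i/M$, so $\tau M = \ker(\nu g) \cong P_i/M$. You instead invoke the structural fact that $\Lambda$ is self-injective with stably $2$-Calabi--Yau module category, deduce $\tau \cong \Omega^{-1}$ on $\underline{\mathrm{mod}}(\Lambda)$, and read off $P_i/M \cong \Omega^{-1}M$ from the short exact sequence, then lift the stable isomorphism via Krull--Schmidt.

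What each buys: your argument is shorter, coordinate-free, and works verbatim for any Dynkin preprojective algebra (indeed any self-injective stably $2$-CY algebra). The paper's argument is more elementary---it needs nothing beyond the definition of $\tau$ and the diamond picture---and has the side benefit of making the minimal projective presentation of $M$ explicitly visible via the boundary curve, a description the paper later reuses in the continuous setting. A small remark on your write-up: the minimality of $P_i \twoheadrightarrow P_i/M$ is not actually needed for the stable step, since any short exact sequence with projective middle term yields a triangle $M \to 0 \to P_i/M \to \Sigma M$ in $\underline{\mathrm{mod}}(\Lambda)$.
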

	
	\begin{proof} If we draw the piecewise linear curve separating the composition factors in $M$ from the composition factors not in $M$, and going from the left corner of the projective to the right corner of the projective (as we have already done to describe $(I_w)^i$), then the terms in the minimal projective presentation can be described as follows. If the function has a local minimum at $x=i/n$, there is a summand $P_i$ in $P$. If the function has a local maximum at $x=i/n$, then there is a summand $P_i$ in $Q$. 
	When we apply $\nu$ to $Q\xrightarrow{g} P$, it changes from a projective presentation of $M$ to an injective copresentation of $P_i/M$, which proves the lemma. \end{proof}
	  
	A module is called $\tau$-rigid if $\Hom(M,\tau M)=0$. Mizuno showed the following result (in greater generality):
	
	\begin{theorem}[{\cite[Proposition 2.10]{Miz}}]\label{mizuno-rigid} $I_w$ is $\tau$-rigid. \end{theorem}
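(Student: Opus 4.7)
The plan is to reduce $\tau$-rigidity to a combinatorial vanishing of Hom spaces between summands of $I_w$ and their $\tau$-translates, and then verify it by induction on $\ell(w)$. Writing $I_w = \bigoplus_i (I_w)^i$ with $(I_w)^i \subseteq P_i$, Lemma~\ref{discrete-tau} gives $\tau(I_w)^i \cong P_i/(I_w)^i$, so
\[
	\Hom(I_w, \tau I_w) = \bigoplus_{i,j} \Hom\bigl((I_w)^j,\, P_i/(I_w)^i\bigr),
\]
and it suffices to show that each summand vanishes.

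The induction is on $\ell(w)$, using the recursion $I_{ws_k} = I_w I_k$ of Theorem~\ref{mizuno-thm} together with the algorithmic description of Lemma~\ref{algo}. The base case $w = e$ is immediate: $I_e = \Lambda$ is projective and hence $\tau I_e = 0$. For the inductive step, assume $\ell(ws_k) = \ell(w) + 1$ and that $I_w$ is $\tau$-rigid. By Lemma~\ref{algo}, $(I_{ws_k})^\ell = (I_w)^\ell$ for $\ell \neq k$, while $(I_{ws_k})^k$ is obtained from $(I_w)^k$ by removing the copies of $S_k$ at its top. Passing from $I_w$ to $I_{ws_k}$ thus enlarges the target $P_k/(I_{ws_k})^k$ by these newly exposed $S_k$'s and trims the source in the $j=k$ summand. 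I would argue that any putative new map into a newly exposed $S_k$ at the top of $P_k/(I_{ws_k})^k$ must factor through a composition factor already present in $P_k/(I_w)^k$, and so is zero by the inductive hypothesis; dually, removing generators from the top of $(I_w)^k$ can only shrink the Hom space on the source side, never enlarge it.

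The main obstacle is the careful tracking of generators and their possible images, which splits into a case analysis on whether $i = k$, $j = k$, both, or neither, and relies on the Lipschitz-type constraint on the piecewise linear functions $f_i$ encoding each $(I_w)^i$. A cleaner alternative exploits that $\Lambda$ is self-injective with a $2$-Calabi--Yau stable module category: AR duality then gives $\underline{\Hom}(M, \tau N) \cong D\,\underline{\operatorname{Ext}}^1(N, M)$, so that $\tau$-rigidity of $I_w$ (modulo projective summands) becomes equivalent to $\operatorname{Ext}^1(I_w, I_w) = 0$. This reformulation reduces the problem to checking that the projective resolutions of the $(I_w)^i$, which can be read off directly from the piecewise linear picture of the radical filtration, produce no nontrivial extensions.
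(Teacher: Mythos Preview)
The paper does not give its own proof of this statement: Theorem~\ref{mizuno-rigid} is simply quoted from \cite[Proposition~2.10]{Miz} with no argument supplied. So there is nothing in the paper to compare your proposal against.

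That said, your inductive sketch has a genuine gap. You assert that, by Lemma~\ref{algo}, $(I_{ws_k})^\ell = (I_w)^\ell$ for all $\ell \neq k$, but Lemma~\ref{algo} says no such thing: it says that $I_wI_k$ is obtained from $I_w$ by removing \emph{all} copies of $S_k$ from the top of $I_w$, across every summand. In general $(I_w)^\ell$ can have $S_k$ in its top even when $\ell \neq k$ --- indeed, in Example~\ref{radical-example}(3) the summand $(I_w)^3$ has both $S_1$ and $S_3$ in its top --- so multiplying by $I_k$ may alter several summands at once. This undercuts the case split $i=k$, $j\neq k$, etc., and with it the claim that ``removing generators can only shrink the Hom space on the source side.'' Even if you repair the bookkeeping, the key step becomes showing that $\Hom((I_{ws_k})^j, S_k)=0$ for every $j$, and that does not follow from the inductive hypothesis alone.

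Your alternative via the $2$-Calabi--Yau property is closer in spirit to how Mizuno actually argues, but it also needs one more step than you indicate. Auslander--Reiten duality gives $D\operatorname{Ext}^1(I_w,I_w)\cong \underline{\Hom}(I_w,\tau I_w)$, with \emph{stable} Hom on the right. To pass from $\underline{\Hom}(I_w,\tau I_w)=0$ to the genuine vanishing $\Hom(I_w,\tau I_w)=0$ you must separately rule out nonzero maps $I_w \to \tau I_w$ that factor through a projective--injective, and this is where the specific structure of $I_w$ (or an explicit computation of its minimal projective presentation) is needed.
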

	
	\section{A preprojective category}\label{sec:category}
	In this section we seek to define preprojective categories of type $A$ over $\RR$ and over an interval $\II\subset\RR$, and to describe the representable projectives in the case when $\II\subset\RR$ is of the form $(a,b)$, for $a<b\in\RR$.
	
	\subsection{Definition for $\RR$}\label{sec:category:definition for R}
	In this section we define a preprojective category over the real line.
	
	Let $\RR$ have the usual total order.
	We construct a category $\preR$ in the following way.
	We set $\mathrm{Ob}(\preR)=\RR$.
	For each $x<y$ we have the morphisms $\alpha_{yx}:x\to y$ and $\alpha^*_{xy}:y\to x$.
	(The $\alpha$'s go right and the $\alpha^*$'s go left.)
	For notational purposes, we identify the morphisms $\alpha_{xx}=\alpha^*_{xx}=\boldsymbol{1}_x$.
	The $\alpha$'s and $\alpha^*$'s are subject to the following relations:
	\begin{enumerate}
		\item\label{def:relation:pause} $\alpha_{zy}\alpha_{yx} = \alpha_{zx}$ and $\alpha^*_{xy}\alpha^*_{yz}=\alpha^*_{xz}$, for $x\leq y\leq z$ in $\RR$, and
		\item\label{def:relation:equivalent} $\alpha^*_{x+a-b,x+a}\alpha^{}_{x+a,x}=\alpha^{}_{x+a-b,x-b}\alpha^*_{x-b,x}$ for $a,b$ positive reals.
	\end{enumerate}
	When we write $\alpha^{(*)}_{xy}$ we mean whichever of $\alpha_{xy}$ or $\alpha^*_{xy}$ exists, depending on whether $y\leq x$ or $x\leq y$, respectively.

	As in the discrete case, we define the length of $\alpha^{(*)}_{xy}$ to be $|x-y|$, and we define the length of a composition of these to be the sum of their individual lengths.
        
	By $\Bbbk$-linearizing, for some algebraically closed field $\Bbbk$, we have $\Hom$ spaces given by
	\[
		\Hom_{\preR}(x,y) \cong \Bbbk^{\RR_{\geq |x-y|}},
	\]
	where the $(|x-y|)$th coordinate corresponds to $\alpha^{(*)}_{yx}$.
	That is, $\Hom_{\preR}(x,y)$ contains all finite sums $\sum_{i=1}^m \lambda_if_i$ where each $\lambda_i\in\Bbbk$ and each $f_i:x\to y$ is a composition of $\alpha$ and $\alpha^*$'s.
	
	Also as in the discrete case, we refer to a non-zero composition of $\alpha$'s and $\alpha^*$'s as path like.
	(Over $\RR$, any composition is non-zero, but later we will consider situations where some compositions are zero.)
	We denote by $\len(f)$ the length of a path like morphism in $\preR$.

	Notice that, when $x=y$, we have $\End(x)\cong \Bbbk^{\RR_{\geq 0}}$.
	This acts like a graded ring because we can use the relations 1 and 2 to obtain
	\[
		\alpha^*_{x,x+b}\alpha_{x+b,x}\circ \alpha^*_{x,x+a}\alpha_{x+a,x} = \alpha^*_{x,x+a+b}\alpha_{x+a+b,x}.
	\]
	
	In fact every path like morphism in $\preR$ has a length, or grading, such that the length of $g\circ f$ is equal to the sum of lengths of $f$ and $g$, for path like $f$ and $g$.
	For convenience, we consider $\preR$ to have a zero object (an object that is both initial and terminal in $\preR$). 
	
	
	\subsection{Definition for other intervals}\label{sec:category:definition for I}
	
	In this section we define and examine $\preI$ for an interval $\II\subseteq\RR$.
	
	\begin{definition}\label{def:preI}
		Let $\II$ be an arbitrary subinterval of $\RR$.
		We define $\preI$ to be the unique \emph{quotient} category of $\preR$ obtained by setting $x$ isomorphic to the zero object for each $x\notin\II$.
	\end{definition}
	
	If $\II\subsetneq \RR$ there is a consequence on $\Hom$ spaces.
	First note that this implies $\II$ has a lower bound or upper bound (possibly both).
	If $\II$ has one bound, call it $\bou$, then we let $\boldell_x=|x-\bou|$.
	If $\II$ has both an upper bound and lower bound, say $\bou_0$ and $\bou_1$ respectively, then we let $\boldell_x=\min\{|x-\bou_0|,|x-\bou_1|\}$.
	We denote by $\bou_x$ the closest endpoint to $x$, whether $\II$ has one or two endpoints.
	If $|x-\bou_0|=|x-\bou_1|$, and exactly one of $\bou_0$,$\bou_1$ is not in $\II$, we pick $\bou_x$ to be the one not in $\II$.
	If $|x-\bou_0|=|x-\bou_1|$ and either $\bou_0,\bou_1\in\II$ or $\bou_0,\bou_1\notin\II$, we pick $\bou_x=\bou_0$.
		
	Set $\boldell_{xy}=\min\{\boldell_x,\boldell_y\}$.	
	For a pair $x$ and $y$ in $\II$, let $\bou_{xy}=\bou_x$ if $\boldell_x<\boldell_y$ and let $\bou_{xy}=\bou_y$ if $\boldell_y<\boldell_x$.
	If $\boldell_x=\boldell_y$ then we determine $\bou_{xy}$ as follows:
	\begin{itemize}
		\item If $\bou_x\in\II$ but $\bou_y\notin\II$, then $\bou_{xy}=\bou_y$.
		\item If $\bou_y\in\II$ but $\bou_x\notin\II$, then $\bou_{xy}=\bou_x$.
		\item If $\bou_x,\bou_y\in\II$ or $\bou_x,\bou_y\notin\II$, then $\bou_{xy}=\min\{\bou_x,\bou_y\}$.
	\end{itemize}
	Notice $\bou_{xy}=\bou_{yx}$, $\boldell_{xy}=\boldell_{yx}$, $\bou_{xx}=\bou_x$, and $\boldell_{xx}=\ell_x$.
	
	We can now describe $\Hom$ spaces in $\preI$:
	\[
		\Hom_{\preI}(x,y) \cong
		\begin{cases}
			\Bbbk^{[\,|x-y|,\,|x-y|+\boldell_{xy}\,]} & \bou_{xy}\in\II \\
			\Bbbk^{[\,|x-y|,\,|x-y|+\boldell_{xy}\,)} & \bou_{xy}\notin\II.
		\end{cases}
	\]
	This also gives us $\End_{\preI}(x)\cong \Bbbk^{[\, 0, \boldell_x\, ]}$ or $\End_{\preI}(x)\cong \Bbbk^{[\, 0, \boldell_x\, )}$, depending on if $\bou_x\in\II$ or $\bou_x\notin\II$, respectively.
	Since $\boldell_{\bou_0}=0$, notice that, for $\bou_0,y\in\II$, we have $\Hom_{\preI}(\bou_0,y) \cong \Bbbk$.
	This is similarly true for $\bou_1$, or $\bou$ if $\II$ has just one bound.
	
	Intuitively, one should think of $\bou_x$ being infinitesimally closer to a point $x\in\II$ if $\bou_x\notin\II$ than if $\bou_0\in\II$, although doing this technically would only complicate things.
	
	There is an interesting situation that can happen with $\II$ that cannot happen with finite-dimensional preprojective algebras.
	Typically, for a vertex $i$ at one end of the $A_n$ diagram, there is a unique path in the preprojective path algebra from $i$ to any other vertex $j$.
	In our context, we say that we have a one-dimensional $\Hom$ space, like with $\bou$ above.
	However, it is possible that $\II$ is a bounded open subinterval of $\RR$.
	This yields nontrivial graded endomorphism rings everywhere and always multi-dimensional $\Hom$ spaces.
	
	In our setting, we actually want $\II$ to be open since the point(s) on its boundary do not significantly contribute to $\Hom$ spaces or endomorphism rings.
	Moreover, when we look at permutons in Section~\ref{sec:permutons}, we will see that taking $\II$ to be open removes some technical considerations.
	
	\begin{proposition}\label{prop:preprojective classes}
		Up to equivalence, there are three preprojective categories $\preI$ for an open $\II\subseteq\RR$:
		\begin{enumerate}
			\item\label{prop:preprojective classes:bounded} $\II$ is bounded. That is, $\II=(a,b)$ for $a,b\in\RR$.
			\item\label{prop:preprojective classes:half bounded} $\II$ is bounded on one side but not the other. That is, $\II=(-\infty,b)$ or $\II=(a,+\infty)$, for $a,b\in \RR$.
			\item\label{prop:preprojective classes:unbounded} $\II=\RR$.
		\end{enumerate}
	\end{proposition}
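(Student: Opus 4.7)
The plan is to observe that any open subinterval of $\RR$ belongs to one of the three listed classes (characterized by the number of finite endpoints), and then to show that within each class, an affine bijection of $\RR$ sending one interval to another lifts to an equivalence of the associated preprojective categories.

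First I would set up the geometric classification: for any two nonempty open intervals $\II,\JJ\subseteq\RR$ of the same type, there is an affine bijection $\phi:\RR\to\RR$, $\phi(x)=cx+d$ with $c\neq 0$, such that $\phi(\II)=\JJ$. For the bounded class (1) and the unbounded class (3), one may take $c>0$. For the half-bounded class (2), one must allow $c<0$ when the finite endpoints lie on opposite sides, e.g.\ to identify $(-\infty,b)$ with $(a,+\infty)$.

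Next I would build the candidate equivalence $F_\phi:\preI\to\preJ$. On objects, $F_\phi(x):=\phi(x)$. On the generating morphisms, when $c>0$ send $\alpha_{yx}\mapsto\alpha_{\phi(y)\phi(x)}$ and $\alpha^*_{xy}\mapsto\alpha^*_{\phi(x)\phi(y)}$; when $c<0$, swap the two families, setting $\alpha_{yx}\mapsto\alpha^*_{\phi(y)\phi(x)}$ and $\alpha^*_{xy}\mapsto\alpha_{\phi(x)\phi(y)}$, noting that when $c<0$ the inequality $y>x$ becomes $\phi(y)<\phi(x)$ so the symbols $\alpha^{(*)}$ must match the new direction. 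Extend $\Bbbk$-linearly. I would then verify that $F_\phi$ respects the two defining relations. Relation (1), the compositional additivity of $\alpha$'s and of $\alpha^*$'s, is preserved tautologically when $c>0$ and by the dual statement (which is itself relation (1)) when $c<0$. Relation (2) is preserved for $c>0$ by substituting the parameters $a,b$ with $\vert c\vert a,\vert c\vert b$; for $c<0$, a direct trace shows that the image of the instance with parameters $a,b$ at $x$ is exactly the instance with parameters $b,a$ at $\phi(x)$, which holds by the symmetry of relation (2) in its up/down arguments. This relation-checking in the orientation-reversing case is the main technical obstacle.

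Finally I would show $F_\phi$ is an equivalence. Essential surjectivity is automatic since $F_\phi$ is a bijection on objects. For full-faithfulness I would use the explicit description of Hom spaces: $\Hom_{\preI}(x,y)\cong\Bbbk^S$ with basis indexed by the interval of path lengths $S=[\,\vert x-y\vert,\vert x-y\vert+\boldell_{xy}\,]$ (half-open when $\bou_{xy}\notin\II$), and $F_\phi$ sends a length-$\ell$ path to a length-$\vert c\vert\ell$ path. Since $\phi$ rescales $\vert x-y\vert$ and $\boldell_{xy}$ both by $\vert c\vert$, the induced map of indexing sets is a bijection, yielding the required isomorphism on Hom spaces, and composition is preserved because it adds lengths, which $\phi$ scales linearly. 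Distinctness of the three classes can be read off a categorical invariant: the length grading on $\End(x)$ is intrinsic (via the filtration by powers of the ideal of positive-length morphisms), and $\sup_{x\in\II}\boldell_x$ is finite in case (1), infinite with each $\boldell_x$ finite in case (2), and uniformly infinite in case (3).
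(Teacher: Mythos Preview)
Your construction of the equivalences $F_\phi$ via affine maps is correct and is essentially the paper's approach, worked out in more detail; the paper sketches a linear rescaling for the bounded case and translation-plus-reflection for the half-bounded case, without writing out the relation checks you supply for the $c<0$ situation.

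The gap is in your distinctness argument. You claim the length grading on $\End(x)$ is intrinsic ``via the filtration by powers of the ideal of positive-length morphisms,'' but this is false: writing $m_x$ for that ideal, one has $m_x^2=m_x$, since any pathlike $e_\ell$ with $\ell>0$ factors as $e_{\ell/2}\cdot e_{\ell/2}$. The filtration $m_x\supseteq m_x^2\supseteq\cdots$ is therefore constant and recovers nothing; indeed $\Bbbk^{[0,a)}\cong\Bbbk^{[0,b)}$ as $\Bbbk$-algebras for all finite $a,b>0$ via $e_\ell\mapsto e_{(b/a)\ell}$, so no invariant of a single endomorphism ring can separate the bounded from the half-bounded case, and your quantity $\sup_x\boldell_x$ is not, as stated, a categorical invariant. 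Your idea does distinguish case~(3) from the others (in $\preR$ no two nonzero morphisms compose to zero, equivalently the maximal ideal of each $\End(x)$ is not nil), but for (1) versus (2) you need a genuinely global property. The paper's route is to exhibit one at the level of objects: in $\Lambda_{(0,1)}$ the midpoint $x=\tfrac12$ has the feature that precomposition by any nonzero $g\colon y\to x$ never realizes $\Hom(x,z)$ as a proper subspace of $\Hom(y,z)$, for any $y,z$; in $\Lambda_{(0,\infty)}$ every object admits such a proper embedding.
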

	\begin{proof}
		First we show that for any open, bounded $\II$ and $\JJ$, the categories $\preI$ and $\preJ$ are equivalent.
		For any pair of finite intervals $\II$ and $\JJ$ there is a linear, order preserving homeomorphism $\II\to\JJ$ as intervals that extends to a functor between the corresponding categories.
		Let $r$ be the length of $\JJ$ divided by the length of $\II$.
		Then a path like morphism $f$ in $\preI$ with length $a$ is sent to a path like morphism in $\Lambda_{\JJ}$ with length $ra$.
		This gives us an equivalence of categories.
		
		If $\II$ and $\JJ$ are both bounded above or both bounded below, say by $a$ and $b$, respectively, then we may simply translate $\II$ to $\JJ$ as intervals.
		In particular, we send $x$ in $\II$ to $(b-a)+x$ in $\JJ$.
		This gives us an equivalence of categories between $\preI$ and $\preJ$.
		If $\II$ is bounded below, then $\preI$ is equivalent to $\Lambda_{(0,+\infty)}$.
		Then, multiplying by $-1$, we see $\Lambda_{(0,+\infty)}$ is equivalent to $\Lambda_{(-\infty,0)}$.
		Thus, for all half bounded $\II$ and $\JJ$, the categories $\preI$ and $\preJ$ are equivalent.
		
		Now we show the three cases are distinct.
		We can immediately see that $\preR$ is distinct from the bounded and half-bounded cases because there are no path like morphisms in $\preR$ that compose to 0.
		To see the bounded and half bounded classes are mutually exclusive, let $0<x<z$ in $\II=(0,+\infty)$.
		Choose any $y\in \II$ strictly between $x$ and $z$.
		Then $\Hom_{\preI}(x,z)\to \Hom_{\preI}(y,z)$, given by $f\mapsto f\circ \alpha^*_{xy}$, is always a strict inclusion.
		However, if we have the bounded interval $\JJ=(0,1)$ and choose $x=\frac{1}{2}$, there is no pair $z$ and $y$ such that $\Hom_{\preJ}(x,z)\to\Hom_{\preJ}(y,z)$ is a strict inclusion by any $\alpha^{(*)}$.
		One may use the grading to quickly check that this is true.
		If there is no inclusion with an $\alpha^{(*)}$ then there can be no inclusion by a path like morphism or a finite sum of path like morphisms.
	\end{proof}
	
	So, from now on, we assume $\II=(0,1)$ when $\II$ is open and bounded.
	
	\begin{remark}\label{rmk:preI and preIop}
		For any bounded open interval $\II=(a,b)\subset \RR$, we see that $\preI$ is canonically isomorphic to $\preI^{\text{op}}$ by sending each object $x\in \preI$ to the object $x\in \preI^{\text{op}}$.
		On morphisms we have $\alpha_{yx}\mapsto (\alpha^*_{xy})^{\text{op}}$ and $\alpha^*_{xy}\mapsto (\alpha_{yx})^{\text{op}}$, for $x<y$ in $\II$.
	\end{remark}
        
	\subsection{Representable projectives}\label{sec:category:projectives}
	This section begins the study of $\preI$-modules, for some bounded and open $\II\subsetneq\RR$, starting with representable projectives.
	In particular, we consider our modules to be functors from $\preI$ to $\kvec$.
	In later sections we will consider right $\preI$-modules even though in earlier sections we consider functors written as left modules.
	Because of the canonical isomorphism $\preI\cong\preI^{\text{op}}$ (Remark~\ref{rmk:preI and preIop}) this discrepancy does not actually cause any issues.

	We relate representable projective $\preI$-modules to objects in the bounded derived category $\DbAI$, where $A_{\II}$ is a continuous quiver of type $A$, indexed over $\II$ instead of $\RR$, with finitely-many sinks and sources.
	See \cite{IRT23} for the definition and foundational theorems about representations of continuous quivers of type $A$.
	
	A \emph{$\preI$-module} is a $\Bbbk$-linear functor $\preI\to\Bbbk\text{-}\mathrm{Vec}$.
	Since every $\Hom$ space in $\preI$ is infinite-dimensional, we see immediately that we cannot live in the world of pointwise finite-dimensional modules if we wish to study even the representable projective modules.
	
	Assume $A_{\II}$ has finitely-many sinks and sources.
	Then, by \cite{R20}, the indecomposables in $\mathcal{D}^b(\mathrm{rep}^{fp}(A_{\II}))=:\DbAI$ form an infinite strip that we may index as $\II\times\RR$.
	The category $\DbAI$ is triangulated equivalent to the category $\mathcal{D}$ from \cite{IT15}.
	In both cases, we have reindexed the (indecomposable) objects so that we have an infinite vertical strip.
	
	Consider $P_x:=\Hom_{\preI}(x,-)$
	and let $(x,a)\in\II\times\RR$, for some $a\in\RR$.
	Using \cite{R20,IT15}, we can visualize the $\Hom$ support of $(x,a)$ in $\DbAI$ with Figure~\ref{fig:Hom support of (x,a)}.
	\begin{figure}[h]\begin{center}\begin{tikzpicture}[scale=3]
		\filldraw[fill=blue, fill opacity=.25, draw opacity=0] (.3,1) -- (1,.3) -- (.7,0) -- (0,.7) -- cycle;
		\foreach \x in {0,1}
			\draw[dashed, draw opacity=0.5] (\x,1.1) -- (\x,-0.1);
		\filldraw[blue] (.3,1) circle[radius=.015];
		\draw[blue] (0,.7) -- (.3,1) -- (1,.3);
		\draw[dashed, draw opacity=0.5] (1,.3) -- (.7,0) -- (0,.7);
		\filldraw[fill=white, draw opacity=0.5] (0,.7) circle[radius=.015];
		\filldraw[fill=white, draw opacity=0.5] (1,.3) circle[radius=.015];
		\filldraw[fill=white, draw opacity=0.5] (.7,0) circle[radius=.015];
		\draw (.1,1.1) node[anchor=south east] {\scriptsize $\{0\}\times \RR$};
		\draw (0.9,1.1) node[anchor=south west] {\scriptsize $\{1\}\times \RR$};
		\draw (.3,.99) node[anchor=south] {\scriptsize $(x,a)$};
		\draw[<->, draw opacity =0.5] (-0.5,.9) -- (-0.5,.1);
		\draw (-0.5,.1) node[anchor=north] {\scriptsize $+$};
		\draw (-0.5,.9) node[anchor=south] {\scriptsize $-$};
	\end{tikzpicture}
	\caption{$\Hom$ support of $(x,a)$ in $\DbAI$. In our figures we use the following conventions. The positive direction in $\RR$ is \emph{down}. We use light gray to indicate a line or curve of points is not included. We similarly draw light gray circles with white interior to indicate that a point is not included. We use coloring to indicate a line, curve, or point is included. We use light coloring to indicate a region is included.}\label{fig:Hom support of (x,a)}
	\end{center}\end{figure}
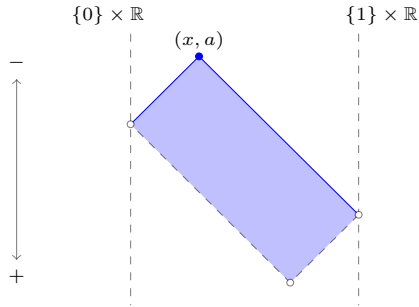
	
	We can visualize $\Hom_{\preI}(x,-)$ as $\Hom_{\DbAI}((x,a),-)$.
	Each path like $f:x\to y$ has a length $\ell$ in $[|x-y|,|x-y|+\boldell_{xy})$.
	The point $(y,a+\ell)$ is inside the $\Hom$ support of $(x,a)$.
	However, if we compose $f$ with some $f':y\to z$ such that $f'\circ f$ would have length $\ell'\geq |x-y|+\boldell_{xy}$, we get the 0 morphism.
	We also notice that, in this case, the point $(z,\ell')$ is not in the $\Hom$ support of $(x,a)$.
	Thus, the $\Hom$-support of $(x,a)$ in $\DbAI$ helps us ``see'' all the path like morphisms $f$ in $P_x$.
	
\section{Submodules and quotient modules of representable projectives}\label{sec:subs and quots of projs}
	Now we want to look at submodules $M$ (and quotient modules $N$ ) of $P_x$ for some $x\in\II$.
	We may also find a submodule $\widetilde{M}$ of $\widetilde{P}_{x}=\Hom_{\DbAI}((x,a),-)$ that helps us model $M$.
	We continue to consider modules as functors $\preI\to\kVec$, which coincides with left modules, but these are equivalent to right modules since $\preI$ is canonically isomorphic to $\preI^{\text{op}}$ (Remark~\ref{rmk:preI and preIop}).
	
	Recall, by Proposition~\ref{prop:preprojective classes}(\ref{prop:preprojective classes:bounded}), that we can, and do, assume that $\II=(0,1)$.
	
	Let $M$ be a module in $\ReppreI$ and $y\in \II$.
	Consider a sum $\sum_{i=1}^m \lambda_i f_i$ of elements of $M(y)$, where each $\lambda_i\in \Bbbk$ and each $f_i$ is path like.
	Unless stated otherwise, we assume $f_i\neq f_j$ if $i\neq j$ and that $\lambda_i\neq 0$ for all $1\leq i\leq m$.
	In particular, we do not have something like $\lambda f_1- \lambda f_1 + 0f_2$ in our sum.
	Moreover, we assume $\len(f_i)<\len(f_j)$ if and only if $i<j$.
	
	\begin{lemma}\label{lem:submodules of projectives are downward closed}
		Suppose $M$ is a submodule of $P_x=\Hom_{\preI}(x,-)$.
		If $\sum_{i=1}^m \lambda_i f_i\in M(y)\subseteq\Hom_{\preI}(x,y)$, for each $\lambda_i\in\Bbbk$ and each $f_i$ path like, then $f_i\in M(y)$ for each $1\leq i\leq m$.
		Moreover, if $f\in\Hom_{\preI}(x,y)$ is path like with length $\ell$ and there is some $g\in M(y)$ with length less than $\ell$, then $f\in M(y)$.
	\end{lemma}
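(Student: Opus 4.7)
The plan is to first establish the moreover clause and then use it to prove the main statement by induction on $m$, with an inner iteration extending the range of lengths known to lie in $M(y)$.

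For the moreover clause, suppose $g \in M(y)$ is path like of length $\ell_0$ and $f \in \Hom_{\preI}(x,y)$ is path like of length $\ell > \ell_0$. Because $\ell < |x-y| + \boldell_{xy}$ and $\ell_0 \geq |x-y|$, we have $\ell - \ell_0 < \boldell_{xy} \leq \boldell_y$, so the unique path like endomorphism $e \in \End_{\preI}(y)$ of length $\ell - \ell_0$ exists. The composition $e \circ g$ is a path like morphism $x \to y$ of length $\ell$, nonzero because all intermediate objects visited lie in $\II$, and by uniqueness of path like morphisms of a given length it equals $f$. Since $M$ is a subfunctor of $P_x$, it is closed under post-composition by $\End_{\preI}(y)$, so $f = e \cdot g \in M(y)$.

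For the main statement, I would induct on $m$; the base $m = 1$ is immediate since $M(y)$ is a $\Bbbk$-subspace. For $m \geq 2$, write $v = \sum \lambda_i f_i$ with $\ell_1 < \cdots < \ell_m$, set $L := |x-y| + \boldell_{xy}$, and let $s_0 := L - \ell_m > 0$ (positive since $\II$ is open, so $\ell_m < L$). Post-composing $v$ with the length-$s_0$ endomorphism of $y$ annihilates the top summand:
\[
    e_{s_0} \cdot v = \sum_{i=1}^{m-1} \lambda_i [x,y]_{\ell_i + s_0} \in M(y),
\]
a sum of $m-1$ nonzero path like morphisms. The inductive hypothesis places each summand in $M(y)$, and applying the moreover clause to $[x,y]_{\ell_1 + s_0}$ shows that every path like morphism of length at least $L_0 := \ell_1 + s_0$ lies in $M(y)$.

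The main obstacle is that $L_0 > \ell_1$ in general, so the argument so far does not yet put the original $f_i$ in $M(y)$. I would close the gap by an iteration: set $L_k := L_0 - k(\ell_2 - \ell_1)$ and prove inductively that every path like morphism of length at least $L_k$ lies in $M(y)$. At iteration $k$, for any $s$ in the non-empty interval $[\max(0, L_{k-1} - \ell_2), s_0)$ each length $\ell_i + s$ with $i \geq 2$ satisfies $\ell_i + s \geq L_{k-1}$, so $[x,y]_{\ell_i + s} \in M(y)$ by the previous iteration; subtracting these from $e_s \cdot v \in M(y)$ extracts $\lambda_1 [x,y]_{\ell_1 + s} \in M(y)$, and as $s$ varies the path like morphisms of length in $[L_k, L_{k-1})$ are produced. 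Because $\ell_2 - \ell_1 > 0$, after finitely many iterations $L_k \leq \ell_1$, at which point every $f_i = [x,y]_{\ell_i}$ lies in $M(y)$. The careful point is verifying non-emptiness of the $s$-interval at each step, which follows from the estimate $L_{k-1} - \ell_2 \leq L_0 - \ell_2 = s_0 - (\ell_2 - \ell_1) < s_0$.
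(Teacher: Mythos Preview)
Your proof is correct, but it follows a genuinely different strategy from the paper's.

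The paper also argues by induction on $m$, but to isolate $\lambda_1 f_1$ it repeatedly subtracts from $F=\sum \lambda_i f_i$ a multiple of $e\cdot F$ chosen to cancel the \emph{second-lowest} term; each such subtraction may introduce new higher-length terms, so one must argue that the process terminates. The paper handles this by observing that all lengths which can ever appear lie in $\ell_1 + R$, where $R$ is the additive semigroup generated by the differences $\ell_i-\ell_1$; since only finitely many elements of $R$ lie below $\boldell_{xy}-\ell_1$, there is a minimum positive gap $\delta$, and the second-lowest length increases by at least $\delta$ at every step.

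Your approach instead annihilates the \emph{top} term with $e_{s_0}$, invokes the inductive hypothesis to get a threshold $L_0=\ell_1+s_0$ above which everything is in $M(y)$, and then runs a separate iteration lowering that threshold by the fixed amount $\ell_2-\ell_1$ at each step. This sidesteps the semigroup argument entirely: the step size is constant rather than something that has to be bounded from below. In that sense your route is more elementary, and it makes the role of the ``moreover'' clause (which you prove first, the paper last) more central. The paper's argument, on the other hand, keeps the manipulation closer to the original sum and perhaps generalises more readily to situations where one cannot so cleanly kill the top term.
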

	\begin{proof}
		The proof is by induction of $m$, the number of terms appearing in the sum, with $m=1$ as the trivial base case. While, logically, the $m=2$ case can be treated as a special case of the general $m$ analysis, we write it out explicitly, as a warm-up. 
		Suppose, then, that $\lambda_1 f_1 + \lambda_2 f_2\in M(y)$, where $f_1,f_2$ are path like and $\lambda_1,\lambda_2\in\Bbbk$.
	
		There must be a path like endomorphism $\lambda_3f_3$ in $\End_{\preI}(y)$ of length $\len(f_2)-\len(f_1)$, or else $f_2=0$. 
		Then $\lambda_2 f_2 + \lambda_2\lambda_3 f_3 f_2\in M(y)$. 
		So, we must have 
	\[
		\lambda_1 f_1 + \lambda_2 f_2- \lambda_2f_2 + \lambda_2\lambda_3f_3f_2 = \lambda_1 - \lambda_2 \lambda_3 f_3 f_2 \in M(y).
	\]
		Set $\lambda'_2=-\lambda_2\lambda_3$, $f'_2=f_3f_2$.
	
		Then we start again with $\lambda_1f_1 + \lambda'_2f'_2$ except $\len(f'_2)-\len(f_1)=2(\len(f_2)-\len(f_1))$.
		In finitely-many steps, we obtain $\lambda_1f_1+ \lambda''_2f''_2$ where $2(\len(f''_2)-\len(f_1))> \boldell_{xy}-\ell_1$.
		In this situation, we multiply by $\lambda''_3f''_3\in\End_{\preI}(y)$ whose length is $\len(f''_2)-\len(f_1)$.
		We obtain simply $\lambda''_2f''_2$.
		Then $\lambda_1f_1 +\lambda''_2f''_2 - \lambda''_2f''_2=\lambda_1f_1\in M(y)$.
		Therefore $f_1\in M(y)$ and $f_2=\lambda_2^{-1}(\lambda_1f_1+\lambda_2f_2-\lambda_1f_2)\in M(y)$.

		We now give the general argument for $m\geq 2$. Let us write $F$ for $\sum_{i=1}^m \lambda_if_i$. 
		The strategy is the same: by subtracting from $F$ a multiple of $F$ by an endomorphism of $y$, we can cancel off the next-to-lowest length term in $F$, at the expense of introducing some higher-length terms.
		We seek to do this repeatedly, until the length of the next-to-lowest length term would be greater than $\ell_{xy}$, which would allow us to conclude that there is no next-to-lowest length term, and $\lambda_1f_1\in M(y)$.
		At this point, we know that $F-\lambda_1f_1\in M(y)$, and we can proceed by induction.

		The inconvenience here, compared to the $m=2$ case, is that we cannot initially determine how much we will increase the length of the next-to-lowest length term at each step, and in principle, there could be an infinite sequence of steps yielding smaller and smaller increases.
		It is possible this sequence would never get us to the point where the next-to-lowest length would be greater than $\ell_{xy}$.
		We therefore need a lower bound on the increase at each step.

		Let $R$ be the additive semigroup generated by $\{\len(f_i)-\len(f_1)\}_{2\leq i \leq m}$.
		Clearly, all the lengths appearing in $F$ lie in $\len(f_1)+ R$.
		We claim that this remains true at each step of the process, because we always want to cancel off some length which appears (and which, by assumption, is of the form $\len(f_1)+n$ with $n\in R$).
		We therefore subtract off $gF$, where $g\in\End_{\preI}(y)$ is of length $n$.
		Thus, a term in $gF$ of length $\len(f_1)+n'$, with $n'\in R$, contributes a term of length $\len(f_1)+n'+n$ to the new sum.

		Now observe that there are only finitely many elements of $R$ which are less than $\ell_{xy}-\len(f_1)$.
		Therefore, there is some minimal difference between them, say $\delta$.
		At each step in the process, we increase the length of the next-to-lowest length term of the sum by at least $\delta$.
		Therefore, after a finite number of steps, we will achieve a sum equal to $\lambda_1f_1$, and we can proceed by induction as explained above.

		Finally, since $M$ is a submodule of $P_x$, if $f\in M(y)$ is path like with length $\ell$ then we must have all path like $f'\in M(y)$ where the length of $f'$ is greater than $\ell$ and $f'\in P_x(y)$.
		This completes the proof.
	\end{proof}
	
	Lemma~\ref{lem:submodules of projectives are downward closed} allows us to immediately describe $\widetilde{M}$ as a submodule of $\widetilde{P}_x$ such that $\widetilde{M}$ corresponds to $M$ and $\widetilde{P}_{(x,a)}$ corresponds to $P_x$.
	We take all points $(y,b)$ in the Hom support of $(x,a)$ such that there is a path like morphism of length $b$ in $M(y)$.
	
	We see some examples in Figure~\ref{fig:submodules of projectives}.
	For each point $(y,b)$ in the support of a submodule $\widetilde{M}\hookrightarrow\Hom_{\DbAI}((x,a),-)$, we see that the support of $\widetilde{M}$ must contain the $\Hom$ support of $(x,a)$ intersected with the $\Hom$ support of $(y,b)$.
	Intuitively, submodules of $P_x$ must be ``downward closed'' when drawn as $\DbAI$-modules.
	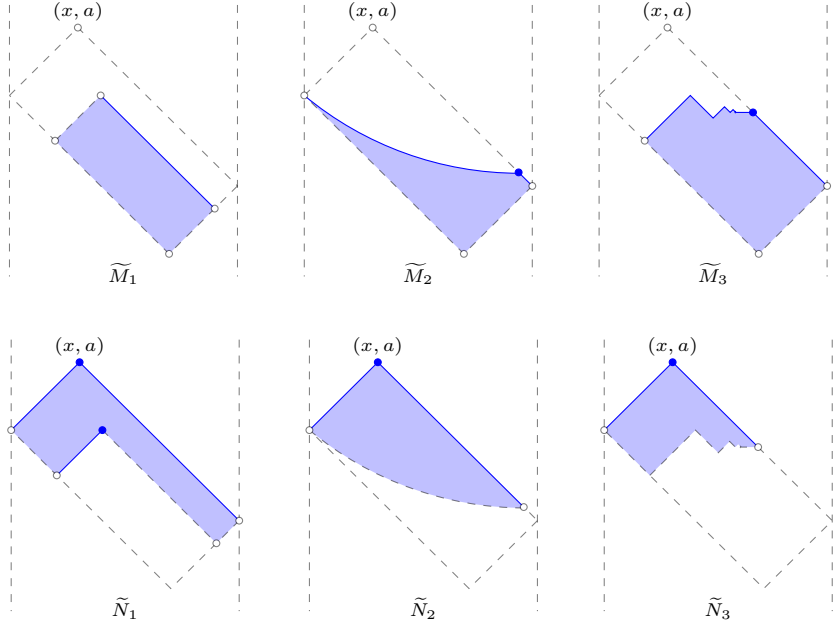
\begin{figure}\begin{center}
	\begin{tikzpicture}[scale=3]
		\filldraw[fill=blue, fill opacity=.25, draw opacity=0] (.7,0) -- (.2,.5) -- (.4,.7) -- (.9,.2) -- cycle;
		\foreach \x in {0,1}
			\draw[dashed, draw opacity=0.5] (\x,1.1) -- (\x,-0.1);
		\draw[dashed, draw opacity=0.5] (.2,.5) -- (.4,.7);
		\draw[blue] (0.4,0.7) -- (.9,.2);
		\draw[dashed, draw opacity=0.5] (0,.7) -- (.3,1) -- (1,.3);
		\draw[dashed, draw opacity=0.5] (1,.3) -- (.7,0) -- (0,.7);
		\filldraw[fill=white, draw opacity=0.5] (.3,1) circle[radius=.015];
		\filldraw[fill=white, draw opacity=0.5] (.7,0) circle[radius=.015];
		\filldraw[fill=white, draw opacity=0.5] (.9,.2) circle[radius=.015];
		\filldraw[fill=white, draw opacity=0.5] (.2,.5) circle[radius=.015];
		\filldraw[fill=white, draw opacity=0.5] (0.4,0.7) circle[radius=0.015];
		\draw (.3,.99) node[anchor=south] {\scriptsize $(x,a)$};
		\draw (0.5,0) node[anchor=north] {\scriptsize $\widetilde{M}_1$};
	\end{tikzpicture}
	\qquad
	\begin{tikzpicture}[scale=3]
		\filldraw[fill=blue, fill opacity=.25, draw opacity=0] (0,.7) arc(230:270:1.47) -- (1,.3) -- (.7,0) -- (0,.7) -- cycle;
		\foreach \x in {0,1}
			\draw[dashed, draw opacity=0.5] (\x,1.1) -- (\x,-0.1);
		\draw[blue] (0,.7) arc(230:270:1.47);
		\draw[dashed, draw opacity=0.5] (0,.7) -- (.3,1) -- (0.94,0.36);
		\draw[blue] (0.94,0.36) -- (1,0.3);
		\draw[dashed, draw opacity=0.5] (1,.3) -- (.7,0) -- (0,.7);
		\filldraw[fill=white, draw opacity=0.5] (.3,1) circle[radius=.015];
		\filldraw[fill=white, draw opacity=0.5] (0,.7) circle[radius=.015];
		\filldraw[fill=white, draw opacity=0.5] (1,.3) circle[radius=.015];
		\filldraw[fill=white, draw opacity=0.5] (.7,0) circle[radius=.015];
		\filldraw[blue] (0.94,0.36) circle[radius=0.015];
		\draw (.3,.99) node[anchor=south] {\scriptsize $(x,a)$};
		\draw (0.5,0) node[anchor=north] {\scriptsize $\widetilde{M}_2$};
	\end{tikzpicture}
	\qquad
	\begin{tikzpicture}[scale=3]
		\filldraw[fill=blue, fill opacity=.25, draw opacity=0] (0.2,0.5) -- (0.4,0.7) -- (0.5,0.6) -- (0.55,0.65) -- (0.575,0.625) -- (0.5875,0.6375) -- (0.59375,.63125) -- (0.596875,0.624475) -- (0.5984375,0.6278625) -- (0.6,.625) -- (.675,.625) -- (1,.3) -- (.7,0) -- (0,.7) -- cycle;
		\foreach \x in {0,1}
			\draw[dashed, draw opacity=0.5] (\x,1.1) -- (\x,-0.1);
		\draw[blue] (0.2,0.5) -- (0.4,0.7) -- (0.5,0.6) -- (0.55,0.65) -- (0.575,0.625) -- (0.5875,0.6375) -- (0.59375,.63125) -- (0.596875,0.624475) -- (0.5984375,0.6278625) -- (0.6,.625) -- (.675,.625);
		\draw[dashed, draw opacity=0.5] (0,.7) -- (.3,1) -- (.675,.625);
		\draw[blue] (.675,.625) -- (1,0.3);
		\draw[dashed, draw opacity=0.5] (1,.3) -- (.7,0) -- (0,.7);
		\filldraw[fill=white, draw opacity=0.5] (.3,1) circle[radius=.015];
		\filldraw[blue] (.675,.625) circle[radius=0.015];
		\filldraw[fill=white, draw opacity=0.5] (0.2,0.5) circle[radius=.015];
		\filldraw[fill=white, draw opacity=0.5] (1,.3) circle[radius=.015];
		\filldraw[fill=white, draw opacity=0.5] (.7,0) circle[radius=.015];
		\draw (.3,.99) node[anchor=south] {\scriptsize $(x,a)$};
		\draw (0.5,0) node[anchor=north] {\scriptsize $\widetilde{M}_3$};
	\end{tikzpicture} \\{~}\\
	\begin{tikzpicture}[scale=3]
		\filldraw[fill=blue, fill opacity=.25, draw opacity=0] (0,0.7) -- (.2,.5) -- (.4,.7) -- (.9,.2) -- (1,0.3) -- (0.3,1) -- cycle;
		\foreach \x in {0,1}
			\draw[dashed, draw opacity=0.5] (\x,1.1) -- (\x,-0.1);
		\draw[blue] (.2,.5) -- (.4,.7);
		\draw[dashed, draw opacity=0.5] (0.4,0.7) -- (.9,.2);
		\draw[blue] (0,.7) -- (.3,1) -- (1,.3);
		\draw[dashed, draw opacity=0.5] (1,.3) -- (.7,0) -- (0,.7);
		\filldraw[blue] (0.4,0.7) circle[radius=0.015];
		\filldraw[blue] (.3,1) circle[radius=.015];
		\filldraw[fill=white, draw opacity=0.5] (0,.7) circle[radius=.015];
		\filldraw[fill=white, draw opacity=0.5] (1,.3) circle[radius=.015];
		\filldraw[fill=white, draw opacity=0.5] (.9,.2) circle[radius=.015];
		\filldraw[fill=white, draw opacity=0.5] (.2,.5) circle[radius=.015];
		\draw (.3,.99) node[anchor=south] {\scriptsize $(x,a)$};
		\draw (0.5,0) node[anchor=north] {\scriptsize $\widetilde{N}_1$};
	\end{tikzpicture}
	\qquad
	\begin{tikzpicture}[scale=3]
		\filldraw[fill=blue, fill opacity=.25, draw opacity=0] (0,.7) arc (230:270:1.47) -- (0.3,1) -- cycle;
		\foreach \x in {0,1}
			\draw[dashed, draw opacity=0.5] (\x,1.1) -- (\x,-0.1);
		\draw[dashed, draw opacity=0.5] (0,.7) arc(230:270:1.47) -- (1,0.3);
		\draw[blue] (0,.7) -- (.3,1) -- (0.94,0.36);
		\draw[dashed, draw opacity=0.5] (1,.3) -- (.7,0) -- (0,.7);
		\filldraw[blue] (.3,1) circle[radius=.015];
		\filldraw[fill=white, draw opacity=0.5] (0,.7) circle[radius=.015];
		\filldraw[fill=white, draw opacity=0.5] (0.94,0.36) circle[radius=.015];
		\draw (.3,.99) node[anchor=south] {\scriptsize $(x,a)$};
		\draw (0.5,0) node[anchor=north] {\scriptsize $\widetilde{N}_2$};
	\end{tikzpicture}
	\qquad
	\begin{tikzpicture}[scale=3]
		\filldraw[fill=blue, fill opacity=.25, draw opacity=0] (0.2,0.5) -- (0.4,0.7) -- (0.5,0.6) -- (0.55,0.65) -- (0.575,0.625) -- (0.5875,0.6375) -- (0.59375,.63125) -- (0.596875,0.624475) -- (0.5984375,0.6278625) -- (0.6,.625) -- (.675,.625) -- (0.3,1) -- (0,0.7) -- cycle;
		\foreach \x in {0,1}
			\draw[dashed, draw opacity=0.5] (\x,1.1) -- (\x,-0.1);
		\draw[dashed, draw opacity=0.5] (0.2,0.5) -- (0.4,0.7) -- (0.5,0.6) -- (0.55,0.65) -- (0.575,0.625) -- (0.5875,0.6375) -- (0.59375,.63125) -- (0.596875,0.624475) -- (0.5984375,0.6278625) -- (0.6,.625) -- (.675,.625);
		\draw[blue] (0,.7) -- (.3,1) -- (.675,.625);
		\draw[dashed, draw opacity=0.5] (.675,.625) -- (1,.3) -- (.7,0) -- (0,.7);
		\filldraw[blue] (.3,1) circle[radius=.015];
		\filldraw[fill=white, draw opacity=0.5] (0,.7) circle[radius=.015];
		\filldraw[fill=white, draw opacity=0.5] (.675,.625) circle[radius=.015];
		\draw (.3,.99) node[anchor=south] {\scriptsize $(x,a)$};
		\draw (0.5,0) node[anchor=north] {\scriptsize $\widetilde{N}_3$};
	\end{tikzpicture}
	\caption{On the top row, some examples of submodules of $\widetilde{P}_x=\Hom_{\DbAI}((x,a),-)$ to help us understand submodules of $P_x=\Hom_{\preI}(x,-)$. On the bottom row, some examples of quotient modules of $\widetilde{P}_x$ to help us understand quotient modules of $P_x$.
	For $i\in\{1,2,3\}$, the sequence $\widetilde{M}_i\hookrightarrow \widetilde{P}_x\twoheadrightarrow \widetilde{N}_i$ is a short exact sequence.}\label{fig:submodules of projectives}
	\end{center}\end{figure}
	
	Either by performing a dual argument to those we have just completed, or by examining $\DbAI$-module quotients of the form $\widetilde{P}_x /\widetilde{M}$, we can see that quotients of a $P_x$ are precisely the complements of the submodules we have described.
	There are examples of quotient modules of $P_x$ also contained in Figure~\ref{fig:submodules of projectives}.
	
	The following is a useful property about submodules and quotient modules of $P_x$, for some $x\in\II$.
	In order to not confuse the notation $\supp$ with ``supremum'', we write $\lub X$ to mean the least upper bound of $X$ (equivalently, the supremum of $X$).
	\begin{lemma}\label{lem:support of sub or quot modules of projectives is path connected}
		If $M$ is a nonzero submodule of $P_x$, then $\supp(M)$ is path connected as a interval in $\II$ and contains $\lub\II-x$.
		Similarly, if $N$ is a nonzero quotient module of $P_x$, then $\supp(N)$ is path connected as an interval in $\II$ and contains $x$.
	\end{lemma}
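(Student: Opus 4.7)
\textit{Plan.} The natural strategy is to work in the $\DbAI$-lift picture introduced before Lemma~\ref{lem:submodules of projectives are downward closed}. Lift $M$ to a subfunctor $\widetilde{M}$ of the representable $\widetilde{P}_{(x,a)} = \Hom_{\DbAI}((x,a), -)$, whose support is a diamond in $\II \times \RR$ with top vertex $(x,a)$; since $\supp(M)$ is the projection of $\supp(\widetilde{M})$ onto the $\II$-coordinate, both halves of the lemma reduce to geometric properties of $\supp(\widetilde{M})$ inside this diamond. The key structural observation, which follows immediately from Lemma~\ref{lem:submodules of projectives are downward closed}, is that $\supp(\widetilde{M})$ is closed under \emph{forward cones}: whenever $(y_0, b_0) \in \supp(\widetilde{M})$, every $(z, c)$ in the diamond with $c - b_0 \geq |z - y_0|$ also belongs to $\supp(\widetilde{M})$.

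For path-connectedness of $\supp(M)$, given $y_1 < y_2$ in $\supp(M)$ and $z \in (y_1, y_2)$, I would pick $(y_i, b_i) \in \supp(\widetilde{M})$ with $b_i$ equal to the minimal length in $M(y_i)$ (guaranteed by Lemma~\ref{lem:submodules of projectives are downward closed}). The forward cones from these two points meet above $z$ inside the diamond, producing some $(z, c) \in \supp(\widetilde{M})$ and hence $z \in \supp(M)$. For the containment $\lub\II - x \in \supp(M)$, the diamond attains its maximum $\RR$-coordinate $a + (\lub\II - x)$ along a (possibly degenerate) edge whose projection to $\II$ contains $\lub\II - x$; starting from any nonzero generator of $\widetilde{M}$ and iteratively post-composing with endomorphisms of its source (of lengths approaching $\boldell_{y_0}$, again by Lemma~\ref{lem:submodules of projectives are downward closed}), $\supp(\widetilde{M})$ is forced to reach this bottom edge of the diamond.

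The quotient statement $x \in \supp(N)$ is immediate, since $\boldsymbol{1}_x$ generates $P_x$: if $\boldsymbol{1}_x$ were in the kernel $M'$ defining $N = P_x/M'$, we would have $M' = P_x$ and $N = 0$. Path-connectedness of $\supp(N)$ follows by a dual argument, using that the lift $\widetilde{N}$ has support equal to the complement inside the diamond of $\supp(\widetilde{M'})$, and that this complement is downward-cone closed, hence projects onto an interval. I expect the main obstacle to be rigorously verifying the containment $\lub\II - x \in \supp(M)$: one must carefully show that the forward-cone closure from any generator actually reaches the specific point projecting to $\lub\II - x$, likely through a case analysis on whether $x$ lies to the left or right of the midpoint of $\II$ (so that the bottom edge of the diamond is located correctly).
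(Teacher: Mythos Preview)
Your proposal is correct and follows essentially the same strategy as the paper: both exploit the downward-closure of Lemma~\ref{lem:submodules of projectives are downward closed} together with the diamond geometry of $P_x$ to show that from any point of $\supp(M)$ one can reach $\lub\II-x$ and every point in between, whence $\supp(M)$ is an interval containing $\lub\II-x$. The paper works directly with path-like morphisms in $\preI$---choosing elements of near-maximal length $a_i-\varepsilon$ in $M(y)$ and $M(z)$ and running a three-case analysis on the position of $y,w,z$ relative to $\lub\II-x$---whereas you phrase the same computation via forward cones in the $\DbAI$ lift; one small simplification over your sketch is that a single composition with $\alpha^{(*)}_{(\lub\II-x),\,y_0}$ already lands nonzero in $M(\lub\II-x)$ (the length bound works out exactly), so the iterative step you anticipate for reaching the bottom of the diamond is unnecessary.
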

	\begin{proof}
		We will prove the the statement about submodules as the proof about quotient modules is similar.
		First, let $y\in \II$ such that $M(y)\neq 0$.
		Choose some path like morphism $f\in M(y)$.
		Without loss of generality, suppose $x < \lub\II-x$.		
		
		Suppose $y<w<z$ are points in $\II$ such that $M(y)$ and $M(z)$ are nonzero.
		Let $a_1=|x-y|+\boldell_{xy}$, $a_2=|x-z|+\boldell_{xz}$, and $b=|x-w|+\boldell_{xw}$.
		Then, there is some $\e_1>0$ such that there is a path like morphism in $M(y)$ of length $a_1-\e_1$.
		Similarly, there is some $\e_2$ such that there is a path like morphism in $M(z)$ of length $a_2-\e_2$.
		Let $\e=\min\{\e_1,\e_2\}$.
		Then there is a path like $f_1\in M(y)$ of length $a_1-\e$ and there is a path like $f_2\in M(z)$ of length $a_2-\e$.
		
		There are three cases: (1) $y< z\leq (\lub\II-x)$, (2) $(\lub\II-x)\leq y<z$, and (3) $y<(\lub\II-x)<z$.
		In case (1), we have $f_2= \alpha_{zy} f_1\in M(z)$ and $\alpha_{zy}=\alpha_{zw}\alpha_{wy}$ by our definition of $\II$.
		Thus, $\alpha_{wy}f_1\neq 0$ and $\alpha_{wy}f_1\in M(w)$.
		Similarly, for case (2), $0\neq \alpha^*_{wz}f_2\in M(w)$.
		
		Now consider case (3).
		Either (i) $y<w\leq (\lub\II-x)$ or (ii) $(\lub\II-x)\leq w<z$.
		If (i), then replace $z$ with $z'=\lub\II-x$ since $\alpha^*_{z'z}f_2\neq 0$ in $M(z')$.
		This reduces to case (1).
		If (ii), then replace $y$ with $y'=\lub\II-x$ since $\alpha_{y'y}f_1\neq 0$ in $M(y')$.
		This reduces to case (2).
		Thus, $\supp(M)$ is path connected and so a subinterval of $\II$.
	\end{proof}
	
	Let $x\in\II$.
	Suppose $M$ and $N$ are arbitrary sub- and quotient modules of $P_x$, respectively.
	We now define two functions $\upbnd:\II\cup\{\inf\II,\lub\II\}\to \RR$ and $\dwnbnd:\II\cup\{\inf\II,\lub\II\}\to\RR$ that we will use to define decorous sub- and quotint modules (Definition~\ref{def:decorous}) and sheet modules (Definition~\ref{def:sheet module}).\label{def:boundary functions}
	\begin{displaymath}
		\upbnd(y)= \begin{cases}
			\inf\{\text{length}(f) \mid f\in M(y)\text{ is path like}\} & M(y)\neq 0,y\in\II \\
			\lub\{\text{legnth}(f) \mid f\in \Hom(x,y)\text{ is path like}\} & M(y)=0,y\in\II \\
			x-\inf\II & y=\inf\II \\
			\lub\II -x & y=\lub\II.
		\end{cases}
	\end{displaymath}
	\begin{displaymath}
		\dwnbnd(y)= \begin{cases}
			\lub\{\text{length}(f)\mid f\in N(y)\text{ is path like}\} & N(y)\neq 0,y\in\II \\
			\inf\{\text{length}(f)\mid f\in \Hom(x,y) \text{ is path like}\} & N(y)=0,y\in\II \\
			x-\inf\II & y=\inf\II \\
			\lub\II -x & y=\lub\II.
		\end{cases}
	\end{displaymath}
	
	We now define decorous sub- and quotient modules.
	
	\begin{definition}\label{def:decorous}
		We say a submodule $M$ of $P_x$ is \emph{decorous}\footnote{\textbf{decorous}.\ adj.\ ``following the established traditions of refined society and good taste''. source: Merriam--Webster} if, for each $y\in\II$ such that $M(y)\neq 0$, there exists a path like $f\in M(y)$ of length $\upbnd(y)$.
		
		We say a quotient module $N$ of $P_x$ is \emph{decorous} if, for each $y\in\II$, there does not exist a path like $f\in N(y)$ of length $\dwnbnd(y)$.
	\end{definition}
	
	Notice that, for any $x\in\II$, $P_x$ is both a decorous submodule of itself and a decorous quotient module of itself.
	The 0 module is also always a decorous submodule and decorous quotient module of $P_x$.
	
	In Figure~\ref{fig:submodules of projectives}, the submodules of $P_x$ corresponding to $\widetilde{M}_2$ and $\widetilde{M}_3$ are decorous.
	The quotient modules of $P_x$ corresponding $\widetilde{N}_2$ and $\widetilde{N}_3$ are also decorous.
	However, neither of the $\II$-modules corresponding to $\widetilde{M}_1$ and $\widetilde{N}_1$ are decorous.
	In fact, this pairing always holds.
	
	\begin{proposition}\label{prop:decorous short exact sequence}
		Let $0\to M\to P_x\to N\to 0$ be exact.
		Then $M$ is decorous if and only if $N$ is decorous.
	\end{proposition}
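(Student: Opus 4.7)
The plan is to reduce the proposition to a pointwise biconditional at each $y\in\II$, using the structural description of $M(y)$ given by Lemma~\ref{lem:submodules of projectives are downward closed}. Fix $y\in\II$. Since $\II=(0,1)$ is open, the path like morphisms in $\Hom_{\preI}(x,y)$ are in bijection with their lengths, which fill the half-open interval $L(y) := [\,|x-y|,\ |x-y|+\boldell_{xy}\,)$, and form a basis. By Lemma~\ref{lem:submodules of projectives are downward closed}, the length set of $M(y)$ is upward-closed in $L(y)$, and so is one of three types: empty (precisely when $M(y)=0$); $[\upbnd(y),\max L(y))$ (precisely when $\upbnd(y)$ is attained in $M(y)$, i.e., $M$ is decorous at $y$); or $(\upbnd(y),\max L(y))$ (precisely when the infimum is not attained, i.e., $M$ is not decorous at $y$).

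Next I would identify the nonzero images of path like morphisms in $N(y)=\Hom_{\preI}(x,y)/M(y)$: a path like $f$ of $\Hom_{\preI}(x,y)$ has nonzero image in $N(y)$ iff its length does not lie in the length set of $M(y)$, and in that case its image has the same length as $f$. Reading off $\dwnbnd(y)$ in each of the three cases for $M(y)$ then gives: (i) if $M(y)=0$, the lengths appearing in $N(y)$ fill all of $L(y)$, so $\dwnbnd(y)=\max L(y)$ is not attained (by openness of $L(y)$), and $N$ is decorous at $y$; (ii) if $M$ is decorous at $y$ with $M(y)\neq 0$, the lengths appearing in $N(y)$ fill $[\,|x-y|,\upbnd(y))$, so $\dwnbnd(y)=\upbnd(y)$ is not attained, and $N$ is decorous at $y$ (including the subcase $\upbnd(y)=|x-y|$, where $N(y)=0$ and the condition holds vacuously); (iii) if $M$ is not decorous at $y$, the lengths appearing in $N(y)$ fill $[\,|x-y|,\upbnd(y)]$, so $\dwnbnd(y)=\upbnd(y)$ is attained, and $N$ is not decorous at $y$. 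Combining the three cases gives the pointwise biconditional, from which the global statement is immediate.

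The main thing to verify carefully is the non-attainment of $\dwnbnd(y)$ in case (i): this is exactly where the openness of $\II$—which makes $L(y)$ half-open at the top and prevents its supremum from being attained when $N(y)$ is all of $\Hom_{\preI}(x,y)$—is used in an essential way, and it is what aligns the two decorous conditions without exceptions at points outside $\supp(M)$. Beyond that, the argument is a careful unwinding of definitions: the extreme case $M(y)=\Hom_{\preI}(x,y)$ (so $N(y)=0$) is subsumed in case (ii) with $\upbnd(y)=|x-y|$, and $M$ is decorous there because the unique path like morphism of minimum length $|x-y|$, namely a scalar multiple of $\alpha^{(*)}_{yx}$, lies in $M(y)$.
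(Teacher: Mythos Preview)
Your argument is correct and follows essentially the same route as the paper's proof. The paper first establishes $\upbnd=\dwnbnd$ (by observing that a path like $f$ lies in $M(y)$ iff it does not lie in $N(y)$, and then comparing the inf/sup), after which the biconditional on decorousness follows from that same observation; your version unpacks this into an explicit three-case pointwise analysis and checks attainment/non-attainment directly, making the edge cases ($M(y)=0$ and $N(y)=0$) and the use of openness of $\II$ more visible.
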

	\begin{proof}
		For any path like $f\in\Hom_{\preI}(x,y)$, since $0\to M\to P_x\to N\to 0$ is exact, $f\in M(y)$ if and only if $f\notin N(y)$.
		We will show that if $0\to M\to P_x\to N\to 0$ is exact then $\upbnd=\dwnbnd$.
		The result then follows.
		
		For any $y\in\II$ and any path like $f\in \Hom_{\preI}(x,y)$ of length greater than $\upbnd(y)$, we know $f\in M(y)$.
		Thus, such an $f$ is not in $N(y)$.
		This shows that $\dwnbnd(y)\leq\upbnd(y)$ for all $y\in\II$.
		
		For contradiction, suppose $\dwnbnd(y) \lneq \upbnd(y)$ for some $y\in \II$.
		Let $f\in\Hom_{\preI}(x,y)$ be path like with length $\ell$ such that $\dwnbnd(y) < \ell < \upbnd(y)$.
		Then $f\notin N(y)$ and $f\notin M(y)$, a contradiction to exactness.
		Thus, $\dwnbnd=\upbnd$.
	\end{proof}
	
	Now we prove a useful fact about $\upbnd$ and $\dwnbnd$.
	
	\begin{proposition}\label{prop:boundaries and decorous modules}
		Let $P_x=\Hom_{\preI}(x,-)$ be an $\II$-module.
		Up to isomorphisms, there is a bijection between decorous submodules of $P_x$ and functions $\partial:\II\cup\{\inf\II,\lub\II\}\to \RR_{>0}$ such that $\partial(\inf\II)=x-\inf\II$, $\partial(\lub\II)=\lub\II-x$, and, for all $y\neq z\in\II$,
		\[
			|\partial(y) - \partial(z)|\leq |y-z|.
		\]
		Moreover, up to isomorphisms, there is a bijection between decorous quotient modules of $P_x$ and such functions.
	\end{proposition}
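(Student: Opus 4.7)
\emph{The plan} is to exhibit explicit inverse constructions. To a decorous submodule $M\subseteq P_x$, associate the function $\partial_M$ equal to $\upbnd_M$ on $\II$ and extended to $\{\inf\II,\lub\II\}$ via the stipulated boundary values. Conversely, to a Lipschitz function $\partial$ with the prescribed boundary values, associate the subfunctor $M_\partial\subseteq P_x$ defined by taking $M_\partial(y)$ to be the $\Bbbk$-span of the path like morphisms in $\Hom_{\preI}(x,y)$ of length at least $\partial(y)$.

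\emph{Forward direction.} The key input for verifying the Lipschitz condition on $\partial_M$ is the compatibility between length and composition. For $y,z\in\II$, pick a path like $f\in M(y)$ of length $\ell$ close to $\upbnd_M(y)$. Either $\alpha^{(*)}_{zy}\circ f$ is nonzero, in which case it is a path like element of $M(z)$ of length $\ell+|y-z|$, giving $\upbnd_M(z)\le\upbnd_M(y)+|y-z|$ directly; or $\alpha^{(*)}_{zy}\circ f=0$, in which case $\ell+|y-z|$ already exceeds the maximum length available in $\Hom_{\preI}(x,z)$, and hence also exceeds $\upbnd_M(z)$, so the inequality holds anyway. The symmetric argument yields $|\upbnd_M(y)-\upbnd_M(z)|\le|y-z|$. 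Continuity across the boundary endpoints follows since the allowable length range in $\Hom_{\preI}(x,y)$ collapses to the single value $x-\inf\II$ (respectively $\lub\II-x$) as $y\to\inf\II$ (respectively $y\to\lub\II$), forcing $\upbnd_M(y)$ to converge to the stipulated boundary value.

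\emph{Reverse direction.} Given $\partial$ Lipschitz with the boundary values, substituting the two endpoints into the Lipschitz inequality confines $\partial(y)$ to the closed interval bracketed by the minimum and maximum possible lengths of morphisms in $\Hom_{\preI}(x,y)$. Hence whenever the supremum of this interval is actually achieved in $\Hom_{\preI}(x,y)$, so is $\partial(y)$. To see $M_\partial$ is stable under the $\preI$-action, take $f$ path like of length $\ell\ge\partial(y)$ and $g\in\Hom_{\preI}(y,z)$ path like of length $m\ge|y-z|$; either $gf=0$, or $gf$ is path like of length $\ell+m\ge\partial(y)+|y-z|\ge\partial(z)$ and hence lies in $M_\partial(z)$. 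Decorousness follows because whenever $M_\partial(y)\ne 0$ the path like morphism of length exactly $\partial(y)$ lies in $M_\partial(y)$ and attains $\upbnd_{M_\partial}(y)$.

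\emph{Inverse identity and quotients.} The composite $M\mapsto\partial_M\mapsto M_{\partial_M}$ recovers $M$: $M_{\partial_M}(y)$ is spanned by all path like morphisms of length at least $\upbnd_M(y)$, and decorousness combined with Lemma~\ref{lem:submodules of projectives are downward closed} forces this span to equal $M(y)$. The composite $\partial\mapsto M_\partial\mapsto\partial_{M_\partial}$ recovers $\partial$: on $\II$ the identity is immediate except in the edge case that $\partial(y)$ equals the unachievable supremum of the length range, where $M_\partial(y)=0$ and the convention $\upbnd(y)=\lub\{\text{lengths}\}$ still gives back $\partial(y)$. The bijection for decorous quotient modules then follows by composing with $N\mapsto\ker(P_x\twoheadrightarrow N)$, which is a bijection between decorous quotients and decorous submodules by Proposition~\ref{prop:decorous short exact sequence} and which satisfies $\dwnbnd_N=\upbnd_{\ker}$. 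The main obstacle is the careful bookkeeping in the edge cases: the zero-composition case in the Lipschitz verification, and the extremal case where $\partial(y)$ sits at the unattained endpoint of the allowable length range; both hinge on comparing $\partial(y)$ against the maximum length in $\Hom_{\preI}(x,y)$, and on whether $\bou_{xy}\in\II$.
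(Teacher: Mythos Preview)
Your approach is essentially the same as the paper's: both directions are set up via $M\mapsto\upbnd_M$ and $\partial\mapsto M_\partial$, and the quotient statement is reduced to the submodule statement through Proposition~\ref{prop:decorous short exact sequence}. Your reverse direction is in fact slightly cleaner than the paper's, which verifies that $M_\partial$ is a submodule by passing through the lift to $\DbAI$ and invoking Lemma~\ref{lem:submodules of projectives are downward closed}, whereas you check closure under the $\preI$-action directly from the Lipschitz inequality.

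There is one small gap in the forward direction. Your Lipschitz verification begins by picking a path like $f\in M(y)$; this presupposes $M(y)\neq 0$. When $M(y)=0$, by definition $\upbnd_M(y)=L(y)$, the supremum of lengths in $\Hom_{\preI}(x,y)$, and your argument does not apply as written. The fix is straightforward and fits your framework: if $M(y)=0$ and $M(z)\neq 0$, run your argument from the $z$ side to get a path like element of $M(y)$ unless the composite vanishes, which forces $\upbnd_M(z)+|y-z|\geq L(y)=\upbnd_M(y)$; the reverse inequality follows since $\upbnd_M(z)\leq L(z)$ and $L$ itself is $1$-Lipschitz. If $M(y)=M(z)=0$, the inequality is just the Lipschitz property of $L$. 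The paper handles this same issue differently, via a case split on the position of $y,z$ relative to $\lub\II-x$ together with a limit argument at the boundary of $\supp(M)$ (using Lemma~\ref{lem:support of sub or quot modules of projectives is path connected}); your uniform ``compose and see if it vanishes'' treatment is tidier once the $M(y)=0$ case is made explicit.
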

	\begin{proof}
		By Proposition~\ref{prop:decorous short exact sequence}, we know that decorous submodules of $P_x$ are in bijection with decorous quotient modules of $P_x$, up to isomorphisms.
		Thus, we only prove the bijection regarding submodules.
		Moreover, the statement is easily checked if $M=P_x$ or $M=0$.
		
		Let $M$ be a proper decorous nonzero submodule of $P_x$ with function $\upbnd$.
		We claim that $\upbnd$ is a function as described in the lemma.
		By definition, $\upbnd(\inf\II)=x-\inf\II$ and $\upbnd(\lub\II)=\lub\II-x$.
		Thus, we need only prove the inequality.
		Let $y \neq z\in \II$.
		If $M(y)=0=M(z)$ then we already know the inequality is true by definition.
		Recall that, for each $w\in \II$, 
		\[
			\boldell_{wx}=\lub\{\len(f)\mid f \in\Hom(x,w)\text{ is path like}\}.
		\]
		Define $L(w)=\boldell_{wx}$ and, for completeness, let $L(\inf\II)=\upbnd(x)$ and $L(\lub\II)=\upbnd(\lub\II)$.
		Then we have a continuous function $L:\II\cup\{\inf\II,\lub\II\}\to \RR_{>0}$.
		
		Suppose $M(y)\neq 0\neq M(z)$.
		By Proposition~\ref{lem:support of sub or quot modules of projectives is path connected}, we know that for any $w\in \II$ such that $y<w<z$, we have $M(w)\neq 0$.
		Thus, for all $y\leq w\leq z$, we know $L(w)-\upbnd(w)>0$.
		
		If $y<z<\lub\II-x$, then we know the slop from $(y,L(y))$ to $(z,L(z))$ is $1$.
		Thus, we have $\upbnd(y)<L(z)\leq \upbnd(y)+|y-z|$.
		Next, let $f_y\in M(y)$ be path like with length $\upbnd(y)$.
		Then we know $\alpha^{(*)}_{zy}f_y$ has length $\upbnd(y)+|y-z|$.
		Since $M$ is a submodule of a projective module, we must have $\alpha^{(*)}_{zy}f\neq 0$ in $M(z)$ and so $\upbnd(z)\leq \upbnd(y)+|y-z|$.
		Therefore $|\upbnd(y)-\upbnd(z)|\leq |y-z|$.
		The case where $(\lub\II-x) < y< z$ is similar to this case.
		
		Now we consider if $y<(\lub\II-x) < z$.
		In this case, $\upbnd(y)+|y-z| > L(z)$ and so $\upbnd(z)\leq \upbnd(y)+|y-z|$.
		Similarly, $\upbnd(y) \leq \upbnd(z) + |y-z|$.
		This yields the inequalities
		\begin{align*}
			\upbnd(z) - \upbnd(y) &\leq |y-z| &	\upbnd(y) - \upbnd(z) &\leq |y-z|.
		\end{align*}
		Thus, $|\upbnd(y)-\upbnd(z)|\leq |y-z|$ as desired and so the inequality holds on $\supp(M)$.
		Consequently, $\upbnd$ is continuous on $\supp(M)$.
		
		Since $M$ is nonzero, $\supp(M)\neq\emptyset$.
		Next we prove
		\begin{align*}
			\displaystyle\lim_{y\to\inf\supp(M)}\upbnd(y)&=L(\inf\supp(M)) \\
			\displaystyle\lim_{y\to \lub\supp(M)}\upbnd(y)&=L(\lub\supp(M)).
		\end{align*}
		Since $L$ is continuous, $L$ satisfies the inequality, $\upbnd$ is defined to be $L$ outside the support of $M$, and $\supp(M)$ is connected (Proposition~\ref{lem:support of sub or quot modules of projectives is path connected}), this is sufficient show that $\upbnd$ is continuous and has the proper inequality.
		
		We consider only $\displaystyle\lim_{y\to\inf\supp(M)}\upbnd(y)$ as the consideration of $\displaystyle\lim_{y\to \lub\supp(M)}\upbnd(y)$ is similar.
		We know that if $y\geq\lub\II-x$ such that $y\in\supp(M)$ then, by the same arguments as before, we must have $\lub\II-x\in \supp(M)$.
		Thus, $\inf\II \leq \inf\supp(M) < \lub\II-x$.
		
		For contradiction, suppose $\inf\supp(M)\in\supp(M)$.
		Then we know $\upbnd(\inf\supp(M)) < L(\inf\supp(M))$.
		So there is some $\e>0$ such that $L(\inf\supp(M)-\e) < \upbnd(\inf\supp(M))+\e$.
		Let $f\in M(\inf\supp(M))$ be path like of length $\upbnd(\inf\supp(M))$ and let $f':\inf\supp(M)\to \inf\supp(M)-\e$ be path like in $\II$ with length $\e$.
		So, $f'f\neq 0$ and has length $\upbnd(\inf\supp(M))+\e < L(\inf\supp(M)-\e)$.
		This is a contradiction since $M(\inf\supp(M)-\e)=0$.
		
		Now let $\e>0$ be small and let $\delta>0$ such that $\inf\supp(M)+\delta\in\supp(M)$ and $\delta<\e$.
		Let $y\in\supp(M)$ such that $\inf\supp(M)<y<\inf\supp(M)+\delta$.
		Then $\upbnd(y)<L(y)$ and so $\upbnd(y) - L(\inf\supp(M)) < \delta< \e$, possibly negative.
		
		For contradiction, suppose $\upbnd(y) \leq L(\inf\supp(M)) - \delta$.
		Then let $f\in M(y)$ be the path like with length $\upbnd(y)$.
		Let $f':y\to \inf\supp(M)$ be path like with length $|y-\inf\supp(M)| < \delta$.
		Then $f'f$ is nonzero in $\II$ and thus must be in $M(\inf\supp(M))$, a contradiction since we know $\inf\supp(M)\notin\supp(M)$.
		Then, $L(\inf\supp(M)) -\delta < \upbnd(y) < L(\inf\supp(M)) + \delta$.
		Thus, $|\upbnd(y)-L(\inf\supp(M))|<\delta<\e$.
		Therefore, $\upbnd$ satisfies the desired inequality, is continuous, and is defined as desired at $\inf\II$ and $\lub\II$.
		
		Now suppose $\partial$ is some function as described in the lemma.
		For each $y\in \II$, let
		\[
			M(y) = \Bbbk\langle \text{path like }f\in\Hom_{\preI}(x,y) \mid \partial(y) \leq \len(f) < L(y) \rangle.
		\]
		We will show that this defines a decorous submodule of $P_x$.
		Consider $\widetilde{M}$ defined by $(y,b)\in \supp(\widetilde{M})$ if and only if there is a path like $f\in M(y)$ with length $b$.
		This yields a submodule $\widetilde{M}$ of $\widetilde{P}_{x}=\Hom_{\DbAI}((x,0),-)$.
		By Lemma~\ref{lem:submodules of projectives are downward closed}, this shows that $M$ is indeed a submodule of $P_x$.
		By Definition~\ref{def:decorous}, $M$ is decorous.
		Thus, given a function $\partial$ with the desired properties, we obtain a decorous submodule $M$ of $P_x$.
		
		Notice that if $\partial\neq \partial'$ then there is some $y\in\II$ such that $\partial(y)<\partial'(y)$ or $\partial(y)>\partial'(y)$.
		Then the $M(y)$ we construct and the $M'(y)$ we construct will differ.
		Similarly, if $M\not\cong M'$ are submodules of $P_x$, then there must be some $y\in \II$ such that $M(y)\neq M'(y)$.
		Since both $M$ and $M'$ are submodules, this forces $\upbnd(y)\neq \upbnd'(y)$.
		Thus, we indeed have a bijection.
	\end{proof}
	
	In the following proposition, we use that $\II=(0,1)$ for ease of notation.
	\begin{proposition}\label{prop:boundary functions determine decorous submodules}
		Let $\partial:[0,1]\to\RR$ be a function such that $|\partial(y)-\partial(z)|\leq |y-z|$, for all $y,z\in[0,1]$, and $\partial(1) \neq \partial(0) \pm 1$.
		Then $\partial$ uniquely determines a decorous submodule of $P_x$ where $x=\frac{1}{2}(1+\partial(0)-\partial(1))$.
	\end{proposition}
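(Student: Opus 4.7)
The plan is to reduce the statement to Proposition~\ref{prop:boundaries and decorous modules} by translating $\partial$ vertically so that its boundary values match those required by the earlier proposition for the prescribed $x$.

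First I would check that $x := \tfrac{1}{2}(1 + \partial(0) - \partial(1))$ lies in $\II = (0,1)$. The Lipschitz hypothesis applied to the pair $0, 1 \in \overline{\II}$ gives $|\partial(0) - \partial(1)| \leq 1$, so $x \in [0,1]$; the extra assumption $\partial(1) \neq \partial(0) \pm 1$ excludes exactly the two endpoints $x = 0$ and $x = 1$, so in fact $x \in \II$.

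Next I would produce the function that feeds into Proposition~\ref{prop:boundaries and decorous modules}. That proposition requires a function $\partial'$ on $\overline{\II}$ with $\partial'(0) = x$, $\partial'(1) = 1 - x$, and the same Lipschitz bound. Solving for a constant shift $c$ so that $\partial + c$ hits both prescribed boundary values forces $c = \tfrac{1}{2}(1 - \partial(0) - \partial(1))$, and the very definition of $x$ makes both boundary conditions hold simultaneously. Setting $\partial' := \partial + c$ preserves the Lipschitz condition since translation does not affect differences, so Proposition~\ref{prop:boundaries and decorous modules} produces a decorous submodule $M$ of $P_x$ whose $\upbnd$ equals $\partial'$, unique up to isomorphism. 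Uniqueness transfers back to the original $\partial$ because the shift $c$ is determined by $\partial$ alone.

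There is no substantive obstacle, only bookkeeping: the essential observation is that among the vertical translates of a Lipschitz profile $\partial$, exactly one has boundary values of the form $(x, 1-x)$ for some $x \in \overline{\II}$, and the exclusion $\partial(1) \neq \partial(0) \pm 1$ is precisely what keeps that $x$ strictly inside $\II$.
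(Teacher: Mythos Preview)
Your proposal is correct and follows essentially the same approach as the paper: both define the vertical shift $\partial' = \partial + c$ with $c = x - \partial(0) = \tfrac{1}{2}(1-\partial(0)-\partial(1))$, verify $\partial'(0)=x$ and $\partial'(1)=1-x$, and then invoke Proposition~\ref{prop:boundaries and decorous modules}. You are slightly more thorough in explicitly verifying that $x\in(0,1)$, which the paper leaves implicit.
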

	\begin{proof}
		First let $\partial:[0,1]\to \RR$ be a function satisfying the hypotheses in the first half of the proposition.
		Let $x=\frac{1}{2}(1+\partial(0)-\partial(1))$ and let $\partial'(y)=\partial(y)-\partial(0)+x$, for all $y\in[0,1]$.
		In particular, $\partial'(0)=x$ and $\partial'(1)=1-x$.
		By Proposition~\ref{prop:boundaries and decorous modules}, this uniquely determines a submodule $M$ of $P_x$.
	\end{proof}
	
	Now we have the following corollary to use later on.
	
	\begin{corollary}\label{cor:permuton functions and decorous submodules}
		Let $\boldsymbol{\partial}$ be the set of all $\partial:[0,1]\to\RR$ such that $|\partial(y)-\partial(z)|\leq |y-z|$, for all $y,z\in[0,1]$, and $\partial(1) \neq \partial(0) \pm 1$.
		Then there is a bijection
		\[
			\xymatrix{
				\boldsymbol{\partial}/\{\partial\sim\partial' \text{ if } (\exists a\in\RR) (\forall y\in[0,1]) (\partial(y)=\partial'(y)+a)\}
				\ar@{<->}[d] \\
				\{\text{decorous submodules of representable projectives}\}.
			}
		\]
	\end{corollary}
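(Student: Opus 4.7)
The plan is to construct the bijection by explicitly describing the map in both directions and then checking that translations by constants form exactly the correct equivalence on the domain. Given $\partial\in\boldsymbol{\partial}$, define $x_\partial=\tfrac{1}{2}(1+\partial(0)-\partial(1))$ and $\bar{\partial}(y)=\partial(y)-\partial(0)+x_\partial$. The Lipschitz condition forces $|\partial(1)-\partial(0)|\leq 1$, and the assumption $\partial(1)\neq\partial(0)\pm 1$ upgrades this to a strict inequality, which is exactly what guarantees $x_\partial\in(0,1)$. A short computation shows $\bar{\partial}(0)=x_\partial$ and $\bar{\partial}(1)=1-x_\partial$, so Proposition~\ref{prop:boundary functions determine decorous submodules} (equivalently, a direct application of Proposition~\ref{prop:boundaries and decorous modules} at the point $x_\partial$) associates to $\partial$ a unique decorous submodule of $P_{x_\partial}$. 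Call this map $\Phi$.

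Next I would verify that $\Phi$ is constant on equivalence classes. If $\partial'(y)=\partial(y)+a$, then the constant cancels in the difference $\partial(0)-\partial(1)$, so $x_{\partial'}=x_\partial$; it also cancels between $\partial(y)$ and $\partial(0)$ in the normalization, giving $\bar{\partial'}=\bar{\partial}$. Hence $\Phi(\partial')=\Phi(\partial)$, so $\Phi$ descends to a well-defined map $\overline{\Phi}$ on $\boldsymbol{\partial}/{\sim}$.

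For surjectivity, given any decorous submodule $M$ of a representable projective $P_x$, Proposition~\ref{prop:boundaries and decorous modules} produces a Lipschitz function $\upbnd_M\colon[0,1]\to\RR_{>0}$ with $\upbnd_M(0)=x$, $\upbnd_M(1)=1-x$. Since $x\in(0,1)$, the values satisfy $\upbnd_M(1)-\upbnd_M(0)=1-2x\in(-1,1)$, so $\upbnd_M\in\boldsymbol{\partial}$. Feeding $\upbnd_M$ back into the recipe gives $x_{\upbnd_M}=x$ and $\bar{\upbnd}_M=\upbnd_M$, so $\Phi(\upbnd_M)=M$, and $\overline{\Phi}$ hits $M$. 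For injectivity, suppose $\Phi(\partial)=\Phi(\partial')$; both submodules live inside the same $P_x$, so $x_\partial=x_{\partial'}$, and Proposition~\ref{prop:boundaries and decorous modules} forces $\bar{\partial}=\bar{\partial'}$. Subtracting the definitions of the normalizations gives $\partial(y)-\partial'(y)=\partial(0)-\partial'(0)$ for all $y$, a constant, so $\partial\sim\partial'$.

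The only genuine content beyond bookkeeping is the translation between the condition $\partial(1)\neq\partial(0)\pm 1$ and the openness condition $x\in(0,1)$; once that equivalence is isolated the rest of the argument is a short verification that the normalization map $\partial\mapsto(x_\partial,\bar{\partial})$ is exactly the quotient by the translation action. I do not anticipate any serious obstacle beyond making sure the endpoint values match on both sides of the correspondence, which is a direct calculation.
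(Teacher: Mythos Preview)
Your proof is correct and follows exactly the same approach as the paper, which simply says ``Combine Propositions~\ref{prop:boundaries and decorous modules}~and~\ref{prop:boundary functions determine decorous submodules}.'' You have written out in detail the normalization $\partial\mapsto(x_\partial,\bar\partial)$ implicit in Proposition~\ref{prop:boundary functions determine decorous submodules} and verified well-definedness, surjectivity, and injectivity explicitly, which is precisely what ``combining'' those two propositions amounts to.
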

	\begin{proof}
		Combine Propositions~\ref{prop:boundaries and decorous modules}~and~\ref{prop:boundary functions determine decorous submodules}.
	\end{proof}
	
	In light of the corollary, we note that the natural representative of each equivalence class is the one where $\partial(0)+\partial(1)=1$, for the following reason.
	Consider the submodule $M$ determined by such a function.
	Then, the function $\upbnd$ determined by $M$, using Proposition~\ref{prop:boundaries and decorous modules}, is precisely $\partial$.
	
\section{Sheet modules and their $\Hom$ spaces}\label{sec:sheets}	
	
	In this subsection we define sheet modules and study the $\Hom$ spaces between them.
	This is the beginning of understanding more general $\Hom$ spaces between $\preI$-modules.
	We continue to consider modules as functors $\preI\to\kVec$, which coincides with left modules, but these are equivalent to right modules since $\preI$ is canonically isomorphic to $\preI^{\text{op}}$ (Remark~\ref{rmk:preI and preIop}).

	\begin{definition}\label{def:sheet module}
		A \emph{sheet module} is an $\preI$-module isomorphic to the image of a composition $M\hookrightarrow P_x\twoheadrightarrow N$ where
		\begin{itemize}
			\item $M\hookrightarrow P_x$ is the canonical inclusion,
			\item $P_x\twoheadrightarrow N$ is the canonical projection, and
			\item both $M$ and $N$ are decorous (Definition~\ref{def:decorous}).
		\end{itemize}
	\end{definition}
	For technical reasons, we allow the $0$ module to be a sheet module.
	The reason we call these sheet modules is that they are a general form of modules constructed from decorous modules that can easily be reinterpreted as a thin $\DbAI$-modules.
	
	First we will describe sheet modules as $\preI$-modules and then we will reinterpret them as $\DbAI$-modules.
	When the sheet module is the $0$ module this is trivial
	.
	Thus, we assume the sheet module is nonzero.
	
	Let $M$ be a submodule of $P_x$ and $N$ a quotient module of $P_x$ such that the composition of the canonical inclusion and projection $M\hookrightarrow P_x \twoheadrightarrow N$ is nonzero.
	Let $\upbnd$ and $\dwnbnd$ be as defined on page~\pageref{def:boundary functions}.
	Let $S$ be the image of this composition.
	
	Let $y\in\II$ and suppose $\upbnd(y)<\dwnbnd(y)$.
	If $f\in \Hom_{\preI}(x,y)$ is a path like morphism of length $\ell$ such that $\upbnd(y)\leq\ell<\dwnbnd(y)$, then $f\in S(y)$ by definition.
	If $\upbnd(y)\geq \dwnbnd(y)$ then $S(y)=0$.
	
	Given a choice of lift $\widetilde{P}_x$ of $P_x$, we can draw $\widetilde{S}$ as the overlap in support of $\widetilde{M}$ and $\widetilde{N}$.
	This can be seen in Figure~\ref{fig:sheet}.
	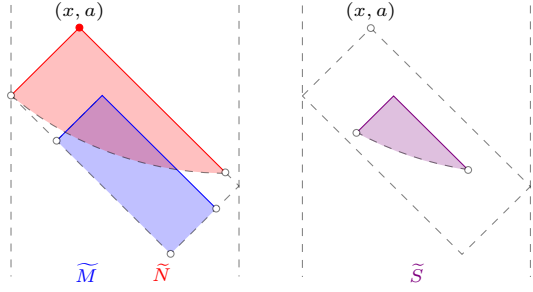
\begin{figure}\begin{center}\begin{tikzpicture}[scale=3]
		\filldraw[fill=blue, fill opacity=.25, draw opacity=0] (.7,0) -- (.2,.5) -- (.4,.7) -- (.9,.2) -- cycle;
		\filldraw[fill=red, fill opacity=.25, draw opacity=0] (0,.7) arc (230:270:1.47) -- (0.3,1) -- cycle;
		\foreach \x in {0,1}
			\draw[dashed, draw opacity=0.5] (\x,1.1) -- (\x,-0.1);
		\draw[dashed, draw opacity=0.5] (.2,.5) -- (.4,.7);
		\draw[blue] (0.2,0.5) -- (0.4,0.7) -- (.9,.2);
		\draw[dashed, draw opacity=0.5] (0,.7) arc (230:270:1.47);
		\draw[red] (0,.7) -- (.3,1) -- (0.94,0.36);
		\draw[dashed, draw opacity=0.5] (0.94,0.36) -- (1,.3) -- (.7,0) -- (0,.7);
		\filldraw[red] (.3,1) circle[radius=.015];
		\filldraw[fill=white, draw opacity=0.5] (.7,0) circle[radius=.015];
		\filldraw[fill=white, draw opacity=0.5] (.9,.2) circle[radius=.015];
		\filldraw[fill=white, draw opacity=0.5] (.2,.5) circle[radius=.015];
		\filldraw[fill=white, draw opacity=0.5] (0,.7) circle[radius=.015];
		\filldraw[fill=white, draw opacity=0.5] (0.94,0.36) circle[radius=.015];
		\draw (.3,.99) node[anchor=south] {\scriptsize $(x,a)$};
		\draw[blue] (0.33,0) node[anchor=north] {\scriptsize $\widetilde{M}$};
		\draw[red] (0.66,0) node[anchor=north] {\scriptsize $\widetilde{N}$};
	\end{tikzpicture}
	\qquad
	\begin{tikzpicture}[scale=3]
		\foreach \x in {0,1}
			\draw[dashed, draw opacity=0.5] (\x,1.1) -- (\x,-0.1);
		\draw[dashed, draw opacity=0.5] (0,0.7) -- (0.3,1) -- (1,0.3) -- (0.7,0) -- (0,0.7);
		\begin{scope}
			\clip (0,.7) arc (230:270:1.47) -- (.4,.7)--(0,.7);
			\clip (0.2,0.5) -- (0.4,0.7) -- (0.9,0.2) -- (0.4,0) -- (0.2,0.5);
			\filldraw[fill = red!50!blue, fill opacity =0.25, draw opacity =0] (0,0) -- (0,1) -- (1,1) -- (1,0) -- cycle;
			\draw[dashed, draw opacity=0.5, thick] (0,.7) arc (230:270:1.47);
		\end{scope}
		\draw[red!50!blue] (0.237,0.537) -- (0.4,0.7) -- (0.727,0.373);
		\filldraw[fill=white, draw opacity =.5] (0.237,0.537) circle[radius=0.015];
		\filldraw[fill=white, draw opacity =.5] (0.727,0.373) circle[radius=0.015];
		\filldraw[fill=white, draw opacity =.5] (0.3,1) circle[radius=0.015];
		\draw (.3,.99) node[anchor=south] {\scriptsize $(x,a)$};
		\draw[red!50!blue] (0.5,0) node[anchor=north] {\scriptsize $\widetilde{S}$};
	\end{tikzpicture}
	\caption{In blue on the left, the supprt of the submodule $\widetilde{M}$ of $\widetilde{P}_x=\Hom_{\DbAI}((x,a),-)$. In red on the left, the supprt of the quotient module $\widetilde{N}$ of $\widetilde{P}_x$. The purple overlap is the support of $\widetilde{S}$. On the right, we draw the supprt of $\widetilde{S}$ in purple.}\label{fig:sheet}
	\end{center}\end{figure}
	
	\begin{remark}\label{rmk:sheets may be decomposable}
		A sheet may not be indecomposable.
		Consider the example where $x=\frac{\lub\II-\inf\II}{2}$, $M$ is determined by two curves, and $N$ is determined by 1 minus those curves.
		Then the sheet module constructed from $M$ and $N$ can be written as the direct sum of at least two non-isomorphic nonzero summands.
		The picture is in Figure~\ref{fig:decomposable sheet}.
		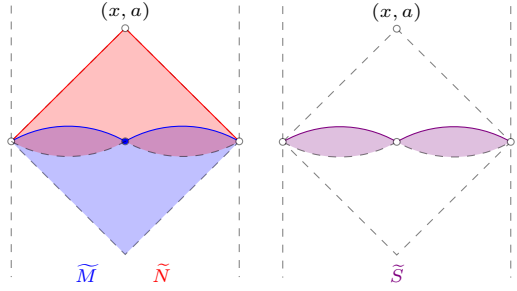
\begin{figure}\begin{center}
		\begin{tikzpicture}[scale=3]
			\foreach \x in {0,1}
				\draw[dashed, draw opacity=0.5] (\x,1.1) -- (\x,-0.1);
			\draw[dashed, draw opacity=0.5] (0,0.5) -- (0.5,1) -- (1,0.5) -- (0.5,0) -- (0,0.5);
			\filldraw[blue, draw opacity=0, fill opacity=0.25] (0,0.5) arc (120:60:0.5) arc (120:60:0.5) -- (0.5,0) -- cycle;
			\filldraw[red, draw opacity=0, fill opacity=0.25] (0,0.5) arc (240:300:0.5) arc (240:300:0.5) -- (0.5,1) -- cycle;
			\draw[red] (0,0.5) -- (0.5,1) -- (1,0.5);
			\draw[blue] (0,0.5) arc (120:60:0.5) arc (120:60:0.5);
			\filldraw[fill=blue, draw opacity =.5] (0.5,0.5) circle[radius=0.015];
			\foreach \x in {0 , 1}
				\filldraw[fill=white, draw opacity=0.5] (\x,0.5) circle[radius=0.015];
			\draw[draw opacity=0.5, dashed] (0,0.5) arc (240:300:0.5) arc (240:300:0.5);
			\filldraw[fill=white, draw opacity=0.5] (0.5,1) circle[radius=0.015];
			\draw (.5,.99) node[anchor=south] {\scriptsize $(x,a)$};
			\draw[blue] (0.33,0) node[anchor=north] {\scriptsize $\widetilde{M}$};
			\draw[red] (0.66,0) node[anchor=north] {\scriptsize $\widetilde{N}$};
		\end{tikzpicture}
		\quad
		\begin{tikzpicture}[scale=3]
			\foreach \x in {0,1}
				\draw[dashed, draw opacity=0.5] (\x,1.1) -- (\x,-0.1);
			\draw[dashed, draw opacity=0.5] (0,0.5) -- (0.5,1) -- (1,0.5) -- (0.5,0) -- (0,0.5);
			\filldraw[blue!50!red, fill opacity =0.25, draw opacity=0] (0,0.5) arc (120:60:0.5) arc (120:60:0.5) arc (300:240:0.5) arc (300:240:0.5);
			\draw[red!50!blue]  (0,0.5) arc (120:60:0.5) arc (120:60:0.5);
			\draw[draw opacity=0.5, dashed] (0,0.5) arc (240:300:0.5) arc (240:300:0.5);
			\foreach \x in {0, 0.5, 1}
				\filldraw[fill=white, draw opacity=0.5] (\x,0.5) circle[radius=0.015];
			\filldraw[fill=white, draw opacity=0.5] (0.5,1) circle[radius=0.015];
			\draw (.5,.99) node[anchor=south] {\scriptsize $(x,a)$};
			\draw[red!50!blue] (0.5,0) node[anchor=north] {\scriptsize $\widetilde{S}$};
		\end{tikzpicture}
		\caption{An example of a decomposable sheet $S$ constructed from $M$ and $N$ and lifted to $\widetilde{S}$, $\widetilde{M}$, and $\widetilde{N}$. The left and right bubbles are direct summands of $\widetilde{S}$ and show us that $S$ decomposes into a left piece and right piece.}\label{fig:decomposable sheet}
		\end{center}\end{figure}
	\end{remark}
	
	For each sheet module $S$, we wish to define a subset $\gen(S)\subset\supp(M)$ of points such that, for every element $\sum_{i=1}^m \lambda_i f_i$ in $S$, we have $f_i=gf_y$ for some morphism $g$ in $\preI$ and some $y\in\gen(S)$.
	We think of $\gen(S)$ as the generators of $S$.
	
	\begin{notation}\label{note:f sub y}
		Let $S$ be a sheet module constructed from $M\hookrightarrow P_x\twoheadrightarrow N$.
		For each $y\in\II$ such that $M(y)\neq 0$, let $f_y$ be the path like element of $S(y)$ whose length is $\upbnd(y)$.
	\end{notation}
	
	\begin{definition}\label{def:gen S}
		We say $y\in\gen(S)\subseteq\supp(S)$ if and only if, for all $z\in\supp(S)\setminus\{y\}$, we have $|y-z|>\upbnd(y)-\upbnd(z)$.
		In particular, this means that there is no $z\in\II$ and no morphism $g:z\to y$ in $\preI$ such that $f_y=gf_z$.
	\end{definition}
	
	For a projective $P_x=\Hom_{\preI}(x,-)$, we know $\gen(P_x)=\{x\}$.
	
	Given two sheet modules $S$ and $S'$, we wish to understand $\Hom_{\ReppreI}(S,S')$.
	To do this, we will construct some auxiliary sets.
	
	Suppose $S$ comes from $M\hookrightarrow P_x\twoheadrightarrow N$ and $S'$ comes from $M'\hookrightarrow P_{x'}\twoheadrightarrow N'$.
	Recall we also have the functions $\upbnd$ and $\dwnbnd$ for $S$ and $\upbnd'$ and $\dwnbnd'$ for $S'$.
	
	We first fix a real number $a$.
	Next, we define a set $C_a(y)\subset\II\times\RR$ for each $y\in\supp(S)$.
	This can be though of as a cone from $(y,a)$ intersected with support of a lift $\widetilde{S}'$ of $S'$ constructed from $\widetilde{M}\hookrightarrow\widetilde{P}_{(x',0)}\twoheadrightarrow \widetilde{N}$.
	We say $(z,b)\in C_a(y)$ if there exists a path like $g:x'\to z$ in $\II$ such that
	\begin{enumerate}
		\item $g\in S'(z)$,
		\item $g$ has length $b$,
		\item and $b-(a+\upbnd(y)) \geq |y-z|$. (Notice the lack of absolute value on the left.)
	\end{enumerate}
	In particular, $(y,a+\upbnd(y))\in C_a(y)$ if there is a path like morphism of length $a+\upbnd(y)$ in $S'(y)$.
	Furthermore, $(y,b)\notin C_a(y)$ if $b< a+\upbnd(y)$.
	A picture of example $C_a(y)$'s can be seen in Figure~\ref{fig:positive cone}.
	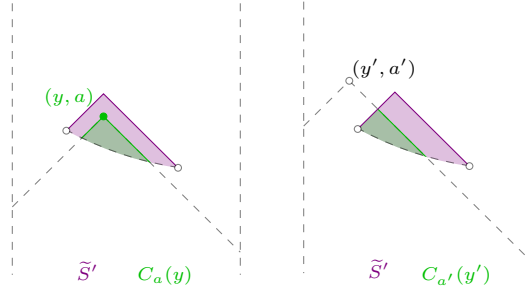
\begin{figure}\begin{center}
	\begin{tikzpicture}[scale=3]
		\foreach \x in {0,1}
			\draw[dashed, draw opacity=0.5] (\x,1.1) -- (\x,-0.1);
		\draw[dashed, draw opacity =0.5] (0,.2) -- (.4,.6) -- (1,0);
		\begin{scope}
			\clip (0,.7) arc (230:270:1.47) -- (.4,.7)--(0,.7);
			\clip (0.2,0.5) -- (0.4,0.7) -- (0.9,0.2) -- (0.4,0) -- (0.2,0.5);
			\filldraw[fill = red!50!blue, fill opacity =0.25, draw opacity =0] (0,0) -- (0,1) -- (1,1) -- (1,0) -- cycle;
			\filldraw[fill = green!75!black, fill opacity=0.25, draw =green!75!black] (.4,.6) -- (-.2,0) -- (1.,0) -- cycle;
			\draw[dashed, draw opacity=0.5, thick] (0,.7) arc (230:270:1.47);
		\end{scope}
		\draw[red!50!blue] (0.237,0.537) -- (0.4,0.7) -- (0.727,0.373);
		\filldraw[fill=white, draw opacity =.5] (0.237,0.537) circle[radius=0.015];
		\filldraw[fill=white, draw opacity =.5] (0.727,0.373) circle[radius=0.015];
		\filldraw[fill=green!75!black, draw=green!75!black] (0.4,0.6) circle[radius=0.015];
		\draw[red!50!blue] (0.33,0) node[anchor=north] {\scriptsize $\widetilde{S}'$};
		\draw[green!75!black] (.67,-.02) node[anchor=north] {\scriptsize $C_a(y)$};
		\draw[green!75!black] (.4,.6) node[anchor=south east] {\scriptsize $(y,a)$};
	\end{tikzpicture}
	\qquad
	\begin{tikzpicture}[scale=3]
		\foreach \x in {0,1}
			\draw[dashed, draw opacity=0.5] (\x,1.1) -- (\x,-0.1);
		\draw[dashed, draw opacity=0.5] (0,.55) -- (.2,.75) -- (1,-0.05);
		\begin{scope}
			\clip (0,.7) arc (230:270:1.47) -- (.4,.7)--(0,.7);
			\clip (0.2,0.5) -- (0.4,0.7) -- (0.9,0.2) -- (0.4,0) -- (0.2,0.5);
			\filldraw[fill = red!50!blue, fill opacity =0.25, draw opacity =0] (0,0) -- (0,1) -- (1,1) -- (1,0) -- cycle;
			\filldraw[fill = green!75!black, fill opacity=0.25, draw =green!75!black, draw opacity =1] (-0.55,0) -- (.2,.75) -- (0.95,0) -- cycle;
			\draw[dashed, draw opacity=0.5, thick] (0,.7) arc (230:270:1.47);
		\end{scope}
		\draw[red!50!blue] (0.237,0.537) -- (0.4,0.7) -- (0.727,0.373);
		\filldraw[fill=white, draw opacity =.5] (0.237,0.537) circle[radius=0.015];
		\filldraw[fill=white, draw opacity =.5] (0.2,0.75) circle[radius=0.015];
		\filldraw[fill=white, draw opacity =.5] (0.727,0.373) circle[radius=0.015];
		\draw[red!50!blue] (0.33,0) node[anchor=north] {\scriptsize $\widetilde{S}'$};
		\draw[green!75!black] (.67,-.02) node[anchor=north] {\scriptsize $C_{a'}(y')$};
		\draw (.17,.72) node[anchor=south west] {\scriptsize $(y',a')$};
	\end{tikzpicture}
	\caption{In purple we have the support of the $\DbAI$-module $\widetilde{S}'$, a lift of $S'$. In green we have the set $C_a(y)$ on the left and the set $C_{a'}(y')$ on the right, which can be seen as a positive cone from $(y,a)$  and from $(y',a')$, respectively, intersected with the support of $S'$. The left $C_a(y)$ contains $(y,a)$ while the right $C_{a'}(y')$ does not contain $(y',a')$.}\label{fig:positive cone}
	\end{center}\end{figure}
	
	\begin{lemma}\label{lem:intersecting positive cones}
		Let $\phi:S\to S'$ be a morphism in $\ReppreI$, where $S$ and $S'$ are sheets, such that $\phi(f_y)$ is path like with length $a+\upbnd(y)$ and $\phi(f_z)$ is path like with length $a+\upbnd(z)$, for some $a\in\RR$.	
		Suppose $(y,a+\upbnd(y))\in C_a(y)$ and $(z,a+\upbnd(z))\in C_a(z)$.
		Then, for each $(w,b)\in C_a(y)\cap C_a(z)$, there exists morphisms $g_1:y\to w$ and $g_2:z\to w$ such that $\phi(g_1f_y)=\phi(g_2f_z)$ and the length of $\phi(g_1f_y)$ is $b$.
	\end{lemma}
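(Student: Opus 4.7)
The plan is to construct $g_1$ and $g_2$ explicitly from the cone conditions, then invoke naturality of $\phi$ together with the fact that each $\Hom$-space in $\preI$ contains at most one path-like morphism of any given length.

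First I would build the morphisms. Since $(w,b)\in C_a(y)$, the inequality $b-(a+\upbnd(y))\geq |y-w|$ holds, so there is a path-like morphism $g_1\colon y\to w$ in $\preI$ of length exactly $b-a-\upbnd(y)$ (non-zero, since the length is at least $|y-w|$ and less than $|y-w|+\boldell_{yw}$, the latter bound following from the existence of the path-like of length $b$ in $\Hom_{\preI}(x',w)$ once one tracks the relevant triangle inequalities). The condition $(w,b)\in C_a(z)$ likewise produces a path-like $g_2\colon z\to w$ of length $b-a-\upbnd(z)$.

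Next I would apply naturality of $\phi$. As a natural transformation between functors $\preI\to\kVec$, $\phi$ satisfies $\phi(g_1 f_y)=g_1\cdot \phi(f_y)$ and $\phi(g_2 f_z)=g_2\cdot \phi(f_z)$, where the dot denotes the action coming from the $\preI$-module structure on $S'$. By hypothesis $\phi(f_y)$ is path-like of length $a+\upbnd(y)$ in $\Hom_{\preI}(x',y)$, so the composite $g_1\cdot\phi(f_y)$ in $\Hom_{\preI}(x',w)$ is either zero or the unique path-like morphism of length $a+\upbnd(y)+\len(g_1)=b$. The cone condition $(w,b)\in C_a(y)$ provides a path-like of length $b$ in $S'(w)$; combined with uniqueness of path-likes of a fixed length, the composition is non-zero and equals this distinguished generator. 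The same reasoning applied to $g_2\cdot\phi(f_z)$ (of length $a+\upbnd(z)+\len(g_2)=b$) yields the same unique path-like in $S'(w)$. Hence $\phi(g_1 f_y)=\phi(g_2 f_z)$, and both have length $b$.

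The main obstacle is verifying that $g_1\cdot\phi(f_y)$ genuinely survives in $S'(w)$ rather than being killed when passing through the subquotient structure defining the sheet $S'$; equivalently, that the putative length-$b$ element is neither forced into the kernel $P_{x'}/N'$ is taken from, nor outside the image of $M'$ in $N'$. This is exactly what condition (1) in the definition of $C_a(y)$ ensures: an actual path-like of length $b$ lies in $S'(w)$, and by uniqueness of path-likes of a given length in $\preI$, this must coincide with both $g_1\cdot\phi(f_y)$ and $g_2\cdot\phi(f_z)$. Once this single point is pinned down, the lemma follows immediately from naturality.
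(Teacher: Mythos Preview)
Your argument is correct and follows essentially the same route as the paper's own proof: pick $(w,b)$ in the intersection, use the cone inequalities to manufacture path-like morphisms $g_1:y\to w$ and $g_2:z\to w$ of the right lengths, and then invoke naturality of $\phi$ together with uniqueness of path-likes of a given length in $\Hom_{\preI}(x',w)$ to identify both $\phi(g_1f_y)$ and $\phi(g_2f_z)$ with the distinguished path-like $g\in S'(w)$ of length $b$.

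If anything, you are more careful than the paper on one point: you flag that one must check $g_1$ is actually a nonzero morphism in $\preI$ (i.e.\ that $b-a-\upbnd(y)<|y-w|+\boldell_{yw}$), whereas the paper simply asserts the existence of $g_1,g_2$ with $g_1\phi(f_y)=g_2\phi(f_z)=g$. Your parenthetical that this ``follows from the relevant triangle inequalities'' is correct: writing the diamond constraint as $c<1-|(1-x')-w|$ for a path-like $x'\to w$ of length $c$, the bound $b-c<1-|(1-y)-w|$ follows from $b<1-|(1-x')-w|$, $c\geq|x'-y|$, and the ordinary triangle inequality $|(1-y)-w|\leq|(1-x')-w|+|x'-y|$. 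So the potential obstacle you identify is genuine but easily dispatched, and the rest of your argument goes through exactly as in the paper.
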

	\begin{proof}
		If $C_a(y)\cap C_a(z)=\emptyset$ we are done.
		So, assume $C_a(y)\cap C_a(z)$ is nonempty and let $(w,b)\in C_a(y)\cap C_a(z)$.
		Then there is a path like $g:x'\to w$ in $S'(w)$ with length $b$ such that
		\[
			b-(a+\upbnd(y)) \geq |y-w|
			\qquad \text{and} \qquad
			b-(a+\upbnd(z)) \geq |z-w|.
		\]
		Then there are $g_1:y\to w$ and $g_2:z\to w$ in $\preI$ such that $0\neq g_1f_1=g_2f_2=g$.
		In particular, $\phi(g_1f_y)=g_1f_1=g=g_2f_2=\phi(g_2f_z)$.
	\end{proof}
	
	In Lemma~\ref{lem:intersecting positive cones} we can see that the choice of $\phi(f_y)$ and $\phi(f_z)$ depend on each other.
	E.g., if we defined $\phi':S\to S'$ to have $\phi'(f_y)=\lambda \phi(f_y)$ then we must have $\phi'(f_z)=\lambda \phi(f_z)$.
	This leads us to the following definition.
	
	\begin{definition}\label{def:codependent}
		Let $y,z\in\II$, $a\in\RR$, and $S,S'$ be sheets in $\ReppreI$.
		We say $y$ and $z$ are \emph{$a$-codependent} when $\phi(f_y)$ has (a multiple of) a path like summand with length $a+\upbnd(y)$ and only if $\phi(f_z)$ has (a multiple of) a path like summand with length $a+\upbnd(z)$, for any morphism $\phi:S\to S'$.
	\end{definition}

	Notice that in the context of Lemma~\ref{lem:intersecting positive cones}, we have $y$ and $z$ are $a$-codependent.
	
	Given a $y\in\gen(S)$, we wish to construct a maximally codependent set, $\codep_{y,a}\subseteq\gen(S)$.
	To do this, let $\Delta:\II\cup\{\inf\II,\lub\II\}\to \RR$ be an auxiliary function given by $\Delta(y)=\dwnbnd'(y)-(a+\upbnd(y))$.
	
	By Proposition~\ref{prop:boundaries and decorous modules}, we know that both $\dwnbnd'$ and $\upbnd$ are continuous and so $\Delta$ is continuous.
	In particular, if $\Delta(y)>0$ then there exists a an open interval $B\subseteq\II$ containing $y$ such that $\Delta(z)>0$ for all $z\in B$.
	Since $\II$ is itself bounded there must be a largest open interval $B_a(y)$\label{def:Bay} containing $y$.
	
	\begin{lemma}\label{lem:a-codependent}
		Let $S$ and $S'$ be sheets in $\ReppreI$, let $y\in \gen(S)\cap\supp(S')$ such that $(y,a+\upbnd(y))\in C_a(y)$, and let $z\in B_a(y)$.
		Then $y$ and $z$ are $a$-codependent.
	\end{lemma}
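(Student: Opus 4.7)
The plan is to establish $a$-codependence via a common-point argument and then extend from a local neighborhood of $y$ to all of $B_a(y)$ by a connectedness reduction. Fix any morphism $\phi\colon S\to S'$ and assume without loss of generality that $y \leq z$. Using the Lipschitz property of $\upbnd$ from Proposition~\ref{prop:boundaries and decorous modules}, the point
\[
    w := \tfrac{1}{2}\bigl(y + z + \upbnd(z) - \upbnd(y)\bigr)
\]
lies in $[y,z]$, and setting $b := \upbnd(y) + (w-y)$ we have $b = \upbnd(z) + (z-w)$. Since $B_a(y)$ is an open interval containing both $y$ and $z$, it also contains $w$. Choose path-like $h_1\colon y\to w$ of length $w-y$ and $h_2\colon z\to w$ of length $z-w$; then $h_1 f_y$ and $h_2 f_z$ are nonzero path-like morphisms in $\Hom_{\preI}(x,w)$ of the same length $b$. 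Since such morphisms are unique up to scalar, $h_1 f_y = \mu\, h_2 f_z$ for some $\mu\in\Bbbk\setminus\{0\}$, and naturality of $\phi$ gives the key identity
\[
    h_1\, \phi(f_y) \;=\; \mu\, h_2\, \phi(f_z) \qquad\text{in } S'(w).
\]

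Next I would compare path-like summands on the two sides. Decomposing $\phi(f_y) = \sum_j \lambda_j g_j$ into path-like terms of distinct lengths $\ell_j$, multiplication by $h_1$ yields summands $\lambda_j(h_1 g_j)$ of length $\ell_j + (w-y)$ (or zero if the product vanishes in $\preI$). In particular, a nonzero length-$(a+\upbnd(y))$ summand of $\phi(f_y)$ corresponds, via $h_1$, to a nonzero length-$(a+b)$ summand of $h_1\phi(f_y)$, provided the image survives in $S'(w)$. The same correspondence, via $h_2$, relates length-$(a+\upbnd(z))$ summands of $\phi(f_z)$ to length-$(a+b)$ summands of $h_2\phi(f_z)$. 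Reading off the length-$(a+b)$ components of the identity above yields the desired biconditional from Definition~\ref{def:codependent}, establishing $a$-codependence locally, i.e., for $z$ sufficiently close to $y$.

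Finally I would extend to all $z \in B_a(y)$ by connectedness. The set $E\subseteq B_a(y)$ of points $a$-codependent with $y$ contains $y$, and $a$-codependence is symmetric and transitive as an iff-relation, so the local argument applied at each $y'\in E$ shows $E$ is open, while the local argument at a limit point $z^\ast$ forces $z^\ast\in E$, so $E$ is closed. Connectedness of the open interval $B_a(y)$ then gives $E = B_a(y)$. The main obstacle is ensuring that the length-$(a+b)$ summand on the left actually survives as a nonzero element of $S'(w)$: because $\upbnd$ need not saturate the Lipschitz bound, $b$ can exceed $\upbnd(w)$, so the inequality $a+b < \dwnbnd'(w)$ does not follow directly from $\Delta(w)>0$. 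The connectedness reduction sidesteps this by keeping $z$ near the current base point, where continuity of $\upbnd$ and $\dwnbnd'$ makes the required estimate automatic; the hypothesis $y\in\gen(S)$ ensures that $f_y$ is genuinely independent data, so that distinguishing a summand of precisely length $a+\upbnd(y)$ in $\phi(f_y)$ is meaningful rather than an artifact of a lower generator.
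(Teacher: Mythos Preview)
Your approach is essentially the same as the paper's: establish $a$-codependence for nearby points, then propagate across $B_a(y)$. The only real difference is in the propagation step: the paper uses a compactness argument (the continuous function $\Delta$ attains a positive minimum $\epsilon$ on the closed interval $[y,z]\subset B_a(y)$, so a finite chain $y=w_0,w_1,\dots,w_n=z$ with $|w_{i+1}-w_i|<\epsilon$ suffices, invoking Lemma~\ref{lem:intersecting positive cones} at each link), whereas you use a connectedness argument (the set $E$ of points $a$-codependent with $y$ is open and relatively closed in $B_a(y)$). Both are standard and interchangeable here. Your explicit construction of the meeting point $w$ and the identity $h_1\phi(f_y)=\mu h_2\phi(f_z)$ unpacks what the paper packages into the nonemptiness of $C_a(y)\cap C_a(z)$ together with Lemma~\ref{lem:intersecting positive cones}; in fact $\mu=1$, since path-like morphisms of a fixed length between two given objects are unique. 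The final remark about $y\in\gen(S)$ is not needed for the argument.
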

	\begin{proof}
		We know $\Delta(y)>0$ since $y\in C_a(y)$.
		If $|y-z| < \min\{\Delta(y),\Delta(z)\}$, then $C_a(y)\cap C_a(z)\neq\emptyset$ and by Lemma~\ref{lem:intersecting positive cones} we see $y$ and $z$ are codependent.
		
		Suppose $|y-z|$ is greater than at least one of $\Delta(y)$ or $\Delta(z)$.
		Since $\Delta$ is continuous and $B_a(y)$ is by definition the largest open interval containing $y$ such that $\Delta$ is positive on $B_a(y)$, we know that we have $\displaystyle\lim_{w'\to w} \Delta(w') > 0$, for all $w\in B_a(y)$.
		
		Without loss of generality, assume $y<z$.
		Since $\Delta$ is continuous on the closed interval $[y,z]\subsetneq B_a(y)$, the function $\Delta$ must attain a minimum and maximum on $[y,z]$.
		Let $\e$ be the minimum.
		Then for any $w,w'\in[y,z]$ such that $0<w'-w< \e$, we have $C_a(w)\cap C_a(w')\neq\emptyset$.
		Let $n\in\NN_{>0}$ such that $\frac{z-y}{n} < \e$.
		Let $w_0=y$, $w_n=z$, and for each $1\leq i \leq n-1$, let $w_i=y+i(\frac{z-y}{n})$.
		Then, for each $0\leq i \leq n-1$, we have $C_a(w_i)\cap C_a(w_{i+1})\neq\emptyset$.
		
		Thus, $y=w_0$ and $w_1$ are $a$-codependent, $w_1$ and $w_2$ are $a$-codependent, and so  on up to $w_{n-1}$ and $w_n=z$ are $a$-codependent.
		Since the definition of $a$-codependent is an if and only if statement, we see $y$ and $z$ are $a$-codependent.
	\end{proof}
	
	So, we define $\codep_{y,a}:=B_a(y)\cap\gen(S)$.
	Notice that for any other $z\in B_a(y)\cap\gen(S)$, we have $\codep_{z,a}=\codep_{y,a}$.
	
	\begin{lemma}\label{lem:codep determines a morphism}
		Let $y\in \gen(S)\cap\supp(S')$ and let $f_0$ be a nonzero element of $S'(y)$.
		Let $a\in \RR$ such that the length of $f_0$ is $a+\upbnd(y)$ and assume $C_a(y)\ni (y,a+\upbnd(y))$.
		Finally, assume that, for each $z\in C_a(y)$, we have $\upbnd'(z)\leq \upbnd(z) < \dwnbnd'(z) \leq \dwnbnd(z)$.
		Then there is a morphism $\phi:S\to S'$ determined by $\phi(f_y)=f_0$.
	\end{lemma}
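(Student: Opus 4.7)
The plan is to construct $\phi$ explicitly on a spanning set of path-like morphisms and then verify $\preI$-linearity. For each $w\in\II$ and each path-like $h\in S(w)$, if $h$ factors as $h=g f_y$ for some path-like $g\colon y\to w$ in $\preI$, I set $\phi(h)=gf_0$ when $\len(gf_0)<\dwnbnd'(w)$, and $\phi(h)=0$ otherwise. All other path-like elements of $S(w)$, including the generators $f_z$ for $z\in\gen(S)\setminus\{y\}$, are sent to $0$, and I extend $\Bbbk$-linearly. The factorisation $h=gf_y$ is unique when it exists, because path-like morphisms in $\preI$ are determined by their length and $\len(g)$ is forced to be $\len(h)-\upbnd(y)$.

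I first check image containment. From $f_0\in S'(y)$ of length $a+\upbnd(y)$ and the hypothesis $\upbnd'(y)\leq\upbnd(y)$ I deduce $a\geq 0$. Then for $h=gf_y\in S(w)$ with $\phi(h)=gf_0$ non-zero, the length $\len(gf_0)=\len(h)+a$ satisfies $\upbnd'(w)\leq\upbnd(w)\leq\len(h)\leq\len(h)+a$ (using the hypothesis on $C_a(y)$ applied at $w$), so $gf_0$ lies in $S'(w)$; the upper bound is built into the case split. Next I verify $\preI$-linearity: for a path-like $\beta\colon w\to w'$ and $h=gf_y$ I have $\beta h=(\beta g)f_y$, and by case analysis the only delicate scenario is when $\phi(h)=0$ (so $\len(gf_0)\geq\dwnbnd'(w)$) but $\phi(\beta h)\neq 0$ (so $\len((\beta g)f_0)<\dwnbnd'(w')$). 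This would force $\dwnbnd'(w')>\dwnbnd'(w)+\len(\beta)\geq\dwnbnd'(w)+|w'-w|$, contradicting the $1$-Lipschitz bound on $\dwnbnd'$ supplied by Proposition~\ref{prop:boundaries and decorous modules}.

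The hard part is verifying self-consistency when an element $u\in S(w)$ admits two competing factorisations $u=gf_y=hf_z$ with $z\in\gen(S)\setminus\{y\}$; linearity then demands that $gf_0$ vanish in $S'(w)$, so that the two prescriptions for $\phi(u)$ agree. This is where the assumption $\dwnbnd'(z)\leq\dwnbnd(z)$ on points in the $a$-shifted cone $C_a(y)$ enters essentially: it keeps the shift-by-$a$ assignment $g\mapsto gf_0$ within the window $[\upbnd'(w),\dwnbnd'(w))$ only on those strands genuinely reachable from $y$, so that conflicting factorisations exit $S'$ and are killed by the case split. Once this coherence is verified, the assignment extends $\Bbbk$-linearly to a well-defined $\preI$-linear morphism $\phi\colon S\to S'$ with $\phi(f_y)=f_0$, as required.
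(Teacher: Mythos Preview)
Your construction differs from the paper's in a crucial way, and this difference creates a genuine gap. You send every generator $f_z$ with $z\in\gen(S)\setminus\{y\}$ to $0$; the paper instead defines $\phi(f_z)$, for \emph{every} $z\in B_a(y)$, to be the path-like element of $S'(z)$ of length $a+\upbnd(z)$, and then extends by $\phi(f)=g\phi(f_z)$ whenever $\len(f)<\dwnbnd'(z)-a$ (equivalently $\len(f)+a<\dwnbnd'(z)$), and $\phi(f)=0$ otherwise. That is, the paper's morphism is simply the ``shift by $a$'' map on all of $B_a(y)$, not just on the cone reachable from $y$.

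Your version is not well-defined in general, and the unproven ``hard part'' is where it fails. Suppose $z\in\gen(S)\cap B_a(y)$ with $z\neq y$. By the definition of $\gen(S)$ we have $\upbnd(z)<\upbnd(y)+|y-z|$, so $f_z$ does not factor through $f_y$ and you set $\phi(f_z)=0$. Now let $g=\alpha^{(*)}_{yz}$ be the shortest morphism $z\to y$ and set $u=gf_z\in S(y)$, which has length $\upbnd(z)+|y-z|>\upbnd(y)$. This $u$ also factors as $g'f_y$ with $g'$ an endomorphism of $y$ of length $\upbnd(z)+|y-z|-\upbnd(y)$. Your two prescriptions give $\phi(u)=S'(g)(0)=0$ and $\phi(u)=g'f_0$, the latter being the path-like element of $S'(y)$ of length $\upbnd(z)+|y-z|+a$. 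For consistency you would need $\upbnd(z)+|y-z|+a\geq\dwnbnd'(y)$, but nothing in the hypotheses forces this; indeed $z\in B_a(y)$ only gives $\upbnd(z)+a<\dwnbnd'(z)$, and with $\dwnbnd'$ large near $y$ the inequality fails. This is exactly what Lemma~\ref{lem:a-codependent} encodes: $y$ and $z$ are $a$-codependent, so any $\phi$ with $\phi(f_y)=f_0$ path-like of length $a+\upbnd(y)$ is \emph{forced} to have $\phi(f_z)$ with a path-like summand of length $a+\upbnd(z)$; in particular $\phi(f_z)\neq 0$. The fix is to adopt the paper's uniform shift-by-$a$ definition on $B_a(y)$ rather than restricting to elements reachable from $f_y$.
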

	
	To prove the lemma wet set the notation that $\len(f)$ means the length of $f$, for some path like morphism in $\preI$.
	
	\begin{proof}
		We define $\phi$ explicitly.
		Define $\phi(f_y)=f_0$.
		Let $f\in S(z)$ be path like for some $z\notin B_a(y)$.
		Set $\phi(f)=0$.
		
		Let $z\in B_a(y)$.
		Define $\phi(f_z)$ to be the path like element of $S'(z)$ such that the length of $\phi(f_z)$ is $a+\upbnd(z)$.
		For each path like $f\in S(z)$ we define $\phi(f)$ as follows:
		\[
			\phi(f) = \begin{cases}
				g\phi(f_z) & \len(f) < \len(f_z)+\Delta(z) \text{ and } \len(g)=\len(f)-\len(f_z) \\
				0 & \len(f) \geq \len(f_z)+\Delta(z).
			\end{cases}
		\]
		By assumption, if $(w,b)\in C_a(z)$, for some $z\in B_a(y)$, then there is a path like $f$ of length $b-a$ in $S(w)$ and a path like $g:z\to w$ with $\len(g)=(b-a)-\upbnd(z)$ such that $gf_z=f$.
		I.e., if $(w,b)\in C_a(z)$ then there is some $f\in S(w)$ such that $\phi(f)\neq 0$ has length $b$.
		
		We need now only show that, for each path like $f\in S$ and path like morphism $g:z\to w$ in $\preI$, we have $(S'(g)\circ \phi)(f) = (\phi\circ S(g))(f)$.
		By definition of a sheet module, $S(g)(f) = gf$ and $S'(g)(f') = gf$, for $f,f',g$ path like.
		Let $f\in S(z)$ be path like and suppose $z\notin B_a(y)$.
		Then $\len(\alpha^{(*)}_{wz} f)\geq \len(f_w)+\Delta(w)$ for any $w\in B_a(y)$, by construction.
		Thus, for $f\in S(z)$ and $g$ in $\preI$ path like, and any $w\in \II$, we see $S'(g)(\phi(f)) = 0 = \phi(S(g)(f))$.
		
		Now suppose $f\in S(z)$ is path like, for $z\in B_a(y)$, and $\len(f) < \len(f_z)+\Delta(z)$.
		For any $w\notin B_a(y)$, we have	$S'(\alpha^{(*)}_{wz})(\phi(f)) = 0 = \phi(S(\alpha^{(*)}_{wz}f))$.
		For $w\in B_a(y)$ and $g:z\to w$ path like, if $\len(gf) \geq \len(f_w)+\Delta(w)$ then $\phi(gf)=0$ and so we have $S'(\alpha^{(*)}_{wz})(\phi(f)) = 0 = \phi(S(\alpha^{(*)}_{wz}f))$ again.
		If $\len(gf) < \len(f_w)+\delta(w)$ then $\phi(gf)=g\phi(f)$ and so $S'(g)(\phi(f))=\phi(S(g)(f))$.
	\end{proof}
	
	Let $S,S'$ be sheets in $\ReppreI$.
	As a consequence of the arguments in the proof of the lemma, if there is some $z\in\codep_{y,a}$ such that $\upbnd(z)+a < \upbnd'(z)$, then we must have $\phi(f_z)=0$ for all $z\in\codep_{y,a}$.
	Nonetheless, Lemma~\ref{lem:codep determines a morphism} gives us the first fact necessary for understanding $\Hom_{\ReppreI}(S,S')$.
	
	However, there are complications.
	Consider the sheets $S$ and $S'$ as shown in Figure~\ref{fig:complicated sheets}.
	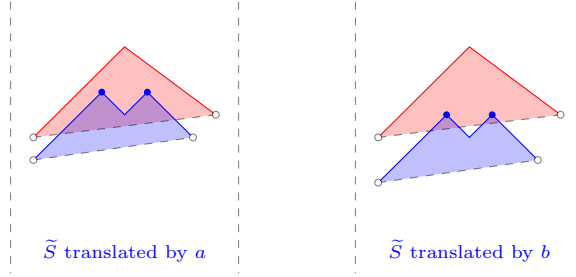
\begin{figure}\begin{center}
	\begin{tikzpicture}[scale=3]
		\foreach \x in {0,1}
			\draw[dashed, draw opacity=0.5] (\x,1.1) -- (\x,-0.1);
		\filldraw[red, fill opacity=.25, draw opacity=0] (.1,.5) -- (.5,.9) -- (.9,.6) -- cycle;
		\filldraw[blue, fill opacity=.25, draw opacity=0] (.1,.4) -- (.4,.7) -- (.5,.6) -- (.6,.7) -- (.8,.5) -- cycle;
		\draw[red] (.1,.5) -- (.5,.9) -- (.9,.6);
		\draw[blue]  (.1,.4) -- (.4,.7) -- (.5,.6) -- (.6,.7) -- (.8,.5);
		\draw[dashed, draw opacity=.5] (.1,.5) -- (.9,.6);
		\draw[dashed, draw opacity=.5] (.1,.4) -- (.8,.5);
		\filldraw[fill=white, draw opacity =.5] (0.1,.5) circle[radius=0.015];
		\filldraw[fill=white, draw opacity =.5] (0.1,.4) circle[radius=0.015];
		\filldraw[fill=white, draw opacity =.5] (0.9,.6) circle[radius=0.015];
		\filldraw[fill=white, draw opacity =.5] (0.8,.5) circle[radius=0.015];
		\filldraw[blue, draw opacity=0] (.4,.7) circle[radius=0.015];
		\filldraw[blue, draw opacity=0] (.6,.7) circle[radius=0.015];
		\draw[blue] (.5,0) node {\scriptsize $\widetilde{S}$ translated by $a$};
	\end{tikzpicture}
	\qquad \qquad
	\begin{tikzpicture}[scale=3]
		\foreach \x in {0,1}
			\draw[dashed, draw opacity=0.5] (\x,1.1) -- (\x,-0.1);
		\filldraw[red, fill opacity=.25, draw opacity=0] (.1,.5) -- (.5,.9) -- (.9,.6) -- cycle;
		\filldraw[blue, fill opacity=.25, draw opacity=0] (.1,.3) -- (.4,.6) -- (.5,.5) -- (.6,.6) -- (.8,.4) -- cycle;
		\draw[red] (.1,.5) -- (.5,.9) -- (.9,.6);
		\draw[blue] (.1,.3) -- (.4,.6) -- (.5,.5) -- (.6,.6) -- (.8,.4);
		\draw[dashed, draw opacity=.5] (.1,.5) -- (.9,.6);
		\draw[dashed, draw opacity=.5] (.1,.3) -- (.8,.4);
		\filldraw[fill=white, draw opacity =.5] (0.1,.5) circle[radius=0.015];
		\filldraw[fill=white, draw opacity =.5] (0.1,.3) circle[radius=0.015];
		\filldraw[fill=white, draw opacity =.5] (0.9,.6) circle[radius=0.015];
		\filldraw[fill=white, draw opacity =.5] (0.8,.4) circle[radius=0.015];
		\filldraw[blue, draw opacity=0] (.4,.6) circle[radius=0.015];
		\filldraw[blue, draw opacity=0] (.6,.6) circle[radius=0.015];
		\draw[blue] (.5,0) node {\scriptsize $\widetilde{S}$ translated by $b$};
	\end{tikzpicture}
	\caption{The \textcolor{blue}{blue} $\widetilde{S}$'s, translated by $a$ on the left and by $b$ on the right, are lifts of $S$. The \textcolor{red}{red} $\widetilde{S}'$ is a lift of $S'$. On the left, the lifts of the two generatators in $\gen(S)$, marked by blue dots, are $a$-codependent. On the right, the lifts of the two generators are \emph{not} $b$-codependent.}\label{fig:complicated sheets}
	\end{center}\end{figure}
	In the left picture, we have that two elements $y,y'$ of $\gen(S)$ are $a$-codependent but in the right picture, we have that the elements are \emph{not} $b$-codependent.
	For some morphism, choosing $\phi(f_y)$ to be path like with length $a+\upbnd(y)$ determines $\phi(f_{y'})$.
	However, choosing $\phi(f_y)$ to be path like with length $b+\upbnd(y)$ means the choice for $\phi(f_{y'})$ is independent.
	What's happening is that $\codep_{y,a}=\codep_{z,a}$ in the first case but $\codep_{y,a'}\cap\codep_{z,a'}=\emptyset$ in the second case.
	
	We introduce the following definition to check for situations similar to this.
	\begin{definition}\label{def:range of codependence}
		Let $S,S'$ be sheets in $\ReppreI$ and let $\codep_{y,a}$ be as before.
		The \emph{range of codependence of $\codep_{y,a}$} is the set of all $b\in\RR_{\geq a}$ such that, for all $z,w\in \codep_{y,a}$, if $\codep_{z,b}$ and $\codep_{w,b}$ are nonempty then $\codep_{z,b}=\codep_{w,b}$.
	\end{definition}
	Notice that the range of codependence of $\codep_{y,a}$ always includes $a$.
	
	Using this definition we see that, in the situation from Figure~\ref{fig:complicated sheets}, $b$ is outside the range of codependence of $\codep_{y,a}$.

	So, we have the following proposition.
	\begin{proposition}\label{prop:elementary morphisms}
		Assume the hypotheses from Lemma~\ref{lem:codep determines a morphism}.
		Let $\sum_{i=1}^m\lambda_i f_i$ be a sum of path like morphisms in $S'(y)$ such that, for each $1\leq i \leq m$, the length of $f_i$ is $\upbnd(y)+b<\dwnbnd'(y)$, where $b$ is in the range of codependence of $\codep_{y,a}$.
		Then there is a morphism $\phi:S\to S'$ determined by $\phi(f_y)=\sum_{i=1}^m \lambda_i f_i$.
	\end{proposition}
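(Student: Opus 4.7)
The plan is to construct $\phi$ as a $\Bbbk$-linear combination of morphisms built one summand at a time via Lemma~\ref{lem:codep determines a morphism}. For each index $i$, I would invoke that lemma with parameter $a$ replaced by $b$ and with $f_0 := f_i$, yielding a morphism $\phi_i : S \to S'$ satisfying $\phi_i(f_y) = f_i$. Setting $\phi := \sum_{i=1}^{m} \lambda_i \phi_i$, which is again a morphism of $\preI$-modules by $\Bbbk$-linearity of $\ReppreI$, then gives $\phi(f_y) = \sum_{i=1}^{m} \lambda_i f_i$ as required.

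The main routine check is that the hypotheses of Lemma~\ref{lem:codep determines a morphism} do in fact transfer from parameter $a$ to parameter $b$. The condition $(y, b+\upbnd(y)) \in C_b(y)$ is immediate since $f_i$ is a path like morphism of length exactly $b+\upbnd(y)$ in $S'(y)$, and the bound $b+\upbnd(y) < \dwnbnd'(y)$ is assumed. For the decorous inequality, observe that because $b \geq a$, any $(z,c) \in C_b(y)$ satisfies $c - (a+\upbnd(y)) \geq c - (b+\upbnd(y)) \geq |y-z|$, so $C_b(y) \subseteq C_a(y)$; the chain $\upbnd'(z) \leq \upbnd(z) < \dwnbnd'(z) \leq \dwnbnd(z)$ assumed on $C_a(y)$ therefore restricts to the same chain on $C_b(y)$.

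The subtler point, and what I view as the main obstacle, is the interaction between the various $\phi_i$ on generators $z \in \codep_{y,a}$ other than $y$ itself. The extension supplied by Lemma~\ref{lem:codep determines a morphism} at parameter $b$ pins down $\phi_i(f_z)$ only for $z \in \codep_{y,b}$, which can be strictly smaller than $\codep_{y,a}$. This is exactly where the range-of-codependence hypothesis intervenes: the assumption that $b$ lies in the range of codependence of $\codep_{y,a}$ guarantees that for any two elements $z,w \in \codep_{y,a}$ the sets $\codep_{z,b}$ and $\codep_{w,b}$ are either empty or coincide. Consequently $\codep_{y,a}$ partitions into $\codep_{\cdot,b}$-strata in a coherent fashion, and the simultaneous assignments $\phi_i(f_z)$ across different $i$ never conflict when summed. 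Once this is verified, the fact that $\phi$ commutes with the $\preI$-action on every path like element follows from the corresponding property for each $\phi_i$ already established in Lemma~\ref{lem:codep determines a morphism}, and the proposition follows.
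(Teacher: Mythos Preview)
Your approach is essentially the same as the paper's: for each $f_i$ apply Lemma~\ref{lem:codep determines a morphism} (with the shifted parameter) to obtain $\phi_i$ with $\phi_i(f_y)=f_i$, then set $\phi=\sum_i \lambda_i\phi_i$; the range-of-codependence hypothesis is exactly what ensures the individual extensions are compatible. One small point of clarification: since distinct path like morphisms in $S'(y)$ necessarily have distinct lengths, the parameter you call $b$ should really be $b_i=\len(f_i)-\upbnd(y)$ and may vary with $i$, which is how the paper writes it; your containment $C_{b_i}(y)\subseteq C_a(y)$ and the rest of your verification go through for each $b_i$ separately.
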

	\begin{proof}
		For each $b$ in the range of codependence of $\codep_{y,a}$ and each $z,w\in\codep_{y,a}$, we know that $\codep_{z,b}=\codep_{w,b}$ if they are both nonempty, by definition.
		
		For each $f_i$, let $b_i=\len(f_i)-\upbnd(y)$.
		Thus, for each $b_i$ and each $f_i$, we can use Lemma~\ref{lem:codep determines a morphism} to construct a morphism $\phi_i:S\to S'$ determined on $C_b(y)$ by $\phi_i(f_y)=f_i$.
		Then the morphism $\phi$ in the present proposition is given by $\phi=\sum_{i=1}^m \lambda_i\phi_i$.
	\end{proof}
	
	Lemma~\ref{lem:codep determines a morphism} and Proposition~\ref{prop:elementary morphisms} give rise to the following definitions.
	\begin{definition}\label{def:elementary morphisms}
		We introduce two definitions together.
		\begin{enumerate}
			\item A morhism $\phi:S\to S'$ in $\ReppreI$, where $S,S'$ are sheets, is called \emph{elementary} if it is of the form in Proposition~\ref{prop:elementary morphisms}.
			\item Let $\{(y_i,a_i)\}\subset \II\times\RR$ be a possibly infinite collection.
				Assume that if $i\neq j$ then $B_{a_i}(y_i)\cap B_{a_j}(y_j)=\emptyset$.
				Choose an elementary morphism $\phi_i$ for each $(y_i,a_i)$.
				Then the morphism $\phi=\sum_i \phi_i$ is called \emph{multi-elementary}.
		\end{enumerate}
	\end{definition}
	
	To the knowledge of the authors, it does not seem possible to construct a morphism between sheets in $\ReppreI$ that is not a sum of multi-elementary morphisms.
	This leads us to the following conjecture.
	
	\begin{conjecture}\label{conj:finite sums of multielementary}
		Any morphism $\phi:S\to S'$ in $\ReppreI$, where $S,S'$ are sheets, is a finite sum of multi-elementary morphisms.
	\end{conjecture}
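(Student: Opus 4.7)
The plan is to decompose an arbitrary morphism $\phi:S\to S'$ into pieces according to ``length shifts'', and identify each piece as a multi-elementary morphism. For each generator $y\in\gen(S)$, the image $\phi(f_y)$ lies in $S'(y)$, and by the description of Hom spaces in $\preI$ it is a \emph{finite} $\Bbbk$-linear combination of path-like morphisms of distinct lengths. Writing $\phi(f_y)=\sum_{j=1}^{m_y}\lambda_{y,j}g_{y,j}$ with $\len(g_{y,j})=\upbnd(y)+a_{y,j}$ associates to $y$ a finite set $A_y\subset\RR$ of shifts, each of which must satisfy $\upbnd'(y)\leq \upbnd(y)+a_{y,j}<\dwnbnd'(y)$.

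For each $a\in\RR$, define a candidate map $\phi^{(a)}:S\to S'$ by sending $f_y$ to the length-$(\upbnd(y)+a)$ summand of $\phi(f_y)$ (or $0$ if none exists). The first step is to check that $\phi^{(a)}$ is a genuine $\preI$-module morphism. Since composing a path-like morphism $g:y\to w$ of length $\len(g)$ with a path-like element adds lengths exactly (on the nose in $\preR$, and up to the vanishing-at-boundary relations in $\preI$), the map $S'(g)$ preserves the length grading, so the ``shift-$a$ part'' of $\phi$ commutes with all structure morphisms. Then $\phi=\sum_a \phi^{(a)}$ as a formal sum.

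Next, I would show that each $\phi^{(a)}$ is multi-elementary. Fix a generator $y$ with $a\in A_y$: by Lemma~\ref{lem:a-codependent}, the codependence class $\codep_{y,a}=B_a(y)\cap\gen(S)$ is the natural open interval on which the shift-$a$ image is forced by its value at $y$, and Proposition~\ref{prop:elementary morphisms} applied to this class produces an elementary morphism matching $\phi^{(a)}$ there. Varying over the distinct codependence classes (which by Definition~\ref{def:codependent} have pairwise disjoint sets $B_a(\cdot)$) assembles $\phi^{(a)}$ into a multi-elementary morphism, provided the summation is finite; more generally, $\phi$ would then be a sum of multi-elementary morphisms, one per shift $a$.

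The main obstacle is \emph{finiteness}, which is why the statement appears as a conjecture. Within a single codependence class, picking any representative $y_0$ shows that only shifts in the finite set $A_{y_0}$ contribute, so each class accounts for finitely many multi-elementaries. The difficulty is controlling the number of codependence classes of $\gen(S)$ that $\phi$ can activate. Here I would try to exploit the Lipschitz constraint $|\partial(y)-\partial(z)|\leq|y-z|$ from Proposition~\ref{prop:boundaries and decorous modules} applied to $\upbnd,\upbnd',\dwnbnd,\dwnbnd'$, together with the boundedness of $\II=(0,1)$, to argue that the set of shifts $a$ for which $\phi^{(a)}\neq 0$ is discrete and bounded, hence finite. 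The sticking point is that $\gen(S)$ need not be connected or even topologically tame, and a pathological arrangement of narrow ``bubbles'' of $\widetilde{S}$ could in principle support infinitely many independent elementary pieces; ruling this out requires a structural compactness argument for sheet lifts that the present framework does not obviously provide, and is precisely the missing ingredient needed to upgrade the conjecture to a theorem.
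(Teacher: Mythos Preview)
The paper does not prove this statement: it is explicitly labelled a \emph{conjecture}, and the surrounding text says only that ``to the knowledge of the authors, it does not seem possible to construct a morphism between sheets in $\ReppreI$ that is not a sum of multi-elementary morphisms.'' There is therefore no proof in the paper against which to compare your attempt.

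Your sketch is a reasonable outline of how one might approach the conjecture, and you correctly locate the essential obstruction. The length-grading decomposition $\phi=\sum_a\phi^{(a)}$ is sound: since path-like structure maps add a fixed length and distinct-length path-like elements are linearly independent in each $S'(y)$, the shift-$a$ component really is a $\preI$-module morphism. Two genuine gaps remain. First, even for a fixed $a$, identifying $\phi^{(a)}$ as multi-elementary requires the hypotheses of Lemma~\ref{lem:codep determines a morphism} (in particular $\upbnd'(z)\leq\upbnd(z)<\dwnbnd'(z)\leq\dwnbnd(z)$ on the relevant cone), which you have not verified and which need not hold in general. Second, as you note, the global finiteness of the shift set $\{a:\phi^{(a)}\neq 0\}$ is not controlled by the pointwise finiteness of each $A_y$; your proposed compactness argument via the Lipschitz bound is suggestive but not a proof, precisely because $\gen(S)$ can be badly behaved. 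These are exactly the reasons the statement remains a conjecture in the paper.
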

	
\section{Bricks}\label{sec:bricks}
	In this section we classify the bricks in $\ReppreI$ and relate them to a result in \cite{A22} (see Theorem~\ref{thm:bricks are sawtooth modules from continuous A quivers} in the present paper).
	We continue to consider modules as functors $\preI\to\kVec$, which coincides with left modules, but these are equivalent to right modules since $\preI$ is canonically isomorphic to $\preI^{\text{op}}$ (Remark~\ref{rmk:preI and preIop}).
	
	We first need to define deep modules.
	
	\begin{definition}
		Let $M$ be a module in $\ReppreI$.
		We say $M$ is \emph{deep} if there is a path like $g:x\to x$ in $\preI$, with $x\in\supp M$, such that $M(g)\neq 0$.
	\end{definition}
	
	We also need a special type of endomorphism called a push up endomorphism.
	
	\begin{definition}\label{def:push up morphism}
		Let $\ell\in \RR_{\geq 0}$.
		For any module $M$ in $\ReppreI$, the \emph{push up endomorphism $\psi_\ell\in\End_{\ReppreI}(M)$ of length $\ell$} is the endomorphism where $(\psi_\ell)_x=M(g)$ where $g:x\to x$ is path like with length $\ell$ if such a path like morphism exists in $\preI$ and $g=0$ otherwise.
	\end{definition}
	
	It is straightforward to check that $\psi_\ell$ is a morphism in $\ReppreI$ for any module $M$, but is possibly the $0$ morphism.
	
	\begin{remark}\label{rmk:push ups}
		We make the following remarks about push up morphisms.
		\begin{itemize}
			\item There is a pushup morphism $\psi_\ell\in\End_{\ReppreI}(M)$ with $\ell>0$ if and only if $M$ is deep.
			\item Let $\psi_\ell$ be a push up morphism in $\End_{\ReppreI}(M)$ for some $M$ in $\ReppreI$.
			It follows form the definition that $(\psi_\ell)^n=\psi_{n\ell}$, for any $n\in\NN$.
			In particular, the endomorphism $\psi_0:M\to M$ is the identity morphism.
		\end{itemize}
	\end{remark}
	
	Recall that a (left or right) module $M$ over a ring $R$ is called a \emph{brick} if $\End(M)$, in the category of (left or right) $R$-modules, is a division ring.
	Thus, we say a representation $M$ of $\preI$ is a \emph{brick} if $\End_{\ReppreI}(M)$ is a division ring.
	
	\begin{proposition}\label{prop:deep modules are not bricks}
		If $M$ in $\ReppreI$ is deep then $M$ is not a brick.
	\end{proposition}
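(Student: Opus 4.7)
The plan is to exploit the push up endomorphism $\psi_\ell$ to exhibit a nonzero nilpotent element in $\End_{\ReppreI}(M)$, which will force $\End_{\ReppreI}(M)$ to fail to be a division ring.

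First I would invoke the deepness hypothesis together with Remark~\ref{rmk:push ups} to obtain a specific $\ell>0$ and a nonzero push up endomorphism $\psi_\ell\in\End_{\ReppreI}(M)$. Concretely, deepness gives a point $x\in\supp M$ and a path like $g:x\to x$ with $M(g)\neq 0$; taking $\ell=\len(g)>0$, the component $(\psi_\ell)_x=M(g)$ is nonzero, so $\psi_\ell$ itself is nonzero.

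Next I would use the multiplicativity property from Remark~\ref{rmk:push ups}, namely $\psi_\ell^n=\psi_{n\ell}$ for all $n\in\NN$, together with the boundedness of $\II=(0,1)$, to show $\psi_\ell$ is nilpotent. Since $\boldell_y=\min\{y,1-y\}\leq 1/2$ for every $y\in\II$, the endomorphism ring $\End_{\preI}(y)$ contains no path like morphism of length $\geq \boldell_y$; in particular, for any $n$ with $n\ell>1/2$ and any $y\in\II$, there is no path like $h:y\to y$ of length $n\ell$ in $\preI$. By the definition of a push up, this forces $(\psi_{n\ell})_y=0$ for every such $y$, so $\psi_\ell^n=\psi_{n\ell}$ is the zero endomorphism for all sufficiently large $n$.

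Finally, I would conclude by a standard division-ring argument: if $R$ is a division ring and $r\in R$ satisfies $r^n=0$ with $r\neq 0$, then taking the least such $n\geq 2$ gives $r^{n-1}\neq 0$ but $r\cdot r^{n-1}=0$, contradicting the absence of zero divisors. Applying this to the nonzero nilpotent $\psi_\ell\in\End_{\ReppreI}(M)$ shows $\End_{\ReppreI}(M)$ is not a division ring, so $M$ is not a brick. The only step requiring real care is the second one, where one must verify that boundedness of $\II$ together with the definition of $\boldell_y$ actually guarantees that $\psi_\ell$ becomes eventually zero under composition rather than stabilising at some nonzero value; the inequality $\boldell_y\leq 1/2$ for open bounded $\II=(0,1)$ (chosen by Proposition~\ref{prop:preprojective classes}) makes this immediate.
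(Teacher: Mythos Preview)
Your proposal is correct and follows essentially the same approach as the paper: exhibit a nonzero push up endomorphism $\psi_\ell$ with $\ell>0$, use $\psi_\ell^n=\psi_{n\ell}$ together with boundedness of $\II$ to show it is nilpotent, and conclude that $\End_{\ReppreI}(M)$ is not a division ring. The paper's version is terser (it simply takes $n$ with $n\ell>1$ rather than your sharper $n\ell>1/2$), but the argument is the same.
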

	\begin{proof}
		Since $M$ is deep there exists a nonzero $\psi_\ell\in\End_{\ReppreI}(M)$ such that $\ell>0$.
		Then there exists $n\in\NN_{\geq 0}$ such that $n\ell>1$ and so $(\psi_\ell)^n = \psi_{n\ell}=0$.
		Thus, $\End_{\ReppreI}(M)$ is not a division ring and so $M$ is not a brick.
	\end{proof}
	
	In particular, Proposition~\ref{prop:deep modules are not bricks} says that none of the following modules can be bricks: submodules of indecomposable projectives, decorous quotient modules of indecomposable projectives, and sheet modules.
	
	Now that we know what kinds of modules are not bricks, we introduce sawtooth functions.
	These will help us classify bricks.
	
	\begin{definition}\label{def:sawtooth}
		Let $\partial$ be a function as in Corollary~\ref{cor:permuton functions and decorous submodules} and $[a,b]\subset [0,1]$ a closed subinterval.
		We say $\sawtooth$ is a \emph{sawtooth function} if there is some subinterval $I$ of $\ZZ\cup\{\pm\infty\}$ and set $\{x_i\}_{i\in I}\subset[0,1]$ satisfying the following:
		\begin{enumerate}
			\item We have $\min I\neq\max I \in I$.
			\item If $i<j\in I$ then $x_i<x_j$ in $[a,b]$.
			\item We have $x_{\min I}=a$ and $x_{\max I}=b$.
			\item For each $2i,2i+1\in I$, we have $x_{2i+1}-x_{2i} = \partial(x_{2i})-\partial(x_{2i+1})$.
			\item For each $2i,2i-1\in I$, we have $x_{2i}-x_{2i-1} = \partial(x_{2i})-\partial(x_{2i-1})$. 
		\end{enumerate}
	\end{definition}
	Notice the lack of absolute values in the last two items!
	
	See examples of sawtooth function in Figure~\ref{fig:sawtooth}.
	\begin{figure}\begin{center}
	\begin{tikzpicture}[scale=3]
		\foreach \x in {0,1}
			\draw[dashed, draw opacity=0.5] (\x,1.1) -- (\x,-0.1);
		\draw[blue] (0,.4) -- (.2,.6) -- (.4,.4) -- (.6,.6) -- (.8,.4) -- (1,.6);
	\end{tikzpicture}
	\qquad
	\begin{tikzpicture}[scale=3]
		\foreach \x in {0,1}
			\draw[dashed, draw opacity=0.5] (\x,1.1) -- (\x,-0.1);
		\draw[blue] (0,.1) -- (.2,.3) -- (.3,.2) -- (.5,.4) -- (.6,.3) -- (.8,.5);
	\end{tikzpicture}
	\qquad
	\begin{tikzpicture}[scale=3]
		\foreach \x in {0,1}
			\draw[dashed, draw opacity=0.5] (\x,1.1) -- (\x,-0.1);
		\draw[blue] (0.2,0.5) -- (0.4,0.7) -- (0.5,0.6) -- (0.55,0.65) -- (0.575,0.625) -- (0.5875,0.6375) -- (0.59375,.63125) -- (0.596875,0.624475) -- (0.5984375,0.6278625) -- (0.6,.625);
	\end{tikzpicture}
	\caption{Some examples of sawtooth functions from Definition~\ref{def:sawtooth}. The right most example has an accumulation of the sawtooth waves, which is allowed.}\label{fig:sawtooth}
	\end{center}\end{figure}
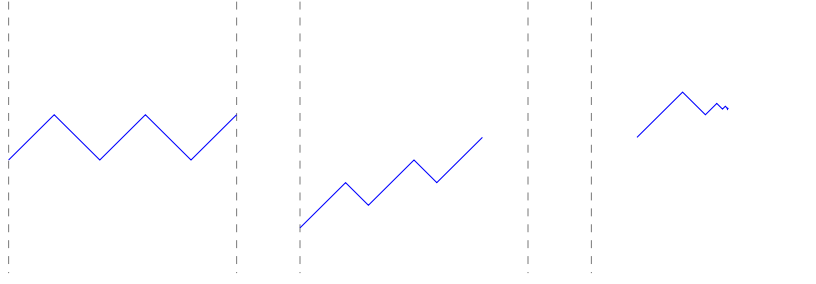
	
	We can use sawtooth functions to define a nice class of modules.
	
	\begin{definition}\label{def:sawtooth module}
		Let $M$ be in $\ReppreI$.
		We say $M$ is a \emph{sawtooth module} if there exists a sawtooth function $\sawtooth$ such that
		\begin{itemize}
			\item the closure of $\supp M$ in $[0,1]$ is $[a,b]$,
			\item $a\in\supp M\Leftrightarrow a=x_i,i\in\ZZ$,
			\item $b\in\supp M\Leftrightarrow b=x_i,i\in\ZZ$, and
			\item if $M(f)\neq 0$ for some path like $f:x\to y$ in $\preI$, then
			\[
				f = \begin{cases}
					\alpha_{yx} & \text{if } x_{2i-1} \leq x \leq y \leq x_{2i} \\
					\alpha^*_{yx} & \text{if } x_{2i} \leq y \leq x \leq x_{2i+1}.
				\end{cases}
			\]
		\end{itemize}
	\end{definition}
	
	We will show that the bricks in $\ReppreI$ are certain kinds of sawtooth modules.
	To do this, we first must prove the following lemma.
	
	\begin{lemma}\label{lem:indecomposable + no push up yields sawtooth}
		Let $M$ be an indecomposable in $\ReppreI$ such that $\supp M$ is not a singleton and $M$ is not deep.
		Then $M$ is a sawtooth module.
	\end{lemma}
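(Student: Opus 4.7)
The plan is to verify each condition of Definition~\ref{def:sawtooth module} in turn, exploiting the following consequence of non-deepness: for every $x<y$ in $\supp M$, both compositions $M(\alpha^*_{xy})\circ M(\alpha_{yx})$ and $M(\alpha_{yx})\circ M(\alpha^*_{xy})$ vanish, since each is a positive-length loop at a point of $\supp M$. My first task would be to show that the closure of $\supp M$ in $[0,1]$ is a closed interval $[a,b]$. If not, $\II$ would contain an open subinterval $(y_1,y_2)$ disjoint from $\supp M$ with points of $\supp M$ on both sides. Using the factorization relation $\alpha_{vu}=\alpha_{vy'}\alpha_{y'u}$ (and its dual) at some $y'\in(y_1,y_2)$ with $M(y')=0$, every path-like morphism across the gap is killed by $M$; the restrictions of $M$ to the two halves then provide a direct-sum decomposition, contradicting indecomposability.

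Next, I would show that $\dim M(x)=1$ for every $x\in\supp M$. I expect this to be the main obstacle of the proof. Assume for contradiction that $\dim M(x_0)\geq 2$ for some $x_0\in\supp M$ and aim to produce a proper direct summand. Set $V_L:=\sum_{y<x_0}\mathrm{im}\,M(\alpha_{x_0,y})$ and $V_R:=\sum_{y>x_0}\mathrm{im}\,M(\alpha^*_{x_0,y})$ inside $M(x_0)$. Any complement of $V_L+V_R$ in $M(x_0)$ would support a simple direct summand of $M$ concentrated at $x_0$, which is impossible since $\supp M$ is not a singleton and $M$ is indecomposable. Moreover, the sliding relation of Section~\ref{sec:category:definition for R} together with the forward-backward vanishing implies that $V_L$ and $V_R$ each propagate to proper submodules of $M$; compatibility of the resulting decomposition with all of the morphisms $M(f)$ then forces $V_L+V_R$ to be one-dimensional. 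The delicate point is verifying that vector-space choices at $x_0$ lift to compatible choices at every other $x\in\supp M$, particularly near accumulation points of the support, where the infinitesimal analysis is subtle and one must use the full strength of the relations in $\preI$.

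With one-dimensional fibers, at each $x$ interior to $\supp M$ at most one of the forward or backward maps on each side can be nonzero (else a positive-length loop at $x$ would be nonzero), and a point at which all neighbouring maps vanish would decompose $M$, again contradicting indecomposability. I would classify each such $x$ as right-flowing, left-flowing, a peak (incoming from both sides), or a trough (outgoing to both sides). Peaks and troughs must strictly alternate along $[a,b]$, producing a sequence $(x_i)_{i\in I}$ indexed by an interval $I\subseteq\ZZ\cup\{\pm\infty\}$, with peaks at even indices and troughs at odd indices, possibly accumulating toward $a$ or $b$. One sets $x_{\min I}=a$ and $x_{\max I}=b$ whenever these endpoints are attained. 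Define $\partial$ piecewise linearly on $[a,b]$ with slope $-1$ from each peak to the next trough on its right and slope $+1$ from each trough to the next peak on its right; conditions (4) and (5) of Definition~\ref{def:sawtooth} then hold by construction, and $\partial$ extends to a Lipschitz-$1$ function on $[0,1]$. Finally, the characterization of nonzero path-like maps in Definition~\ref{def:sawtooth module} follows by combining the local flow description with factorization of long path-like morphisms through intermediate peaks or troughs: a morphism spanning more than one monotone piece necessarily factors through a direction change, forcing its $M$-image to vanish.
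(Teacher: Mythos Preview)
Your proposal contains a genuine gap, and it lies in a step that the paper does not take at all. You identify ``$\dim M(x)=1$ for every $x\in\supp M$'' as the main obstacle and then concede that the argument is incomplete (``The delicate point is verifying that vector-space choices at $x_0$ lift to compatible choices at every other $x\in\supp M$\ldots''). But look again at Definition~\ref{def:sawtooth module}: it constrains only the closure of $\supp M$, the membership of the endpoints, and \emph{which} path-like morphisms $M(f)$ are nonzero. There is no requirement on $\dim M(x)$, so this step is not needed for the lemma as stated. Worse, your subsequent peak/trough classification leans on one-dimensionality to conclude that ``at most one of the forward or backward maps on each side can be nonzero (else a positive-length loop at $x$ would be nonzero)''; that inference fails in higher dimension, since two nonzero linear maps can compose to zero. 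So you cannot simply delete the dimension step without reworking what follows.

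The paper's proof bypasses fibre dimensions entirely. From an interior point $x$ it alternately forms sets such as $\{y\ge x_i \mid M(\alpha_{y,x_i})\neq 0\}$ and $\{y\ge x_i \mid M(\alpha^*_{x_i,y})\neq 0\}$ and takes their suprema to produce the turning points $x_{i+1}$ directly, using indecomposability to argue that an unattained supremum must coincide with the boundary of $\supp M$; the leftward $x_i$ are built symmetrically via infima. The sawtooth function $\partial$ is then defined piecewise-linearly on the $x_i$, and the last bullet of Definition~\ref{def:sawtooth module} is read off from how the sets $A_{x_i}$ were constructed, together with non-deepness handling all longer path-like morphisms. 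Your connectedness-of-support argument and your final construction of $\partial$ are close to this in spirit; the key difference is that the paper tracks the vanishing/nonvanishing of the maps $M(\alpha^{(*)}_{\cdot,\cdot})$ themselves rather than first reducing to scalar maps, and thereby avoids the gap you flagged.
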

	\begin{proof}
		Choose some $x\in \supp M$ such that $x\notin\{\inf\supp M, \lub\supp M\}$.
		Consider the following sets:
		\[
			\text{(i)}\ \{y\in [x,1)\mid M(\alpha_{yx})\neq 0\}
			\qquad \qquad
			\text{(ii)}\ \{y\in [x,1)\mid M(\alpha^*_{xy})\neq 0\}.
		\]
		By assumption on push up functions in $\End_{\ReppreI}(M)$, exactly one of these sets must be $\{x\}$ and the other must contain more than just $x$.
		The technique we will use is symmetric to either case.
		So, for simplicity of writing, we assume the set (i) contains more than just $x$ and (ii) is $\{x\}$.
		Set $A_x$ equal to set (i).
		
		If $\lub A_x\notin A_x$ then we see $\lub A_x=\lub\supp M\notin \supp M$, or else $M$ is not indecomposable.
		Let $y_0=\lub A_x$.
		Suppose $x_0\in A_x$ and $x_0 \lneq \lub\supp M$.
		Then we define $A_{x_0} = \{y\in [x_0,1) \mid M(\alpha^*_{x_0 y}\neq 0\}$, similar to set (ii) above.
		Again, either $\lub A_{x_0}\in A_{x_0}$ or $\lub A_{x_0}=\lub\supp M$.
		
		We proceed inductively where the $A_{x_{2i}}$'s are defined in style (ii) and the $A_{x_{2i+1}}$'s are defined in style (i).
		For each $x_i$, the other type of set must be a singleton, as before.
		If there is a largest $i$ in this process, then $\lub\supp M=x_i$ for some $i\in\ZZ$.
		If, for each $i\geq 0$ in $\ZZ$ there is an $x_{i+1}$, then the $x_i$'s must accumulate to $\lub\supp M$, or else $M$ is not indecomposable.
		Moreover, in this case, $\lub\supp M\notin \supp M$ for the same reason so we set $x_{+\infty}=\lub\supp M$.
		
		We now construct the sets $A^{x},A^{x_{-1}},A^{x_{-2}},\ldots$ where
		\begin{align*}
			A^x &=\{ y\in (0,x] \mid M(\alpha_{xy})\neq 0\} \\
			A^{x_{2i-1}} &= \{y\in (0,x_{2i-1}] \mid M(\alpha^*_{y x_{2i-1}})\neq 0\} \\
			A^{x_{2i}} &= \{y\in (0,x_{-2i}] \mid M(\alpha_{x_{-2i} y})\neq 0\}.
		\end{align*}
		Again, either this process terminates so $\inf\supp M=x_i$ for some $i\in \ZZ$ or $\lim_{i\to-\infty} x_i=\inf\supp M$ and $\inf\supp M\notin\supp M$.
		In the later case, $x_{-\infty}=\inf\supp M$.
	
		We now have enough to define a sawtooth function.
		Let $I\subset\ZZ\cup\{\pm\infty\}$ be the set such that $i\in I$ if $x_i$ is defined above.
		Define $\partial(x_0)=0$.
		Inductively, for each $i\in\ZZ$ such that $i\neq 0$, define
		\[
			\partial(x_i)=\begin{cases}
				\partial(x_{i+1}) + x_i - x_{i+1} & i < 0, i\text{ even} \\
				\partial(x_{i+1}) + x_{i+1} - x_i & i < 0, i\text{ odd} \\
				\partial(x_{i-1}) + x_{i-1} - x_i & i > 0, i\text{ even} \\
				\partial(x_{i-1}) + x_i - x_{i-1} & i > 0, i\text{ odd}.
			\end{cases}
		\]
		If $x_{\pm}$ has been defined, set
		\[
			\partial(x_{\pm\infty}) = \lim_{i\to\pm\infty} \partial(x_i),
		\]
		where the $\pm$'s must match.
		For points $x$ such that $x_i<x<x_{i+1}$, we define $\partial(x)$ to be the point such that $|x-x_i|=|\partial(x)-\partial(x_i)|$ and $|x-x_{i+1}|=|\partial(x)-\partial(x_{i+1})|$.
		Denote by $\overline{\supp M}$ the smallest closed subinterval of $[0,1]$ containing $\supp M$.
		For $x\in [0,1]$ such that $x\notin\overline{\supp M}$, define
		\[
			\partial(x) = \begin{cases}
				\partial(\inf\supp M) & x < \inf\supp M \\
				\partial(\lub\supp M) & x > \lub\supp M.
			\end{cases}
		\]
		It is straightforward to check that $\partial$ is a function as in Corollary~\ref{cor:permuton functions and decorous submodules} and that $\partial|_{\overline{\supp M}}$ is a sawtooth function.
		
		We have shown
		\begin{itemize}
			\item $[a,b]$ is the closure of $\supp M$ in $[0,1]$,
			\item for each $y<z$ in $[x_{2i},x_{2i+1}]$, $M(\alpha_{zy})=0$, and
			\item for each $y<z$ in $[x_{2i-1},x_{2i}]$, $M(\alpha^*_{yz})=0$.
		\end{itemize}
		Therefore, $M$ is a sawtooth module with sawtooth function $\partial|_{\overline{\supp M}}$.
	\end{proof}
	
	An immediate consequence of the lemma is that any brick must be a sawtooth module.
	But we can do better.
	
	Let $\sawtooth$ be a sawtooth function.
	
	For an interval $[a,b]\subset (0,1)$, define the map $\zeta_M:[a,b]\to \RR$ by
	\[
		\zeta(x) = \begin{cases}
		\tan( (x-\frac{a+b}{2})\cdot \frac{\pi}{b-a} ) & x \leq \frac{a+b}{2} \text{ and } (a=x_{-\infty}\text{ or }a=0) \\
		\tan( (x-\frac{a+b}{2})\cdot \frac{\pi}{b-a} ) & x \geq \frac{a+b}{2} \text{ and } (b=x_{+\infty}\text{ or }b=1) \\
		(x-\frac{a+b}{2})\cdot \frac{1}{b-a} & x \leq \frac{a+b}{2} \text{ and } 0<a=x_i, i\in\ZZ \\
		(x-\frac{a+b}{2})\cdot \frac{1}{b-a} & x \geq \frac{a+b}{2} \text{ and } 1>b=x_i, i\in\ZZ 
		\end{cases}
	\]
	We extend $\zeta$ so that we have a bijection in the following way.
	First,
	\begin{align*}
		\hat{a} &= \begin{cases} a & a=x_{-\infty} \\ 0 & a=x_i,i\in\ZZ \end{cases} &
		\hat{b} &= \begin{cases} b & b=x_{+\infty} \\ 1 & b=x_i,i\in\ZZ. \end{cases}
	\end{align*}
	Then we define $\hat{\zeta}:(\hat{a},\hat{b})\to \RR$ by
	\[
		\hat{\zeta}(x) = \begin{cases}
			\zeta(x) & x\in[a,b] \\
			\tan((x-a)\cdot \frac{\pi}{a-\hat{a}})-1 & \hat{a}<x<a \\
			\tan((x-b)\cdot \frac{\pi}{\hat{b}-b})+1 & b<x<\hat{b}.
		\end{cases}
	\]
	If $\hat{a}=a$ then the second line in the definition is not used.
	Similarly, if $\hat{b}=b$ then the third line in the definition is not used.
	If $\hat{a}<a$ then $\zeta(a)=-1$ and $\lim_{x\to a^-}\hat{\zeta}(x)=-1$.
	The similarly statement for $b<\hat{b}$ is also true.
	Thus, $\hat{\zeta}$ is a continuous strictly order preserving bijection from $(\hat{a},\hat{b})$ to $\RR$.
	
	A continuous quiver $Q$ of type $A$ is essentially a pair $(S,\preceq)$ where $S\subset R$ is a discrete set (possibly infinite) and $\preceq$ is a new partial order on $\RR$.
	Let $s,s'\in S$ such that $\not\exists s''\in S$ such that $s<s''<s'$.
	Then the order $\preceq$ on the set $[s,s']\subset\RR$ must be either the same as $\leq_\RR$ or the opposite.
	We think of $Q$ as a category where the objects are $\RR$ and there is a unique morphism $g_{yx}:x\to y$ in $Q$ if $y\preceq x$.
	A representation $M$ of $Q$ is a functor $Q\to \kVec$.
	It is pointwise finite-dimensional if $M(x)$ is a finite-dimensional vector space for each $x\in\RR$.
	By \cite[Theorem 2.3.2]{IRT23}, and independently \cite[Corollary 5.9]{BCB20}, all of the indecomposable representations of $Q$ are interval represenations in the same way the indecomposable representations of type $A_n$ are interval representations.
	By \cite[Theorem 3.0.1]{IRT23} these are also all bricks, just like the $A_n$ case.
	See \cite{IRT23} for more details and the introduction to the study of all representations of continuous quivers of type $A$.
	
	Let $Q$ be a continuous quiver of type $\mathbb{A}$ whose sinks are $\{\zeta(x_{2i})\}$ and whose sources are $\{\zeta(x_{2i+1})\}$.
	
	Then we define a functor $Z: Q\to \preI$ by 
	\begin{align*}
		Z(x) &= \hat{\zeta}^{-1}(x)
		&
		Z(g_{yx}:x\to y) &= \alpha^{(*)}_{\hat{\zeta}^{-1}(y),\hat{\zeta}^{-1}(x)}.
	\end{align*}

	It is quick to check that $Z$ is a functor.
	Then we have the induced functor $Z^*:\ReppreI\to \mathrm{Rep}(Q)$ on modules given by $M\mapsto M\circ Z$ on objects.
	
	\begin{proposition}\label{prop:sawtooth iso on endomorphism rings}
		Let $M$ be a sawtooth module with function $\sawtooth$ and induced functions $\zeta,\hat{\zeta}$.
		Let $Q$ be the corresponding continuous quiver of type $\mathbb{A}$ with functors $Z,Z^*$.
		Then $\End_{\ReppreI}(M)\cong \End_{\Rep(Q)}(Z^*M)$.
	\end{proposition}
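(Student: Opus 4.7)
The plan is to construct a $\Bbbk$-algebra isomorphism
\[
\Phi : \End_{\ReppreI}(M) \longrightarrow \End_{\Rep(Q)}(Z^*M)
\]
by restriction along $Z$, setting $\Phi(\phi) = Z^*\phi$ so that $(Z^*\phi)_x = \phi_{\hat{\zeta}^{-1}(x)}$ for each object $x$ of $Q$. Functoriality of $Z^*$ will immediately make $\Phi$ a $\Bbbk$-algebra homomorphism (it preserves composition and identity pointwise and is obviously additive).

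For injectivity, I would argue that if $Z^*\phi = 0$ then $\phi_y = 0$ for every $y \in (\hat{a},\hat{b})$, since $\hat{\zeta}^{-1}$ is a bijection onto $(\hat{a},\hat{b})$. The definitions of $\hat{a}$ and $\hat{b}$ ensure $\supp(M) \subseteq (\hat{a},\hat{b})$: an endpoint of $[a,b]$ lies in $\supp(M)$ only when it equals $x_i$ for integer $i$, in which case $\hat{a} = 0$ or $\hat{b} = 1$ strictly contains the endpoint; if instead the endpoint is $x_{\pm\infty}$ then it lies outside $\supp(M)$ altogether. Hence $\phi_y = 0$ for all $y \in \supp(M)$, giving $\phi = 0$.

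For surjectivity, given $\psi \in \End_{\Rep(Q)}(Z^*M)$, I would define $\phi_y := \psi_{\hat{\zeta}(y)}$ for $y \in (\hat{a},\hat{b})$ and $\phi_y := 0$ otherwise (which is forced since $M(y) = 0$ there), and then verify that $\phi$ is a natural transformation of $\preI$-modules. For every morphism $f : x \to y$ in $\preI$ with $M(f) \neq 0$, the final clause of Definition~\ref{def:sawtooth module} forces $f$ to be a distinguished minimal-length $\alpha_{yx}$ or $\alpha^*_{yx}$ lying within a single sawtooth segment; the orientation of this segment in $Q$ then produces a unique morphism $g$ with $Z(g) = f$, so naturality of $\psi$ with respect to $g$ yields
\[
M(f) \circ \phi_x = M(Z(g)) \circ \psi_{\hat{\zeta}(x)} = \psi_{\hat{\zeta}(y)} \circ M(Z(g)) = \phi_y \circ M(f).
\]
When $M(f) = 0$, naturality is trivial. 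A direct check will show that the two constructions are mutual inverses, confirming bijectivity.

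The main obstacle will be verifying naturality against every morphism of $\preI$, not merely those in the image of $Z$. The sawtooth hypothesis is what makes this manageable: all path-like morphisms in $\preI$ that are either of non-minimal length, running against the local orientation, or crossing from one sawtooth tooth to another, are sent to zero by $M$, so they fall under the trivial case. The remaining nontrivial morphisms are precisely the $Z$-images of $Q$-morphisms, and $\Bbbk$-linearity of $M$ extends the verification from path-like morphisms to arbitrary linear combinations, completing the proof.
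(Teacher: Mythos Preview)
Your proposal is correct and follows essentially the same route as the paper: both define the map as $\phi\mapsto Z^*\phi$, prove injectivity by noting that $\supp M$ lies in the domain of $\hat\zeta$ (the paper uses $\zeta$, which agrees on $[a,b]$), and prove surjectivity by lifting $\psi$ via $\phi_y=\psi_{\hat\zeta(y)}$ and splitting the naturality check into the case $M(f)\neq 0$ (where $f$ lies in the image of $Z$) and the trivial case $M(f)=0$. Your treatment is in fact slightly more explicit than the paper's about why $\supp M\subseteq(\hat a,\hat b)$ and about the role of the sawtooth hypothesis in killing all morphisms not in the image of $Z$.
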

	\begin{proof}
		First, note that if $a=\inf\supp M\in \supp M$ then $a=x_i$ for some $i\in\ZZ$ and so $\zeta(\inf\supp M)=-1$.
		The similar statement is true if $b=\lub\supp M\in\supp M$.
		Thus, $\supp M$ is in the domain of $\zeta$.
		
		Let $\phi\in\End_{\ReppreI}(M)$.
		Suppose $Z^*\phi=0$.
		Then, since $\supp Z^*M = \zeta(\supp M)$ we see $\phi=0$.
		
		Now suppose $\bar{\phi}\in \End_{\Rep(Q)}(Z^*M)$ and $\bar{\phi}\neq 0$.
		Then define $\phi:M\to M$ by $\phi_x=\bar{\phi}_{\zeta(x)}$.
		If $x_{2i} \leq y \leq x \leq x_{2i+1}$ with $x,y\in\supp M$ then $M(\alpha^*_{yx})\neq 0$ and $Z^*M(g_{\zeta(y)\zeta(x)})=M(\alpha^*_{yx})$.
		Thus,
		\begin{align*}
			\phi_y M(\alpha^*_{yx}) &= \bar{\phi}_{\zeta(y)}M(g_{\zeta(y)\zeta(x)}) \\
			&= M(g_{\zeta(y)\zeta(x)}) \bar{\phi}_{\zeta(x)} \\
			&= M(\alpha^*_{yx})\phi_x.
		\end{align*}
		Similarly, if $x_{2i-1}\leq x \leq y \leq x_{2i}$ with $x,y\in\supp M$ then $\phi_y M(\alpha_{yx})=M(\alpha_{yx})\phi_x$.
		
		For all other path like $f$ in $\preI$, we know $M(f)=0$ and so trivially $\phi_y M(f)=0=M(f)\phi_x$.
		Thus, $\phi\in\End_{\ReppreI}(M)$ and, by definition, $Z^*\phi=\bar{\phi}$.
		Therefore, $\End_{\ReppreI}(M)\cong \End_{\Rep(Q)}(Z^*M)$.
	\end{proof}
	
	Notice that the bijection in Proposition~\ref{prop:sawtooth iso on endomorphism rings} is induced by $Z^*$.
	
	The relation of our brick modules to the result just after Theorem 0.4 in \cite{A22} comes from the following theorem.
	
	\begin{theorem}\label{thm:bricks are sawtooth modules from continuous A quivers}
		Let $M$ in $\ReppreI$ be a module.
		Then $M$ is a brick if and only $M$ is simple or $M$ is a sawtooth module and the module $Z^*M$ is also a brick in $\Rep(Q)$, where $Q$ and $Z^*$ are induced from the sawtooth function $\sawtooth$ given by $M$.
	\end{theorem}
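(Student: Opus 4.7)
The plan is to split into the two directions of the biconditional and handle a singleton-support corner case separately. Recall that any brick is indecomposable, since its endomorphism ring being a division ring forbids nontrivial idempotents, and that Proposition~\ref{prop:deep modules are not bricks} already rules out deep modules. This gives us most of what we need to apply Lemma~\ref{lem:indecomposable + no push up yields sawtooth}.

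For the forward direction, suppose $M$ is a brick. I will first consider the case $\supp M=\{x\}$ and show that $M$ must be simple. Since $M$ is not deep (else by Proposition~\ref{prop:deep modules are not bricks} it is not a brick), every path like $f\in\End_{\preI}(x)$ of positive length acts as zero on $M(x)$, and by the structure of a functor with singleton support, $\End_{\ReppreI}(M)=\End_{\Bbbk}(M(x))$. For this to be a division ring, $\dim_{\Bbbk} M(x)=1$, so $M$ is the simple module at $x$. If instead $\supp M$ is not a singleton, then $M$ is indecomposable (bricks are) and not deep, so Lemma~\ref{lem:indecomposable + no push up yields sawtooth} tells us $M$ is a sawtooth module. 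Proposition~\ref{prop:sawtooth iso on endomorphism rings} then provides an isomorphism $\End_{\ReppreI}(M)\cong \End_{\Rep(Q)}(Z^*M)$, so $Z^*M$ is also a brick in $\Rep(Q)$.

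For the backward direction, if $M$ is simple then $\End_{\ReppreI}(M)$ is a division ring (standard in any $\Bbbk$-linear abelian category: Schur's lemma applies since $\Bbbk$ is algebraically closed and $M$ is one-dimensional pointwise on its singleton support), so $M$ is a brick. If $M$ is a sawtooth module with $Z^*M$ a brick in $\Rep(Q)$, then Proposition~\ref{prop:sawtooth iso on endomorphism rings} again gives $\End_{\ReppreI}(M)\cong\End_{\Rep(Q)}(Z^*M)$, hence $\End_{\ReppreI}(M)$ is a division ring and $M$ is a brick.

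The hard work has already been absorbed into Proposition~\ref{prop:deep modules are not bricks}, Lemma~\ref{lem:indecomposable + no push up yields sawtooth}, and Proposition~\ref{prop:sawtooth iso on endomorphism rings}, so the only delicate point remaining is the singleton-support analysis; the main subtlety there is verifying that no positive-length endomorphism of $x$ can act nontrivially on $M(x)$ (which is exactly the non-deep condition forced by the brick hypothesis) and then concluding $\dim M(x)=1$. Once that is in hand, the two directions assemble directly from the cited results.
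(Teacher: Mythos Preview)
Your proof is correct and follows essentially the same route as the paper: both directions rest on Proposition~\ref{prop:deep modules are not bricks}, Lemma~\ref{lem:indecomposable + no push up yields sawtooth}, and Proposition~\ref{prop:sawtooth iso on endomorphism rings}. Your treatment of the singleton-support case is actually more explicit than the paper's (the paper passes directly from ``not simple'' to invoking Lemma~\ref{lem:indecomposable + no push up yields sawtooth} without spelling out why the support cannot be a singleton), so your version closes a small expository gap.
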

	\begin{proof}
		The converse direction is trivial for simple modules and trivial for other modules by Proposition~\ref{prop:sawtooth iso on endomorphism rings}.
		
		Now suppose $M$ is a brick.
		If $M$ is simple we are done, so suppose not.
		Since $M$ is a brick, $M$ is indecomposable and $\End_{\ReppreI}(M)\cong\Bbbk$.
		By Proposition~\ref{prop:deep modules are not bricks} we know $M$ is not deep.
		Then, by Lemma~\ref{lem:indecomposable + no push up yields sawtooth} we see $M$ is a sawtooth module.
		So we have the functor $Z^*:\ReppreI\to \Rep(Q)$ from before.
		Then, by Proposition~\ref{prop:sawtooth iso on endomorphism rings} we see that $\End_{\ReppreI}(M)\cong\End_{\Rep(Q)}(Z^*M)\cong \Bbbk$.
	\end{proof}
	
	Finally, we relate bricks back to decorous submodules of representable projectives.
	\begin{proposition}\label{prop:bricks and decorous}
		Let $M$ be a brick in $\ReppreI$ with sawtooth function $\sawtooth$.
		Assume either $x_{\min I}>0$ or $\min I$ is not odd and assume either $x_{\max I}< 1$ or $\max I$ is not odd.
		Then there is a representable projective $P_z=\Hom_{\ReppreI}(z,-)$ and decorous submodule $\widehat{M}\subset P_z$ such that $M$ is a quotient of $\widehat{M}$.
	\end{proposition}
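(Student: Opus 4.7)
The plan is to extend the sawtooth function $\sawtooth$ to a $1$-Lipschitz function on all of $[0,1]$, invoke Corollary~\ref{cor:permuton functions and decorous submodules} to obtain a representable projective $P_z$ and a decorous submodule $\widehat{M}\subset P_z$ whose boundary agrees with $\sawtooth$ on $[a,b]$, and then define a surjection $\widehat{M}\twoheadrightarrow M$ sending, at each $y\in\supp M$, the unique path like morphism in $\widehat{M}(y)$ of minimum length to a generator of $M(y)\cong\Bbbk$.

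First I would extend $\partial$ by constants, setting $\partial(y):=\partial(a)$ for $y\in[0,a]$ and $\partial(y):=\partial(b)$ for $y\in[b,1]$. This is $1$-Lipschitz on each of the three pieces and continuous at the joins, hence $1$-Lipschitz on $[0,1]$. The essential point is to verify $|\partial(1)-\partial(0)|=|\partial(b)-\partial(a)|<1$, the non-degeneracy hypothesis of Corollary~\ref{cor:permuton functions and decorous submodules}. Because $\partial$ has slopes $\pm 1$ with alternating signs on the segments between consecutive $x_i$'s, the net change $|\partial(b)-\partial(a)|$ equals $b-a$ if and only if $|I|=2$ (a single monotone tooth); when $|I|\geq 3$, the alternating signs cause cancellation and $|\partial(b)-\partial(a)|<b-a\leq 1$. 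In the remaining case $|I|=2$, say $I=\{i,i+1\}$, equality $|\partial(b)-\partial(a)|=1$ would force $a=0$ and $b=1$; but $i$ and $i+1$ have opposite parities, so one of them is odd, contradicting the proposition's hypothesis, which forbids $x_{\min I}=0$ with $\min I$ odd and also $x_{\max I}=1$ with $\max I$ odd.

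Applying Corollary~\ref{cor:permuton functions and decorous submodules} then produces a decorous submodule $\widehat{M}\subset P_z$ with $z=\tfrac12(1+\partial(0)-\partial(1))$ whose boundary function is the translate $\upbnd(y)=\partial(y)-\partial(0)+z$. I define $\phi_y:\widehat{M}(y)\to M(y)$ on the canonical basis of path like morphisms in $\widehat{M}(y)$: for $y\in\supp M$, send the unique path like morphism of length $\upbnd(y)$ to a fixed generator $v_y$ of $M(y)$, and send all path like morphisms of strictly greater length to $0$; for $y\notin\supp M$, set $\phi_y=0$. Surjectivity of the resulting $\phi:\widehat{M}\to M$ is then immediate from the construction.

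The main obstacle is verifying naturality, i.e.\ $\phi_w\circ\widehat{M}(g)=M(g)\circ\phi_y$ for every path like $g:y\to w$ in $\preI$. When $g$ is a simple $\alpha_{wy}$ or $\alpha^*_{wy}$ with $y,w$ lying in a single monotone segment of the sawtooth (the only setting in which $M(g)\neq 0$), the slope $\pm 1$ of $\sawtooth$ on that segment yields $\upbnd(w)=\upbnd(y)\pm|w-y|$ with the correct sign, so composing the length-$\upbnd(y)$ generator with $g$ produces exactly the length-$\upbnd(w)$ generator, and both sides take the value $v_w$. In every other case---$g$ has length strictly greater than $|w-y|$, or $y$ and $w$ straddle a peak or valley of $\sawtooth$, or one of them lies in the constant-extension region outside $\supp M$---the Lipschitz comparison is strict, $\upbnd(w)-\upbnd(y)<|w-y|$, because either the $\pm 1$ slopes of $\partial$ partially cancel across the turn or $\partial$ is identically constant on a neighborhood. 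This strict gap forces $\len(gf)>\upbnd(w)$ whenever $\len(f)\geq\upbnd(y)$, hence $\phi_w(gf)=0$, matching $M(g)\circ\phi_y$, which vanishes since $M(g)=0$. Thus $\phi$ is a well-defined surjective morphism of $\preI$-modules exhibiting $M$ as a quotient of $\widehat{M}$.
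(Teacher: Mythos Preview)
Your proof is correct and follows essentially the same approach as the paper: extend $\partial$ by constants to all of $[0,1]$, invoke Corollary~\ref{cor:permuton functions and decorous submodules} to get the decorous submodule $\widehat M$, and define the quotient map by sending the minimal-length path like generator $f_y\in\widehat M(y)$ to a generator of $M(y)$ and all longer path like elements to $0$. You add an explicit verification of the non-degeneracy condition $|\partial(1)-\partial(0)|<1$ required by the corollary, which the paper's proof passes over silently; this is a genuine addition.

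Two small slips in your naturality case analysis do not affect correctness, but are worth noting. First, in your subcase ``$g$ has length strictly greater than $|w-y|$'' you assert the strict Lipschitz inequality $\upbnd(w)-\upbnd(y)<|w-y|$; that can fail (take $y<w$ both in a slope $+1$ segment). What you actually need there, and what holds, is $\upbnd(w)-\upbnd(y)\leq|w-y|<\len(g)$. Second, your list of subcases for $M(g)=0$ with $\len(g)=|w-y|$ omits the ``wrong-direction $\alpha^{(*)}$ within a single monotone segment'' situation (e.g.\ $g=\alpha^*_{wy}$ with $w<y$ both in a slope $+1$ segment). In that case $\upbnd(w)-\upbnd(y)$ has the opposite sign to $|w-y|$, so the strict inequality you want does hold; the case is just missing from the enumeration. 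Finally, the paper explicitly checks $\supp M\subseteq\supp\widehat M$ (this is where the odd-endpoint hypotheses are used a second time), whereas you use it implicitly when you assume the minimal-length element of $\widehat M(y)$ exists for $y\in\supp M$; it would be worth stating.
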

	\begin{proof}
		Let $\widehat{M}$ be the decorous submodule of $P_z$ corresponding to $\partial$, by Corollary~\ref{cor:permuton functions and decorous submodules}.
		If $\min I$ is odd, then $x_{\min I} > 0$ and so $x_{\min I}\in \supp \widehat{M}$ if $x_{\min I}>0$.
		Similarly, $x_{\max I}\in \supp \widehat{M}$ if $x_{\max I}<1$.
		Thus, $\supp M\subset\supp \widehat{M}$.
		
		We now define $\phi:\widehat{M}\to M$.
		For each $x\in (0,1)\setminus\supp M$, set $\phi_x=0$.
		For $x\in\supp M$, define $\phi_x(f_y)=1\in\Bbbk=M(x)$ and define $\phi_x(f)=0$ for all other path like elements in $M(x)$.
		By the Definition~\ref{def:sawtooth module}, we know that $M(f)=0$ if $f$ is not one of $\alpha_{yx}$ for $x_{2i-1}\leq x \leq y \leq x_{2i}$ or $\alpha^*_{yx}$ for $x_{2i} \leq y \leq x \leq x_{2i+1}$.
		In these cases,
		\[
			\phi_y\widehat{M}(\alpha^{(*)}_{yx})(f_x)=\phi_y(f_y)=1=M(\alpha^{(*)}_{yx})(1)=M(\alpha^{(*)}_{yx})\phi_x(f_x).
		\]
		
		For all other path like $h:x\to y$ in $\preI$, we know $\widehat{M}(h)(f_x)\neq f_y$, by construction.
		Thus, $\phi_y\widehat{M}(h)(f_x)=0=M(h)\phi_x(f_x)$.
		Therefore, $\phi:\widehat{M}\to M$ is a morphism in $\ReppreI$ and we see that it is also surjective.
	\end{proof}
	
	If $\min I$ is odd and $x_{\min I}=0$ then the proposition fails.
	This is because $(x_{\min I},x_{\min I+1})\cap\supp\widehat{M}=\emptyset$ but $(x_{\min I},x_{\min I+1})\subset\supp M$.
	Similarly, the proposition fails if $x_{\max I}=1$ and $\max I$ is odd.
	
	It is possible to ``discretize'' the methods in this section to preprojective algebras of type $\mathbb{A}_n$ and recover the result just after Theorem 0.4 in \cite{A22}.

\section{Permutons and ideals in the continuous preprojective algebra}\label{sec:permutons}

Given the result of Mizuno recalled earlier as Theorem \ref{mizuno-thm}, it is natural to wonder if there is a kind of ``continuous permutation'' which defines an ideal in the continuous preprojective algebra. It turns out that the correct notion of continuous permutation is not the naive one (permutations of the points in $(0,1)$), but rather the notion of \emph{permuton}, which was introduced in order to study the limit of a sequence of permutations of increasing size \cite{HK}.

We follow the slightly later \cite{GGKK} (which was also the first paper to use the term ``permuton'') in defining a permuton as a measure $\mu$ on the $\sigma$-algebra of Borel sets of $[0,1]\times [0,1]$ such that
$\mu([0,k]\times[0,1])=k=\mu([0,1]\times[0,k])$.

A permutation $w\in\S_n$ defines a permuton $\gamma_w$ as follows. Divide the unit square up into $n$ equal rows and $n$ equal columns, and then put a uniform measure of weight $1/n$ on each of the squares $(i,w(i))$. General permutons arise as limits in a suitable sense of those coming from permutations.

Here is the permuton for the permutation $w=25341$. (We maintain our convention that $y$ increases downwards.) Each grey square has total measure $1/5$.

$$\begin{tikzpicture}[xscale=1.5,yscale=-1.5]
  \foreach \i in {0,5} 
           { \draw (0,\i/5)--(1,\i/5);
             \draw (\i/5,0)--(\i/5,1); }
       \draw [fill=gray, fill opacity=.5, draw opacity=0]   (1/5,2/5) rectangle (0,1/5);
       \draw [fill=gray, fill opacity=.5, draw opacity=0]    (2/5,5/5) rectangle (1/5,4/5);
       \draw [fill=gray, fill opacity=.5, draw opacity=0]    (3/5,3/5) rectangle (2/5,2/5);
        \draw [fill=gray, fill opacity=.5, draw opacity=0]   (4/5,4/5) rectangle (3/5,3/5);
       \draw [fill=gray, fill opacity=.5, draw opacity=0]    (5/5,1/5) rectangle (4/5,0/5);
\end{tikzpicture}$$

We will now show how to associate to an arbitrary permuton $\mu$ an ideal in the continuous preprojective algebra on $(0,1)$.

A function $f$ from $(0,1)$ to $\mathbb R$ satisfying that if $0<x<y<1$, then $|f(y)-f(x)|\leq y-x$ is necessarily continuous, so it has well-defined values at 0 and 1, which we refer to as $f(0)$ and $f(1)$. For $k\in(0,1)$, define a set of functions $\mathcal B_k$ to be those functions satisfying the previous property, such that as well $f(0)=k$ and $f(1)=1-k$.

We now consider right modules, which are equivalent to functors $\preI\to\kVec$ (left modules) since $\preI$ is canonically isomorphic to $\preI^{\text{op}}$ (Remark~\ref{rmk:preI and preIop}).
In particular, the results from Sections~\ref{sec:category:projectives}~and~\ref{sec:subs and quots of projs} apply.

\begin{lemma}\label{lem:decorous from permuton} The functions defining a decorous submodule of $P_k$ are exactly the functions in $\mathcal B_k$. \end{lemma}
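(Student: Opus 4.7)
The plan is to identify the two classes of functions via Proposition~\ref{prop:boundaries and decorous modules}, which already gives a bijection between decorous submodules of $P_x$ and functions $\partial$ with the prescribed Lipschitz bound and boundary values. Setting $x=k$ and $\II=(0,1)$, the proposition tells us that decorous submodules of $P_k$ correspond exactly to functions $\partial:[0,1]\to\RR_{>0}$ satisfying $\partial(0)=k$, $\partial(1)=1-k$, and $|\partial(y)-\partial(z)|\leq |y-z|$ for all $y,z\in(0,1)$.

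The first step is to check that any such $\partial$ is automatically continuous on $(0,1)$ (immediate from the Lipschitz inequality), and so extends uniquely to a continuous function on $[0,1]$ whose boundary values are $k$ and $1-k$. This matches the definition of $\mathcal{B}_k$: functions on $(0,1)$ with $|f(y)-f(x)|\leq y-x$ for $0<x<y<1$, continuously extended to the endpoints with $f(0)=k$, $f(1)=1-k$. Thus the conditions defining $\mathcal{B}_k$ are literally the conditions appearing in Proposition~\ref{prop:boundaries and decorous modules}, once we note that the Lipschitz constant extends to the closed interval by continuity.

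The second step is a small bookkeeping point: Proposition~\ref{prop:boundaries and decorous modules} requires codomain $\RR_{>0}$, while the definition of $\mathcal{B}_k$ takes values in $\RR$. We need to check that a function in $\mathcal{B}_k$ necessarily takes positive values. This follows because $f(0)=k>0$ and the Lipschitz condition with constant $1$ forces $f(y)\geq f(0)-y = k-y$; since $f(1)=1-k>0$ and the function interpolates Lipschitz-continuously, one checks that the natural decorous submodule described by Proposition~\ref{prop:boundaries and decorous modules} is well-defined even when $f$ dips nonpositive---but in fact the representative we care about (the one given by $\upbnd$ of the submodule) is always nonnegative since it records lengths of morphisms. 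The equivalence class of $\partial$'s under the shift relation in Corollary~\ref{cor:permuton functions and decorous submodules} always contains a positive representative, and the normalization $\partial(0)+\partial(1)=1$ (noted just after that corollary) selects exactly the one with $\partial(0)=k$, $\partial(1)=1-k$.

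The only potential obstacle is therefore a slight mismatch in domain conventions (open vs. closed interval, strict vs. nonstrict positivity), which is resolved by the continuous extension and the fact that the function $\upbnd$ naturally takes values in $\RR_{\geq 0}$. Combining these observations yields the direct identification $\mathcal{B}_k = \{\partial : \partial \text{ defines a decorous submodule of } P_k\}$, which is the content of the lemma.
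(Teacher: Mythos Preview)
Your proposal is correct and takes essentially the same approach as the paper: the paper's proof is the one-line ``Use Corollary~\ref{cor:permuton functions and decorous submodules},'' and you instead invoke the underlying Proposition~\ref{prop:boundaries and decorous modules} and spell out the bookkeeping (continuous extension to endpoints, the normalization $\partial(0)+\partial(1)=1$). Your discussion of positivity is slightly more than needed---in fact any $f\in\mathcal B_k$ satisfies $f(y)\geq\max(k-y,\,y-k)=|y-k|\geq 0$, with equality only at $y=k$ for the function $f(y)=|y-k|$ corresponding to $P_k$ itself---so the codomain issue is harmless.
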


\begin{proof} Use Corollary~\ref{cor:permuton functions and decorous submodules}. \end{proof}

For $f\in \mathcal B_k$, let $D_f$ be the submodule of $P_k$ defined by $f$. Similarly, let $U_f=P_k/D_f$.

\begin{lemma} Let $\mu$ be a permuton. For fixed $y$, the function
  $$f_{\mu,y}(x)=-2\mu([0,x]\times[0,y])+y+x$$
  belongs to $\mathcal B_y$.\end{lemma}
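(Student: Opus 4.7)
The plan is to verify directly the three defining conditions of $\mathcal{B}_y$: the boundary values $f_{\mu,y}(0)=y$ and $f_{\mu,y}(1)=1-y$, and the $1$-Lipschitz condition $|f_{\mu,y}(x_1)-f_{\mu,y}(x_2)|\leq |x_1-x_2|$. All three should follow rather quickly from the marginal conditions $\mu([0,s]\times[0,1])=s=\mu([0,1]\times[0,s])$ that characterize a permuton; no deeper property is needed.

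First I would handle the boundary values. Since $[0,0]\times[0,y]$ is a null set (a vertical segment), $\mu([0,0]\times[0,y])=0$, so $f_{\mu,y}(0)=0+y+0=y$. For $x=1$, the marginal condition gives $\mu([0,1]\times[0,y])=y$, so $f_{\mu,y}(1)=-2y+y+1=1-y$. (Technically, $f_{\mu,y}$ is defined on $(0,1)$, but once the Lipschitz property is established the function is continuous and extends to $[0,1]$, so these computations are read as the boundary values $f_{\mu,y}(0)$ and $f_{\mu,y}(1)$ in the sense used in $\mathcal{B}_y$.)

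Next I would handle the Lipschitz estimate. For $0<x_1<x_2<1$, additivity of $\mu$ gives
\[
f_{\mu,y}(x_2)-f_{\mu,y}(x_1)=-2\mu\bigl((x_1,x_2]\times[0,y]\bigr)+(x_2-x_1).
\]
The key inequality is the two-sided bound
\[
0\leq \mu\bigl((x_1,x_2]\times[0,y]\bigr)\leq \mu\bigl((x_1,x_2]\times[0,1]\bigr)=x_2-x_1,
\]
where the first inequality is nonnegativity of the measure and the second uses monotonicity together with the first marginal condition. Substituting the two extremes produces
\[
-(x_2-x_1)\leq f_{\mu,y}(x_2)-f_{\mu,y}(x_1)\leq x_2-x_1,
\]
which is the required $1$-Lipschitz bound.

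There is essentially no obstacle here; the only point requiring any care is the reduction of $\mu([0,x_2]\times[0,y])-\mu([0,x_1]\times[0,y])$ to $\mu((x_1,x_2]\times[0,y])$, which is immediate from countable additivity because $[0,x_1]\times[0,y]$ and $(x_1,x_2]\times[0,y]$ are disjoint with union $[0,x_2]\times[0,y]$. Thus the proof is little more than unpacking the definition of a permuton and applying the marginal conditions.
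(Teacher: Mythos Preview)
Your proof is correct and follows essentially the same approach as the paper: the paper records exactly the same key inequality $0\leq \mu((x_1,x_2]\times[0,y])\leq \mu((x_1,x_2]\times[0,1])=x_2-x_1$ and then declares the rest ``completely straightforward,'' while you spell out the boundary values and the Lipschitz deduction explicitly. One tiny remark: the justification that $\mu(\{0\}\times[0,y])=0$ is not that a segment is automatically null for an arbitrary measure, but rather that the marginal condition gives $\mu(\{0\}\times[0,1])=0$ and monotonicity finishes it.
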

\begin{proof} For $x_1\leq x_2$, we have $$\mu([0,x_1]\times[0,y])-\mu([0,x_2]\times[0,y])=\mu((x_1,x_2]\times[0,y)$$ and 
          \begin{equation*}0\leq \mu((x_1,x_2]\times[0,y)\leq \mu((x_1,x_2]\times[0,1])=x_2-x_1.\end{equation*} The remainder of the proof is completely straightforward. \end{proof}

Thus, we can define the permuton ideal:
$$I_\mu = \bigoplus_{y\in(0,1)} D_{f_{\mu,y}}.$$
It is a right ideal of the continuous preprojective algebra, since it is a submodule of the continuous preprojective algebra viewed as a (right) module over itself. In fact, analogously to the discrete case, we have the following:

\begin{proposition}\label{prop:permuton ideal is two sided} For $\mu$ a permuton, $I_\mu$ is a two-sided ideal. \end{proposition}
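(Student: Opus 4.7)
The plan is to reduce the two-sidedness of $I_\mu$ to a Lipschitz-type bound on the family $\{f_{\mu,y}\}_{y\in(0,1)}$, and then verify that bound directly from the non-negativity of $\mu$. Since $I_\mu$ is already known to be a right ideal, the remaining task is closure under pre-composition: for any path-like $h\colon y'\to y$ in $\preI$ and any path-like $f\in D_{f_{\mu,y}}(z)\subseteq \Hom_{\preI}(y,z)$, I need to show that $f\circ h$ lies in $D_{f_{\mu,y'}}(z)\subseteq\Hom_{\preI}(y',z)$. The reduction to path-like morphisms is by $\Bbbk$-bilinearity of composition.

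If $f\circ h=0$ there is nothing to check, so assume otherwise. Then $f\circ h$ is a nonzero path-like morphism with $\len(f\circ h)=\len(f)+\len(h)$, and by Lemma~\ref{lem:submodules of projectives are downward closed} together with Proposition~\ref{prop:boundaries and decorous modules}, membership in $D_{f_{\mu,y'}}(z)$ is equivalent to $\len(f\circ h)\geq f_{\mu,y'}(z)$. Since $f\in D_{f_{\mu,y}}(z)$ gives $\len(f)\geq f_{\mu,y}(z)$, and since $h\colon y'\to y$ path-like gives $\len(h)\geq |y-y'|$, the entire claim reduces to the inequality
\[
f_{\mu,y'}(z) - f_{\mu,y}(z) \leq |y-y'|.
\]
Taking $y\leq y'$ without loss of generality, a direct expansion from the definition gives
\[
f_{\mu,y'}(z) - f_{\mu,y}(z) = (y'-y) - 2\mu\bigl([0,z]\times(y,y']\bigr) \leq y'-y,
\]
which holds because $\mu$ is a non-negative measure.

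I expect the main obstacle to be purely bookkeeping: correctly identifying that the non-trivial direction for two-sidedness corresponds to pre-composition, which shifts a morphism into a \emph{different} summand $D_{f_{\mu,y'}}$ of $I_\mu$ (rather than keeping it inside the same summand). Once this is set up, the downward-closedness machinery from Section~\ref{sec:subs and quots of projs} hands the problem over to the Lipschitz-type bound above, which in this direction is immediate from non-negativity of $\mu$; the uniform-marginal property of the permuton is not even needed here (it would only be needed for the symmetric lower bound on the same difference).
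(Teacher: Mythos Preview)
Your reduction to the inequality $f_{\mu,y'}(z)-f_{\mu,y}(z)\leq |y-y'|$ is exactly right and matches the paper's approach. The gap is the phrase ``without loss of generality.'' The roles of $y$ and $y'$ in that inequality are not interchangeable: $y$ indexes the summand where the element $f$ sits, and $y'$ indexes the summand where $f\circ h$ lands after pre-composition by $h\colon y'\to y$. So you genuinely need the inequality for \emph{both} orderings of $y$ and $y'$; only together are they equivalent to the two-sided bound $|f_{\mu,y'}(z)-f_{\mu,y}(z)|\leq |y-y'|$.

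For $y\leq y'$ your computation gives
\[
f_{\mu,y'}(z)-f_{\mu,y}(z)=(y'-y)-2\mu\bigl([0,z]\times(y,y']\bigr)\leq y'-y,
\]
which indeed only uses non-negativity. But for $y'<y$ the same expansion gives
\[
f_{\mu,y'}(z)-f_{\mu,y}(z)=-(y-y')+2\mu\bigl([0,z]\times(y',y]\bigr),
\]
and bounding this by $y-y'$ is equivalent to $\mu\bigl([0,z]\times(y',y]\bigr)\leq y-y'$. That requires the uniform marginal $\mu([0,1]\times[0,k])=k$, so your closing remark that the permuton property ``is not even needed here'' is precisely the error: the ``symmetric lower bound'' you set aside is the missing case. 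This is also the inequality the paper spells out explicitly, deducing it from $\mu([0,x]\times(p,q])\leq q-p$. Once you add this one line, your argument is complete and coincides with the paper's.
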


\begin{proof} We need to check that an element of $I_\mu$, multiplied on the left by an element of the continuous preprojective algebra, still belongs to $I_\mu$. It suffices to check for $a\in I_\mu^q$ and $p<q<r$ that $\alpha^*_{pq}(a)\in I_\mu^p$ and $\alpha_{rq}(a)\in I_\mu^r$. We only consider the case of $\alpha^*_{pq}(a)$; the other is the same. The map induced by $\alpha^*_{pq}$ from $P_q$ to $P_p$ sends the top of $P_q$ to the top point of $P_p$ in the $x=q$ column, which moves $P_q$ down by $q-p$.
  Thereby, $\alpha^*_{pq}$ induces a map from $I_{\mu,q}$ to $P_p$. Since the function defining the bottom of $P_p$ is $1-|x-(1-p)|$, the image of $I_{\mu,q}$ is defined by the function
  $$g(x)=\min(1-|x-(1-p)|, f_{\mu,q}(x)+(q-p)).$$
  We must check $g(x)\geq f_{\mu,p}$. Since $1-|x-(1-p)|$ is the lower boundary of $P_p$, it is clear that $1-|x-(1-p)|\geq f_{\mu,p}$. We also check
  \begin{align*} f_{\mu,q}(x)+(q-p)&=-2\mu([0,x]\times[0,q]) +(q-x) + (q-p)\\
    &=-2\mu([0,x]\times[0,p] + (p-x)  + 2(q-p)-2\mu([0,x]\times (p,q])\\
        &=f_{\mu,p}(x)+2(q-p-\mu([0,x]\times(p,q])\end{align*}

  Since $\mu([0,x]\times(p,q])\leq q-p$, it follows that $g(x)\geq f_{\mu,p}(x)$, as desired.
\end{proof}

\begin{example}
  Let us consider some examples of permuton ideals.
 \begin{enumerate} 
  \item For the permuton $\mu_\id$ which assigns to a set the measure of its intersection with the line $y=x$ (and scaled by $1/\sqrt 2$), the function $f_{\mu_\id,y}(x)=-2\min(x,y)+y+x$. The function $f_{\mu_\id,y}$ describes the top border of $P_y$. Thus $I_{\mu_{\id}}=\Lambda_\Lambda$, where $\Lambda=\Lambda_{(0,1)}$.

\item Similarly, if we take the permuton defined instead using the line $x+y=1$, we determine that the corresponding permuton ideal is the zero ideal.

\item If we take the permuton $\mu_\unif$, defined by the uniform measure, we see that $(I_{\mu_{\unif}})^i$ is the bottom half of $P_i$, cut off by a line segment joining the left and right corners of $P_i$.

\item Finally, if we take the permuton $\gamma_w$ for $w=25341$, the submodules $I_{\gamma_w}^{1/5}$, $I_{\gamma_w}^{2/5}$, $I_{\gamma_w}^{3/5}$, $I_{\gamma_w}^{4/5}$ are as follows:

$$\begin{tikzpicture}[scale=1.5]
    \draw[gray] (0,4/5)--(1/5,1)--(1,1/5)--(4/5,0)--(0,4/5);
    \draw[ultra thick] (0,4/5)--(4/5,0)--(1,1/5);
\end{tikzpicture}
\qquad
\begin{tikzpicture}[scale=1.5]
  \draw[fill=blue, fill opacity=.25] (4/5,1/5)--(3/5,0)--(0,3/5)--(1/5,4/5)--(4/5,1/5);
  \draw[gray] (1/5,4/5) -- (2/5,1)-- (1,2/5)--(4/5,1/5); 
    \draw[ultra thick] (0,3/5)--(1/5,4/5)--(4/5,1/5)--(1,2/5);
\end{tikzpicture}
\qquad
\begin{tikzpicture}[xscale=1.5,yscale=1.5]
  \draw [fill=blue, fill opacity=.25] (0,2/5)--(1/5,3/5) -- (2/5,2/5)--(3/5,3/5)--(4/5,2/5)--(2/5,0)--(0,2/5);
  \draw [ultra thick] (0,2/5)--(1/5,3/5) -- (2/5,2/5)--(3/5,3/5)--(4/5,2/5)--(1,3/5);
  \draw[gray] (1/5,3/5)--(3/5,1)--(1,3/5);
\end{tikzpicture}
\qquad
\begin{tikzpicture}[xscale=-1.5,yscale=1.5]
  \draw[gray] (0,4/5)--(1/5,1)--(1,1/5)--(4/5,0)--(0,4/5);
  \draw[fill=blue, fill opacity=.25] (1,1/5)--(4/5,2/5)--(3/5,1/5)--(4/5,0)--(1,1/5);
  \draw [ultra thick] (0,4/5)--(3/5,1/5)--(4/5,2/5)--(1,1/5);
\end{tikzpicture}
$$

Observe that this is just an unlabelled version of the pictures we saw in Example \ref{radical-example}(3). This is not a coincidence: it will be explained in Section~\ref{recover}. \end{enumerate}\end{example}

\section{Permuton ideals recover permutation ideals} \label{recover}

The goal of this section is to prove the following theorem:

\begin{theorem}\label{finite-is-infinite} For $w\in \S_n$, and $i\in\{1,\dots,n-1\}$, the summand of the permuton ideal $I_{\gamma_w}^{i/n}$ coincides with the picture of the radical filtration of $I_w^i$ defined in Section \ref{permutation-ideals}. \end{theorem}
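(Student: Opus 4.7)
My plan is to compare two piecewise linear functions on $[0,1]$ with slopes in $\{\pm 1\}$ and breakpoints at multiples of $1/n$: the function $f_{\gamma_w, i/n}$ coming from the permuton construction, and the function $f_i$ defining the boundary of the picture of $I_w^i$ from Section \ref{permutation-ideals}. Both satisfy $f(0) = i/n$ and $f(1) = (n-i)/n$, so it will suffice to show that their slopes on each interval $[k/n, (k+1)/n]$ agree.

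On the permuton side, a direct computation from the definition of $\gamma_w$ (uniform mass $1/n$ on each cell $(j, w(j))$) gives
\[
\gamma_w([0, k/n] \times [0, i/n]) = \frac{1}{n}\,\bigl|\{j \leq k : w(j) \leq i\}\bigr|,
\]
and on $(k/n, (k+1)/n)$ the measure $\gamma_w([0, x]\times[0, i/n])$ grows at rate $1$ exactly when $w(k+1) \leq i$. Substituting into $f_{\mu,y}(x) = -2\mu([0,x]\times[0,y]) + y + x$, the slope of $f_{\gamma_w, i/n}$ on $[k/n, (k+1)/n]$ is $-1$ when $w(k+1) \leq i$ and $+1$ when $w(k+1) > i$.

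I then prove the same slope rule for $f_i$ by induction on $\ell(w)$. The base case $w = e$ gives $I_e^i = P_i$ with boundary curve $|x - i/n|$, and the rule holds trivially. For the inductive step, write $w' = ws_j$ with $\ell(w') = \ell(w) + 1$, so $w(j) < w(j+1)$. By the inductive hypothesis, $f_i$ has a local minimum at $x = j/n$ (slope $-1$ on the left, $+1$ on the right) precisely when $w(j) \leq i < w(j+1)$. Applying Lemma \ref{discrete-tau} to the submodule $I_w^i \subset P_i$, this local minimum corresponds to a single copy of $S_j$ in the top of $I_w^i$ (with no $S_j$ in the top otherwise). By Lemma \ref{algo}, $I_{ws_j} = I_w I_{s_j}$ is obtained from $I_w$ by deleting these top copies of $S_j$; in the picture of $I_w^i$ this has the purely local effect of raising $f_i(j/n)$ by $2/n$ (when $w(j) \leq i < w(j+1)$) and leaving $f_i$ unchanged at all other grid points. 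The two slopes adjacent to $x = j/n$ therefore flip from $(-1, +1)$ to $(+1, -1)$; since $(ws_j)(j) = w(j+1) > i$ and $(ws_j)(j+1) = w(j) \leq i$, these flipped slopes exactly match the required rule for $ws_j$. All other slopes are unchanged because $w$ and $ws_j$ agree off $\{j, j+1\}$.

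The most delicate step is the geometric bookkeeping in the induction: verifying that removing a single top-of-module $S_j$ from $I_w^i$ via Lemma \ref{algo} changes the picture by exactly raising $f_i(j/n)$ by $2/n$ and nothing else. This reduces to a local picture computation at the dip: the removed composition factor occupies a single diamond box of vertical extent $2/n$ sitting at the bottom of the dip at $x = j/n$, and its deletion changes only the curve's value at that one breakpoint while fixing the heights at neighboring grid points.
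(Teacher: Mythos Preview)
Your proof is correct and takes a genuinely different route from the paper's. The paper first reduces to the minimal-length coset representative $u = w^{\langle i \rangle}$ with respect to the parabolic subgroup $\S_{\langle i \rangle}$ generated by $\{s_k : k \neq i\}$, observing (Lemma~\ref{use-rep}) that $(I_w)^i = (I_u)^i$. It then writes down a canonical reduced expression for $u$ (Lemma~\ref{reduced}) of the form $(s_i s_{i+1}\cdots)(s_{i-1}\cdots)\cdots(s_1\cdots)$ and reads off the slope pattern by stripping simples along successive slope-$(-1)$ diagonals of $P_i$. Your argument instead inducts directly on $\ell(w)$, using Lemma~\ref{algo} one simple reflection at a time together with the observation (contained in the proof of Lemma~\ref{discrete-tau}) that local minima of the boundary curve correspond precisely to simples in the top of $I_w^i$. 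Your approach avoids the coset-representative machinery entirely and is more elementary; the paper's approach gives a cleaner global description of exactly which composition factors are removed from $P_i$, at the cost of developing Lemmas~\ref{use-rep} and~\ref{reduced}. Both arguments ultimately rest on Lemma~\ref{algo} and finish by matching the slope patterns of two functions in $\mathcal B_{i/n}$.
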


Before we prove the theorem, we will require some notation and a lemma.

Write $\S_{\langle i\rangle}$ for the subgroup of $\S_n$ generated by the adjacent transpositions other than $s_i$. It is isomorphic to $\S_i\times \S_{n-i}$. For $w\in \S_n$, if we consider the coset $\S_{\langle i\rangle}w$, it has a unique element of minimal length \cite[Corollary 2.4.5]{BB}. Call it $w^{\langle i\rangle}$. It follows that $w$ can be factored as $w=w_{\langle i \rangle}w^{\langle i\rangle}$, with $w_{\langle i \rangle}\in \S_{\langle i \rangle}$ and $\ell(w)=\ell(w_{\langle i\rangle}) + \ell(w^{\langle i \rangle})$ \cite[Proposition 2.4.4]{BB}. (We note that if we were more closely following the notation of \cite{BB} we would write ${}^{\langle i\rangle}w$.)

\begin{example}\label{coset} Consider $w=25341$. The factorization as $w_{\li}w^{\li}$ is obtained by starting from $w=(12)(23)(45)(34)(23)(45)$ and attempting to move adjacent transpositions in $\S_{\li}$ to the left using commutations and braid moves.

  If $i=2$, we obtain the factorization:
  $$ w= (12)(45)(34)\cdot (23)(34)(45).$$
  If $i=3$, we obtain the factorization:
  $$w= (12)(23)(45)\cdot (34)(23)(45).$$

\end{example}

The minimal-length coset representatives of $w\in \S_n$ in fact admit a simple description:

\begin{lemma}[{\cite[Exercise 2.4]{BB}}]\label{reps} The minimal-length coset representative $w^{\langle i\rangle}$ of $w$ is given by rearranging the elements of $1\dots i$ so they are in increasing order (while occupying the same positions) and similarly for $i+1,\dots, n$. \end{lemma}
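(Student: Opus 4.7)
The plan is to leverage the fact that left multiplication by $s_j$ acts on one-line notation by swapping the values $j$ and $j+1$ while fixing positions. First I would establish the following small lemma: $\S_{\li}$ acts on the left by independently permuting the values in $\{1,\dots,i\}$ and the values in $\{i+1,\dots,n\}$, leaving the set of positions occupied by each block unchanged. Consequently the left coset $\S_{\li}w$ consists of exactly those $u\in\S_n$ for which $u^{-1}(\{1,\dots,i\}) = w^{-1}(\{1,\dots,i\})$, with (equivalently) the same equation for $\{i+1,\dots,n\}$.

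Next I would analyze $\ell(u) = \#\{(p,q)\colon p<q,\ u(p)>u(q)\}$ by splitting inversions into three classes: both values in $\{1,\dots,i\}$ (``low-low''), both in $\{i+1,\dots,n\}$ (``high-high''), and one of each (``mixed''). The crucial observation is that the mixed inversion count depends only on the coset: it counts pairs of positions $(p,q)$ with $p<q$, $p\in u^{-1}(\{i+1,\dots,n\})$ and $q\in u^{-1}(\{1,\dots,i\})$, since for such a pair the inversion condition $u(p)>u(q)$ is automatic. Both position sets are coset invariants by the first step, so this count is too.

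Minimizing $\ell(u)$ over the coset therefore reduces to independently minimizing the low-low and high-high inversion counts, each of which can be driven to zero in exactly one way: arrange $1,\dots,i$ in increasing order at the positions $w^{-1}(\{1,\dots,i\})$, and $i+1,\dots,n$ in increasing order at the complementary positions. This simultaneously shows existence, uniqueness, and the explicit description of $w^{\li}$ claimed in the statement; the additivity $\ell(w)=\ell(w_{\li})+\ell(w^{\li})$ drops out of the same inversion decomposition. I expect no serious obstacle here; the only care required is to keep the left-versus-right multiplication convention consistent and to verify carefully that the mixed inversion count really is a coset invariant rather than depending on the specific representative.
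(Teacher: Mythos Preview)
Your proposal is correct. The paper does not actually prove this lemma; it simply records it with the citation \cite[Exercise 2.4]{BB} and moves on, so there is no argument to compare against. Your inversion-decomposition approach---observing that left multiplication by $\S_{\li}$ permutes the low and high value blocks independently while fixing their position sets, so that the mixed-inversion count is a coset invariant and the remaining two counts can be independently minimized to zero---is the standard direct proof and fills in exactly what the reference leaves as an exercise.
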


\begin{example}\label{coset-cont} Continuing Example \ref{coset}, it is easy to confirm that the factorizations of the minimal coset representatives obtained there agree with the statement of the lemma.
\begin{align*} w^{\langle 2\rangle}&=(23)(34)(45)=13452\\
  w^{\langle 3\rangle}&=(34)(23)(45)=14253.\end{align*}
\end{example}

We now demonstrate the relevance of minimal-length coset representatives to permutation ideals.
  
\begin{lemma}\label{use-rep} $(I_w)^i = (I_{w^{\langle i \rangle}})^i$. \end{lemma}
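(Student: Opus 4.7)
The plan is to use the length-additive factorization $w = w_{\langle i\rangle} w\li$ together with the algorithmic description of $I_w$ coming from Lemma~\ref{algo}. First, since $\ell(w) = \ell(w_{\langle i\rangle}) + \ell(w\li)$, concatenating a reduced expression for $w_{\langle i\rangle}$ with one for $w\li$ yields a reduced expression for $w$, so Theorem~\ref{mizuno-thm} gives
\[
I_w \;=\; I_{w_{\langle i\rangle}} \cdot I_{w\li}.
\]
Moreover, at every intermediate step of the algorithm that builds $I_w$ from $\Lambda$ by right-multiplying by one $I_{s_j}$ at a time, the length of the corresponding product of simple transpositions increases by one, so Lemma~\ref{algo} applies at each step.

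The key structural observation is that right multiplication preserves the summand decomposition: for any ideal $J \subseteq \Lambda = \bigoplus_k P_k$, writing $J^k = J \cap P_k$, we have $J \cdot I_{s_j} = \bigoplus_k (J^k \cdot I_{s_j})$, because each $P_k$ is a right $\Lambda$-submodule of $\Lambda$. Combined with Lemma~\ref{algo}, this means that the operation ``remove the copies of $S_j$ in the top'' acts on each summand $J^k$ independently. Using this, I would prove by induction on the length of a reduced expression for $w_{\langle i\rangle}$ that
\[
(I_{w_{\langle i\rangle}})^i \;=\; P_i.
\]
Since no $s_i$ appears in a reduced expression for $w_{\langle i\rangle}$, every step removes some $S_j$ with $j \neq i$ from the top of each summand; but the top of $P_i$ is the single simple $S_i$, which is never targeted, so the $i$-th summand is unchanged at every stage of the process building $I_{w_{\langle i\rangle}}$.

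Combining these ingredients, $(I_w)^i = (I_{w_{\langle i\rangle}} \cdot I_{w\li})^i$ is obtained by iteratively right-multiplying $I_{w_{\langle i\rangle}}$ by the factors of a reduced expression for $w\li$; by the summand-wise behavior above, the effect on the $i$-th summand is obtained by applying exactly the same sequence of operations to $(I_{w_{\langle i\rangle}})^i = P_i$, which by definition produces $(I_{w\li})^i$. This yields $(I_w)^i = (I_{w\li})^i$. The argument is really just careful bookkeeping, and I do not expect a serious obstacle: the only points that need attention are verifying the summand-wise behavior of right multiplication by $I_{s_j}$, and checking that length-additivity of the factorization $w_{\langle i\rangle} w\li$ guarantees Lemma~\ref{algo} is applicable at every intermediate step.
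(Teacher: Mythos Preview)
Your proposal is correct and follows essentially the same approach as the paper: use the length-additive factorization $w=w_{\langle i\rangle}w^{\langle i\rangle}$ to obtain a reduced expression for $w$ beginning with one for $w_{\langle i\rangle}$, then observe via Lemma~\ref{algo} that the initial $w_{\langle i\rangle}$-portion leaves the $i$-th summand $P_i$ untouched since $\operatorname{top}(P_i)=S_i$ and no $s_i$ occurs in $w_{\langle i\rangle}$. The paper's argument is terser, omitting your explicit verification of the summand-wise behaviour of right multiplication, but the underlying reasoning is the same.
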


\begin{proof}
  Because $\ell(w)=\ell(w_{\langle i \rangle i})+\ell(w^{\langle i\rangle})$, there is a reduced expression for $w$ which consists of a reduced expression for $w_{\langle i\rangle}$ followed by a reduced expression for $w^{\langle i\rangle}$. Now apply 
  Lemma~\ref{algo} to determine $(I_w)^i$ using this reduced expression for $w$. The only simple in the top of $P_i$ is $S_i$, so the reduced expression for $w_{\langle i\rangle}$ has no effect on $P_i$. Thus, the effect on $P_i$ of $w$ and $w^{\langle i\rangle}$ are the same. The result follows. \end{proof}

\begin{example} Continuing Example \ref{coset-cont}, it is easy to check that
  $(I_{(23)(34)(45)})^2$ agrees with $(I_w)^2$ and $(I_{(34)(23)(45)})^3$ agrees with $(I_w)^3$. \end{example}

The advantage of being able to replace $w$ by $w^{\li}$ is that there is a canonical choice of reduced expression for $w^{\li}$.

\begin{lemma}\label{reduced} If $u$ is a minimal length coset representative with respect to $\S_\li$, then $u$ has a reduced expression
  $$u=(s_is_{i+1}\dots s_{u^{-1}(i)-1})(s_{i-1}\dots s_{u^{-1}(i-1)-1})\dots(s_1\dots s_{u^{-1}(1)-1}).$$
\end{lemma}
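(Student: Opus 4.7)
The plan is to prove the equality by separately verifying that the right-hand side, as a product of simple reflections, (a) equals $u$ as a permutation of $\{1,\dots,n\}$ and (b) has total length equal to $\ell(u)$, so that it is in fact reduced.

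Set $m_j := u^{-1}(j)$, so by Lemma~\ref{reps} we have $m_1 < m_2 < \cdots < m_i$, and set $c_{k,m} := s_k s_{k+1}\cdots s_{m-1}$ (interpreted as the identity when $m=k$). A short induction on $m-k$, using $c_{k,m} = s_k\, c_{k+1,m}$, shows that $c_{k,m}$ is the cycle that sends $k \mapsto k+1 \mapsto \cdots \mapsto m \mapsto k$ and fixes everything else. Writing $v_k := c_{k,m_k}\, c_{k-1,m_{k-1}} \cdots c_{1,m_1}$, I would prove by induction on $k$ the following invariant for the one-line notation of $v_k$: for every $j \in \{1,\dots,k\}$ the value $j$ sits at position $m_j$; the values $k+1, k+2, \dots, m_k$ fill the positions in $\{1,\dots,m_k\} \setminus \{m_1,\dots,m_k\}$ in increasing order; and positions $m_k+1, \dots, n$ carry the identity values. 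The inductive step applies $c_{k+1,m_{k+1}}$ on the left, which (using $m_k < m_{k+1}$) increments by one each value in $\{k+1,\dots,m_{k+1}-1\}$ and sends the value $m_{k+1}$ (currently at position $m_{k+1}$) to $k+1$, leaving the values $1,\dots,k$ fixed; this immediately yields the invariant at stage $k+1$. At $k=i$, the invariant matches the one-line description of $u$ given by Lemma~\ref{reps}, so $v_i = u$.

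For the length count, the expression contains $\sum_{k=1}^i (m_k - k)$ simple reflections. On the other hand, because $u$ is the minimal-length coset representative, the values $\{1,\dots,i\}$ appear in increasing order in their respective positions, and similarly for $\{i+1,\dots,n\}$; hence every inversion $(p,q)$ of $u$ with $p < q$ and $u(p) > u(q)$ must satisfy $u(q) \in \{1,\dots,i\}$ and $u(p) \in \{i+1,\dots,n\}$. Fixing $u(q) = j$, so $q = m_j$, the contribution to $\ell(u)$ is the number of positions $p < m_j$ with $p \notin \{m_1,\dots,m_{j-1}\}$; since $m_1 < \cdots < m_{j-1} < m_j$, this is $(m_j - 1) - (j-1) = m_j - j$. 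Summing over $j$ gives $\ell(u) = \sum_{j=1}^i (m_j - j)$, matching the length of the expression and showing it is reduced.

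The main (mild) obstacle is the bookkeeping in the inductive step: one must check carefully that $c_{k+1,m_{k+1}}$, applied on the left to the pattern at stage $k$, preserves the increasing order among the unfixed values and inserts the new value $k+1$ exactly at position $m_{k+1}$, all while fixing the previously placed values $1,\dots,k$. This hinges on the strict inequality $m_k < m_{k+1}$ supplied by Lemma~\ref{reps}.
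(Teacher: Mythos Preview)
Your proof is correct and follows essentially the same strategy as the paper: first verify that the product of simple reflections on the right-hand side equals $u$ as a permutation, then count inversions to conclude the expression is reduced. The only cosmetic difference is that you track the partial products $v_k$ inductively, whereas the paper evaluates the full product $w$ directly on each input $u^{-1}(j)$ (splitting into the cases $j\le i$ and $j>i$); the inversion count is identical in both.
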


\begin{proof}
Note that, by Lemma \ref{reps}, we have $u^{-1}(1)<\dots<u^{-1}(i)$. 



Now, if we let $w$ be the permutation given by the reduced expression in the sstatement of the lemma, it is easy to check that $w(u^{-1}(j))=j$ for $1\leq j\leq i$. Further, for $i+1\leq j\leq n$, let $t$ be the smallest element of $\{1,\dots,i\}$ such that $u^{-1}(j)<u^{-1}(t)$. If there is no such element, set $t=i+1$. Now, it is easy to see that $w(u^{-1}(j))=u^{-1}(j)-(i+1-t)$. Since there are $(i+1)-t$ elements from $\{1,\dots,i\}$ which appear to the right of $u^{-1}(j)$ in $u$, we have that $u(u^{-1}(j))=u^{-1}(j)-(i+1-t)=w(u^{-1}(j))$. Thus $u$ and $w$ agree on all inputs, so $w=u$.

Further, since the number of pairs $1\leq j<k \leq n$ with $u(j)>u(k)$ equals the length of the expression in the statement of the lemma, that expression is therefore reduced. The lemma is proved. \end{proof}
  
We now turn to the proof of Theorem~\ref{finite-is-infinite}.

\begin{proof}[Proof of Theorem~\ref{finite-is-infinite}]
Clearly, it is sufficient to show that  the piecewise-linear curve that defines the top of the picture of the radical filtration of $(I_w)^i$ agrees with the piecewise-linear curve that defines $D_{\mu_w}^i$. 

Let $u=w^{\li}$. Using the reduced expression for $u$ from Lemma \ref{reduced},
and applying Lemma \ref{algo},
$(I_u)^i$ can be obtained from $P_i$ by stripping off simples in the order $((S_i, S_{i+1}, \dots, S_{u^{-1}(i)-1}),(S_{i-1},\dots,S_{u^{-1}(i-1)-1}),\dots,(S_1,\dots,S_{u^{-1}(1)-1}))$. Note that the first parenthesized subsequence removes simples from the top diagonal of slope $-1$, the second subsequence removes simples from the next diagonal down, and so on. By Lemma \ref{use-rep}, this is also $(I_w)^i$.

Let us now consider the piecewise-linear curve describing the top of $(I_w)^i$.  For each interval $(j/n,(j+1)/n)$, it has slope 1 or $-1$. We see that it has slope 1 on interval $(j/n,(j+1)/n)$ if $u^{-1}(j)\in\{1,\dots,i\}$, and slope $-1$ if $u^{-1}(j)\in\{i+1,\dots,n\}$. 

The function $f_{\mu,i/n}(x)$ also has the same slopes. Both these functions are in $\mathcal B_i$, but there is at most one function with these slopes in $\mathcal B_i$.  Thus the two functions coincide.\end{proof}

\section{Inclusion order on permuton ideals}\label{sec:order on permuton ideals}

In this section we consider the inclusion orders on permutation ideals and permuton ideals. 

Let $u,v\in \S_n$. 
\cite[Lemma 6.5]{ORT} says that $I_u\leq I_v$ if and only if $u\geq v$ in Bruhat order on permutations (one of whose equivalent definitions we shall shortly recall).

On the other hand, the following proposition is easy to see:

\begin{proposition}\label{prop:nice Bruhat condition} $I_\mu \subseteq I_\nu$ if and only if $\mu([0,a]\times[0,b]) \leq \nu([0,a]\times[0,b])$ for all $0< a,b<1$. \end{proposition}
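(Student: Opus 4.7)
The plan is to reduce the ideal containment to a pointwise inequality of the boundary functions and then recognize that inequality as the claimed inequality on measures. Since both $I_\mu$ and $I_\nu$ decompose as direct sums indexed by $y\in(0,1)$, with summand at $y$ a submodule of $P_y$, the containment $I_\mu\subseteq I_\nu$ is equivalent to $D_{f_{\mu,y}}\subseteq D_{f_{\nu,y}}$ for every $y\in(0,1)$.

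Next I would invoke Proposition~\ref{prop:boundaries and decorous modules} (together with Lemma~\ref{lem:decorous from permuton}) to convert containment of decorous submodules of a fixed $P_y$ into an inequality of their defining functions. Recall that the function $f\in\mathcal{B}_y$ attached to $D_f$ records, at each $x$, the infimum of the lengths of path-like morphisms of $D_f(x)$, and $D_f(x)$ contains precisely the path-like morphisms of length at least $f(x)$. Thus lowering the boundary function \emph{enlarges} the submodule, i.e.\ $D_f\subseteq D_g$ if and only if $g(x)\leq f(x)$ for all $x\in[0,1]$. Applied with $f=f_{\mu,y}$ and $g=f_{\nu,y}$, this yields $D_{f_{\mu,y}}\subseteq D_{f_{\nu,y}}$ iff $f_{\nu,y}(x)\leq f_{\mu,y}(x)$ for all $x\in[0,1]$.

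Finally, I would substitute the explicit formula $f_{\mu,y}(x)=-2\mu([0,x]\times[0,y])+y+x$ (and likewise for $\nu$). The common $y+x$ cancels, the factor $-2$ flips the inequality, and the condition becomes
\[
\mu([0,x]\times[0,y])\leq \nu([0,x]\times[0,y])\qquad\text{for all }x,y\in[0,1].
\]
Setting $a=x$ and $b=y$ gives the statement of the proposition; the boundary cases $a\in\{0,1\}$ or $b\in\{0,1\}$ are immediate from the permuton normalization, so only $0<a,b<1$ carries content, and the interior inequalities extend to the closed square by continuity of $f_{\mu,y}$ and $f_{\nu,y}$.

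There is no real obstacle here once the dictionary between decorous submodules of $P_y$ and functions in $\mathcal{B}_y$ is in hand; the entire argument is an unwinding of definitions. The only point worth checking carefully is the direction of the inequality, i.e.\ that a \emph{smaller} boundary function corresponds to a \emph{larger} decorous submodule, which is settled by the definition of $\upbnd$ recalled above.
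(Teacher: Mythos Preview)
Your proposal is correct and is precisely the kind of definition-unwinding the paper has in mind; the paper itself does not spell out a proof, merely declaring the proposition ``easy to see,'' and your argument supplies exactly the routine verification (reduce to a summand-by-summand comparison inside each $P_y$, translate decorous containment into the pointwise inequality $f_{\nu,y}\leq f_{\mu,y}$, and cancel $y+x$ in the explicit formula). The only delicate point is the direction of the inequality, and you handle it correctly.
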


By analogy, if the equivalent conditions of the proposition hold, we say that
$\mu \geq \nu$ in ``permuton Bruhat order.'' 

The permuton Bruhat order in fact recovers Bruhat order on permutations. The main result of this section is the following theorem. 

\begin{theorem}\label{bruhat-restricts} For $u,v\in \S_n$, we have that $\gamma_u \leq \gamma_v$ with respect to the permuton Bruhat order if and only if $u\leq v$ with respect to Bruhat order. \end{theorem}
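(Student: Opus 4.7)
My plan is to reduce the equivalence to a statement about the classical rank matrix
\[
r_{ij}(w) := \bigl|\{k\leq i : w(k)\leq j\}\bigr|, \qquad 1\leq i,j\leq n-1,
\]
and then invoke Ehresmann's tableau criterion (see \cite{BB}), which says that $u\leq v$ in the classical Bruhat order on $\S_n$ if and only if $r_{ij}(u)\geq r_{ij}(v)$ for all such $i,j$. The translation between the two orders will go via the distribution function $F_w(a,b):=\gamma_w([0,a]\times[0,b])$.

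First I would evaluate $F_w$ at the grid. Since $\gamma_w$ is a sum of $n$ uniform measures of total mass $1/n$, supported on the grid squares $[(k-1)/n,k/n]\times[(w(k)-1)/n,w(k)/n]$, a direct count of squares entirely contained in $[0,i/n]\times[0,j/n]$ yields $F_w(i/n,j/n)=r_{ij}(w)/n$. Combined with Proposition~\ref{prop:nice Bruhat condition}, the condition $\gamma_u\leq\gamma_v$ in permuton Bruhat order, \emph{restricted} to grid points, already reads $r_{ij}(u)\geq r_{ij}(v)$ for all $i,j$. The remaining task is to show that the grid-point inequalities suffice to guarantee $F_u(a,b)\geq F_v(a,b)$ for every $(a,b)\in(0,1)^2$.

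To close this gap I would fix a cell $[(i-1)/n,i/n]\times[(j-1)/n,j/n]$ and work in local coordinates $s=a-(i-1)/n$, $t=b-(j-1)/n$. Decomposing $[0,a]\times[0,b]$ into a completed subrectangle, a partial column strip, a partial row strip, and a partial corner piece, and computing each contribution separately, one finds the bilinear expansion
\[
F_w(a,b) = A + Bs + Ct + nDst,
\]
where $A=r_{i-1,j-1}(w)/n$, the coefficient $B\in\{0,1\}$ equals $1$ precisely when $w(i)<j$, similarly $C\in\{0,1\}$ equals $1$ precisely when $w^{-1}(j)<i$, and $D\in\{0,1\}$ equals $1$ precisely when $w(i)=j$. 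Thus $F_u-F_v$ is bilinear on each cell. A bilinear function on a rectangle attains its minimum at a corner (its restriction to any horizontal or vertical slice is affine, so the minimum over that slice is attained at an endpoint, and iterating gives a corner of the rectangle). Consequently, nonnegativity of $F_u-F_v$ at the four corners of a cell implies nonnegativity throughout the cell; the corners of the cells are exactly the grid points.

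Chaining everything: $\gamma_u\leq\gamma_v$ in permuton Bruhat $\iff$ $F_u\geq F_v$ on $(0,1)^2$ $\iff$ $F_u\geq F_v$ at grid points $\iff$ $r_{ij}(u)\geq r_{ij}(v)$ for all $i,j$ $\iff$ $u\leq v$ in classical Bruhat order. The main obstacle is the bookkeeping in the bilinear expansion: one must enumerate carefully which squares of $\gamma_w$ contribute to each of the four pieces of $[0,a]\times[0,b]$ and verify that their contributions combine into the claimed bilinear polynomial whose corner values match $r_{i'j'}(w)/n$ at the four adjacent grid vertices. Once that verification is complete, the corner-minimum property of bilinear functions and Ehresmann's criterion finish the argument.
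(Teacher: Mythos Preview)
Your proposal is correct and follows essentially the same route as the paper's proof. Both arguments reduce the permuton inequality to the grid values via the bilinearity of $F_w$ on each $\tfrac1n\times\tfrac1n$ cell and then invoke the classical Bruhat criterion; the only cosmetic differences are that the paper works with the complementary count $u[i,j]=|\{k\leq i:u(k)>j\}|$ (equivalently with $[0,a]\times[b,1]$ instead of $[0,a]\times[0,b]$) and writes the bilinear interpolation explicitly as a weighted sum of the four corner values, whereas you phrase the same step via the corner-minimum property of bilinear functions.
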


One approach to proving Theorem \ref{bruhat-restricts} would be via Theorem \ref{finite-is-infinite}. For $u,v$ two permutations, if $\gamma_u \geq \gamma_v$, then $D_{\gamma_u} \subseteq D_{\gamma_v}$, and from Theorem \ref{finite-is-infinite}, it follows that $I_u\subseteq I_v$, so $u\geq v$ by \cite[Lemma 6.5]{ORT}, immediately establishing one direction of the theorem. Nonetheless, we give a direct proof, because it is straightforward and seems potentially of independent interest. 

There are several equivalent definitions of Bruhat order. Some of them refer to reduced expressions and/or covering relations, concepts which are not readily applicable to permutons.

To give the definition which we shall use, we need one piece of notation.
For $u\in \S_n$, define $u[i,j]=\{a\in \{1,\dots,i\} \mid u(a)>j\}$.
Now, \cite[Theorem 2.1.5]{BB} says that $u\leq v$ in Bruhat order if and only if $u[i,j]\leq v[i,j]$ for all $1\leq i,j\leq n$.

We can now prove Theorem \ref{bruhat-restricts}.

\begin{proof}[Proof of Theorem \ref{bruhat-restricts}]
  If $\gamma_u\leq \gamma_v$, then clearly $u\leq v$, since $u[i,j]=\gamma_u([0,i/n]\times [j/n,1])$.
  
  For the converse direction, suppose $u\leq v$, and let $a,b\in(0,1)$. We need to check that
  $\gamma_u([0,a]\times[b,1])\leq \gamma_v([0,a]\times[b,1])$. Let $a=i/n+\delta$, $b=j/n-\epsilon$, with $\delta,\epsilon<1/n$. Then
\begin{align*}\gamma_u([0,a]\times[b,1])= &(1-\delta)(1-\epsilon) u[i,j] +
  \delta(1-\epsilon) u[i+1,j] \\ &\qquad + (1-\delta)\epsilon u[i,j-1] + \delta\epsilon u[i+1,j-1].\end{align*}
  Since the same formula holds with $u$ replaced by $v$, the desired inequality follows. 
\end{proof}

\section{Permuton ideals satisfy an analogue of $\tau$-rigidity}\label{sec:permuaton rigidity}

Our goal in this section is to prove an analogue of Mizuno's $\tau$-rigidity result (Theorem \ref{mizuno-rigid}) in the continuous setting. In the setting of modules over the continuous preprojective algebra, the Auslander--Reiten translation is not well-defined, so the definition of $\tau$-rigidity cannot be directly applied. By analogy with the discrete case, in which Lemma \ref{discrete-tau} says that if $M$ is a submodule of $P_i$ then $\tau M\simeq P_i/M$, in the continuous setting we will treat as a definition of $\tau$ for a module contained in an indecomposable projective $P_i$, that $\tau M\simeq P_i/M$.


Let $\mu$ be a permuton. Write $D_\mu^x$ for the summand of the submodule of $P_x$ in the ideal corresponding to $\mu$. Write $U_\mu^x=P_x/D_\mu^x$. 
We will show the following analogue of Mizuno's $\tau$-rigidity result:

\begin{theorem}\label{cont-rigid} For $a,b\in(0,1)$, we have $\Hom(D_\mu^a,U_\mu^b)=0$.
\end{theorem}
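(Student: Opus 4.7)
The plan is to argue by contradiction. Suppose $\phi\colon D_\mu^a \to U_\mu^b$ is nonzero. Combining Lemma~\ref{lem:submodules of projectives are downward closed} with Lemma~\ref{lem:decorous from permuton}, at each $y$ in the support of $D_\mu^a$ there is a minimum-length path-like generator $g_y \in D_\mu^a(y)$ of length $f_{\mu,a}(y)$, and every other element of $D_\mu^a(y)$ is a $\Bbbk$-linear combination of push-ups $\psi_\e g_y$. Consequently $\phi$ is determined by the family $F(y) := \phi(g_y) \in U_\mu^b(y)$ subject to the naturality relations $\phi(k g_y) = k F(y)$ for every path-like $k\colon y\to y'$. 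Pick $y_0$ with $F(y_0) \neq 0$ and represent
\[ F(y_0) \equiv \sum_i \lambda_i h_i \pmod{D_\mu^b(y_0)}, \]
where each $h_i\colon b\to y_0$ is path-like of length $m_i \in [|b-y_0|,\, f_{\mu,b}(y_0))$ and at least one $\lambda_i$ is nonzero.

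The essential input is the permuton identity, which for $a<b$ reads
\[ f_{\mu,b}(y) - f_{\mu,a}(y) \;=\; (b-a) - 2\mu\bigl([0,y]\times(a,b]\bigr), \]
so $f_{\mu,b} - f_{\mu,a}$ is weakly decreasing in $y$ (since $\mu([0,y]\times(a,b])$ is weakly increasing). Combined with the $1$-Lipschitz property of both $f_{\mu,a}$ and $f_{\mu,b}$, this lets me compare the length $\len(k) + m_i$ of each composite $kh_i$ to the threshold $f_{\mu,b}(y')$: the term $kh_i$ vanishes in $U_\mu^b(y')$ exactly when $\len(k)+m_i \ge f_{\mu,b}(y')$. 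I would choose a direction $y_0\mapsto y'$ in which $\mu([0,y']\times(a,b])$ is maximal while $f_{\mu,a}$ realises its Lipschitz maximum slope (so that the shortest path-like $k$ of length $|y_0-y'|$ sends $g_{y_0}$ to $g_{y'}$ itself): along such a direction every $kh_i$ lands above $f_{\mu,b}(y')$, forcing $F(y')=0$; reversing the chain of naturality relations then propagates the vanishing back to $F(y_0)$, yielding the desired contradiction.

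The main obstacle is that $f_{\mu,a}$ and $f_{\mu,b}$ are only $1$-Lipschitz rather than piecewise linear, so the directions with $|f_{\mu,a}(y_0)-f_{\mu,a}(y')| = |y_0-y'|$ that cleanly identify $kg_{y_0}$ with a new generator $g_{y'}$ may be sparse or require a delicate accumulation argument. Two strategies present themselves: either invoke the range-of-codependence machinery from Section~\ref{sec:sheets} (Lemma~\ref{lem:codep determines a morphism} and Definition~\ref{def:range of codependence}) to package mutually constrained generators together and reduce the analysis to a single multi-elementary morphism; or first verify the claim when $\mu = \gamma_w$ comes from a permutation by combining Theorem~\ref{finite-is-infinite} with Mizuno's $\tau$-rigidity (Theorem~\ref{mizuno-rigid}), and then extend to arbitrary permutons using density of $\gamma_w$'s in the space of permutons together with a continuity argument on $\Hom(D_\mu^a, U_\mu^b)$ in $\mu$.
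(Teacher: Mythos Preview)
You have correctly identified the key algebraic input: the difference $f_{\mu,b}-f_{\mu,a}$ is weakly monotone, and this is exactly what the paper uses. However, the mechanism you propose to exploit it has a genuine gap, which you yourself flag but do not resolve.

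Your plan is to move from $y_0$ to some $y'$ along a direction where $f_{\mu,a}$ attains its Lipschitz bound, so that the minimum-length generator $g_{y_0}$ is sent to $g_{y'}$. As you note, such directions need not exist for a merely $1$-Lipschitz function, and no accumulation or codependence argument from Section~\ref{sec:sheets} will manufacture them. The density-plus-continuity route is also not available: nothing in the paper establishes that $\dim\Hom(D_\mu^a,U_\mu^b)$ is semicontinuous in $\mu$, and there is no obvious reason it should be.

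The paper's argument (Proposition~\ref{general-hom-vanishing}) avoids this obstacle entirely by a zigzag that never asks $f_{\mu,a}$ to realise an extremal slope. Normalise so the top boundary $f$ of $D_f$ and the bottom boundary $g$ of $U_g$ live in the same picture, and say $f-g$ is weakly decreasing. Starting at $(x_0,f(x_0))$ with nonzero image, follow a line of slope $+1$ \emph{down and to the left} inside $U_g$ until it meets $y=g(x)$ at $(x_1,y_1)$; every point on this segment lies in the image of $\phi$, hence the corresponding points of $D_f$ are not in $\ker\phi$. From $(x_1,y_1)$ follow a line of slope $-1$ \emph{up and to the left} until it meets $y=f(x)$ at $(x_2,y_2)$; any point of $D_f$ strictly above this segment generates a point on the previous segment, so it is not in $\ker\phi$ either. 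Iterate. Because $f-g\ge\epsilon:=f(x_0)-g(x_0)$ for all $x\le x_0$, each step decreases the $x$-coordinate by at least $\epsilon/2$, so after finitely many steps you reach $x=0$. But near $x=0$ the supports of $D_f$ and $U_g$ are vertically separated by at least $\epsilon$, forcing those points into $\ker\phi$, a contradiction. The point is that the slope-$\pm1$ segments come from the \emph{module structure} on $U_g$ and $D_f$, not from any slope condition on the boundary curves themselves.
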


This will follow from the following more general proposition.

\begin{proposition}\label{general-hom-vanishing} Let $f,g\in\mathcal B$. Suppose further that $f-g$ is either weakly decreasing or weakly increasing. Then $\Hom(D_f,U_g)=0$. \end{proposition}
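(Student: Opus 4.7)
The plan is to argue by contradiction. Suppose $\phi: D_f \to U_g$ is non-zero. Without loss of generality $f - g$ is weakly decreasing (the weakly increasing case is symmetric via the canonical isomorphism $\preI \cong \preI^{\mathrm{op}}$ of Remark~\ref{rmk:preI and preIop}, which swaps submodules with quotients). The boundary values $(f-g)(0) = a-b$ and $(f-g)(1) = b-a$, together with monotonicity, force $a \geq b$ and $\sup_y(g(y) - f(y)) = (g-f)(1) = a - b$.

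My plan is to apply $\Hom(-, U_g)$ to the short exact sequence $0 \to D_f \to P_a \to U_f \to 0$ from Proposition~\ref{prop:decorous short exact sequence}. Since $P_a$ is projective ($\Hom(P_a, -) = (-)(a)$ is exact), Yoneda gives $\Hom(P_a, U_g) = U_g(a)$, and the long exact sequence truncates to
\begin{equation*}
  0 \to \Hom(U_f, U_g) \to U_g(a) \to \Hom(D_f, U_g) \to \Ext^1(U_f, U_g) \to 0.
\end{equation*}
So it suffices to show (i) the map $U_g(a) \to \Hom(D_f, U_g)$ is zero, and (ii) $\Ext^1(U_f, U_g) = 0$. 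Statement (i) is the clean length count where monotonicity enters: any $\eta \in U_g(a)$ is represented, after reducing modulo $D_g(a)$, by a sum $\sum_j \mu_j \bar\eta_j$ of path-like morphisms $\bar\eta_j: b \to a$ of length $r_j \geq |a - b| = a - b$. The corresponding $\bar\phi_\eta: P_a \to U_g$ sends $\sigma \in D_f(y)$ of length $\ell \geq f(y)$ to $\bigl[\sum_j \mu_j (\sigma \circ \bar\eta_j)\bigr]$, and each composite has length $\ell + r_j \geq f(y) + (a-b) \geq g(y)$ by the sup computation above. So each composite is in $D_g(y)$, whence $\bar\phi_\eta|_{D_f} = 0$.

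The main obstacle is (ii). My plan here is to build a length-one projective presentation of $U_f$: take $Q = \bigoplus_{y \in \gen(D_f)} P_y$ and the canonical surjection $Q \twoheadrightarrow D_f$ (with $\gen(D_f)$ identified with the local maxima of $f$ via the decorous structure of Proposition~\ref{prop:boundaries and decorous modules}), giving an exact sequence $Q \to P_a \to U_f \to 0$. Then $\Ext^1(U_f, U_g)$ is computed as the cokernel of $\Hom(P_a, U_g) \to \Hom(Q, U_g)$, and a length-counting argument run summand-by-summand on each $P_y \subseteq Q$ — completely parallel to (i), but with the role of $a$ played by $y \in \gen(D_f)$ and exploiting that $f(y)$ is maximal among nearby values — should show this restriction map is surjective. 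The hard part is handling the fact that $\gen(D_f)$ may be a continuum and $Q \twoheadrightarrow D_f$ need not be injective; I expect this to be controllable using the sheet-module machinery of Section~\ref{sec:sheets} together with the Lipschitz bound on $f$, so that the higher syzygies of $U_f$ also satisfy the length inequality forced by $\sup(g-f) = a-b$. With both (i) and (ii) in hand, $\Hom(D_f, U_g) = 0$, contradicting $\phi \neq 0$.
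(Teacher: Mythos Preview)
Your part (i) is correct: any morphism $D_f \to U_g$ that extends to $P_a \to U_g$ must vanish, by exactly the length count you give (monotonicity yields $g(y)-f(y)\leq a-b$ for all $y$, so every composite lands in $D_g$).

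The genuine gap is (ii). You need $\Ext^1(U_f,U_g)=0$, or at least that every morphism $D_f\to U_g$ extends to $P_a$, and you have not shown this. Your sketch---take $Q=\bigoplus_{y\in\gen(D_f)}P_y$ and argue summand by summand---does not go through as stated: when $f$ has slopes strictly between $-1$ and $1$ on an interval, every point of that interval lies in $\gen(D_f)$, so $Q$ is an uncountable direct sum whose kernel over $D_f$ is large and not controlled by anything in Section~\ref{sec:sheets}. You yourself flag this as ``the hard part'' and only ``expect'' it to be controllable; that is precisely where the argument stops being a proof. (Also note that your opening ``suppose $\phi\neq 0$'' is never used; the scheme is really a direct computation, not a contradiction.)

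The paper avoids homological algebra entirely. It fixes a point $(x_0,f(x_0))$ where a putative $\phi$ is non-zero, sets $\epsilon=f(x_0)-g(x_0)>0$, and runs a leftward zigzag: from $(x_{2i},f(x_{2i}))$ follow slope $+1$ down--left until meeting $y=g(x)$ at $x_{2i+1}$ (these image points must be non-zero in $U_g$), then slope $-1$ up--left until meeting $y=f(x)$ at $x_{2i+2}$ (these points of $D_f$ generate the previous ones, so cannot lie in $\ker\phi$). The Lipschitz bound together with $f(x)-g(x)\geq\epsilon$ for $x\leq x_0$ forces each step to move left by at least $\epsilon/2$, so $x=0$ is reached in finitely many steps; but near $x=0$ the supports of $D_f$ and $U_g$ are separated by $f(0)-g(0)\geq\epsilon$, forcing $\phi=0$ there---contradiction. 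This elementary propagation argument sidesteps exactly the extension problem you could not resolve.
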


\begin{proof}
It suffices to consider only the weakly decreasing case; the weakly increasing case follows by swapping the roles of left and right. So suppose that $f-g$ is weakly decreasing.

Suppose there is some non-zero morphism $\phi$ from $D_f$ to $U_g$. So there is some $a\in (0,1)$ such that $(x_0,f(x_0))\in D_f$ is sent by $\phi$ to a non-zero element in $U_g$, say a linear combination of a finite collection of points $(x_0,z)$ for various values of $z$. All the $(x,z)$ appearing in the linear combination with non-zero coefficients must have $z$ greater than $g(x_0)$. Let $y_0$ be the largest of them.
Assume that $y_0=f(x_0)$; this is without loss of generality, since $D_f$ is invariant under vertical translation of $f$.
Let  $\epsilon=f(x_0)-g(x_0)$. Follow a line of slope 1 from $(x_0,y_0)$ down/left until it first hits the curve $y=g(x)$ at $(x_1,y_1)$, or we reach $x=0$ (in which case we define $x_1=0, y_1=g(0)$. Let the segment from $(a,b)$ to $(x_1,y_1)$ be $\ell_1$.

  All the points on $\ell_1$ (before hitting the curve) must be in the image of $\phi$, since $(x_0,y_0)$ is, so these points in $D_f$ must not lie in the kernel of $\phi$. Now, assuming we have not already reached the line $x=0$, follow a line of slope $-1$ from $(x_1,y_1)$ up/left until hitting $y=f(x)$ at $(x_2,y_2)$ (or stopping hitting the line $x=0$). Let $\ell_2$ be the segment from $(x_1,y_1)$ to $(x_2,y_2)$. 
  Points in $D_f$ strictly above $\ell_2$ have a point on $\ell_1$ in the submodule they generate, so, since the latter points are not in the kernel of $\phi$, neither are the former. Thus, any point on $y=f(x)$ strictly between $(x_2,y_2)$ and $(x_0,y_0)$ cannot lie in the kernel of $\phi$. So long as we do not reach $x=0$, we 
  can repeat the same argument, finding points $(x_3,y_3)$ on $y=g(x)$ and
  $(x_4,y_4)$ on $y=f(x)$, so that all points strictly between $(x_4,y_4)$ and
  $(x_0,y_0)$ on $y=f(x)$ do not lie in the kernel, and so on.

  Since $g\in\mathcal B$, the distance $x_{2i}-x_{2i+1}\geq \frac1{2}(f(x_{2i})-g(x_{2i}))$ unless $x_{2i+1}=0$. Similarly, since $f\in\mathcal B$, $x_{2i+1}-x_{2i+2}\geq \frac1{2}(f(x_{2i+1})-g(x_{2i+1}))$ unless $x_{2i+2}=0$. Since $f-g$ is weakly decreasing,
  $f(x_j)-g(x_j)\geq \epsilon$ for all $j$. Thus, $x_j-x_{j+1}\geq \frac{\epsilon}{2}$. It follows that after a finite number of steps, we reach the line $x=0$. 

  Near $x=0$, though, $D_f$ is supported only at $y$-values close to $f(0)$, while $U_g$ is supported only at $y$-values close to $g(0)$. Since $f-g$ is decreasing, but $f(a)-g(a)=\epsilon$, we have that $f(0)-g(0)\geq \epsilon$. Thus the
  points on $y=f(x)$ near $x=0$ must be in the kernel of $\phi$. This is a contradiction. 
  \end{proof}

\begin{proof}[Proof of Theorem \ref{cont-rigid}] Let $f(x)=2 \mu((0,a)\times(0,x))-x$, $g(x)=2\mu((0,b)\times(0,x))$. We have that $f,g\in\mathcal B$. If $a\leq b$, then $f(x)-g(x)=2\mu((a,b)\times(0,x))$ which is weakly increasing; if $a\geq b$, then $f(x)-g(x)=-2\mu((b,a)\times(0,x))$, which is weakly decreasing. Thus, in either case, we can apply Proposition~\ref{general-hom-vanishing} to conclude. \end{proof}

	\bibliographystyle{alpha}
	\bibliography{a_preprojective_category.bib}{}

\newcommand{\etalchar}[1]{$^{#1}$}
\begin{thebibliography}{HKM{\etalchar{+}}13}

\bibitem[Asa22]{A22}
Sota Asai.
\newblock Bricks over preprojective algebras and join-irreducible elements in
  coxeter groups.
\newblock {\em Journal of Pure and Applied Algebra}, 226:106812, 2022.

\bibitem[ASS06]{ASS}
Ibrahim Assem, Daniel Simson, and Andrzej Skowroński.
\newblock {\em Elements of the Representation Theory of Associative Algebras.
  Volume 1. Techniques of Representation Theory}.
\newblock Cambridge University Press, 2006.

\bibitem[BB05]{BB}
Anders Björner and Francesco Brenti.
\newblock {\em Combinatorics of Coxeter groups}.
\newblock Springer, 2005.

\bibitem[BCB20]{BCB20}
Magnus~Bakke Botnan and William Crawley-Boevey.
\newblock Decomposition of persistence modules.
\newblock {\em Proceedings of the {A}merican mathematical society},
  148:4581--4596, 2020.

\bibitem[ES06]{ES}
Karin Erdman and Andrzej Skowroński.
\newblock The stable calabi--yau dimension of tame symmetric algebras.
\newblock {\em Journal of the Mathematical Society of Japan}, 58, 2006.

\bibitem[GGKK15]{GGKK}
Roman Glebov, Andrzej Grzesik, Tereza Klimo\v{s}ová, and Daniel Král'.
\newblock Finitely forcible graphons and permutons.
\newblock {\em J. Combin. Theory Ser. B}, 110:112--135, 2015.

\bibitem[HKM{\etalchar{+}}13]{HK}
Carlos Hoppen, Yoshiharu Kohayakawa, Carlos~Gustavo Moreira, Balázs Ráth, and
  Rudini Menezes~Sampaio.
\newblock Limits of permutation sequences.
\newblock {\em J. Combin. Theory Ser. B}, 103:93--113, 2013.

\bibitem[IRT23]{IRT23}
Kiyoshi Igusa, Job~Daisie Rock, and Gordana Todorov.
\newblock Continuous quivers of type {$A$} ({I}) {F}oundations.
\newblock {\em Rendiconti del Circolo Matematico di Palermo Series 2},
  72:833--868, 2023.

\bibitem[IT15]{IT15}
Kiyoshi Igusa and Gordana Todorov.
\newblock Continuous cluster categories {I}.
\newblock {\em Algebras and Representation Theory}, 18:65--101, 2015.

\bibitem[Miz14]{Miz}
Yuya Mizuno.
\newblock Classifying $\tau$-tilting modules over preprojective algebras of
  {D}ynkin type.
\newblock {\em Math. Z.}, 277:665--690, 2014.

\bibitem[ORT15]{ORT}
Steffen Oppermann, Idun Reiten, and Hugh Thomas.
\newblock Quotient-closed subcategories of quiver representations.
\newblock {\em Compositio Math.}, 151:568--602, 2015.

\bibitem[Oud15]{Obook}
Steve~{Y.} Oudot.
\newblock {\em Persistence theory : from quiver representations to data
  analysis}.
\newblock American Mathematical Society, Providence, Rhode Island, 2015.

\bibitem[PRY25]{PRY}
Charles Paquette, Job~Daisie Rock, and Emine Y{\i}ld{\i}r{\i}m.
\newblock Categories of generalized thread quivers.
\newblock {\em arXiv:2410.14656v2 [math.RT]}, 2025.

\bibitem[Roc19]{R20}
Job~Daisie Rock.
\newblock Continuous quivers of type {$A$} ({II}) the {A}uslander-{R}eiten
  space.
\newblock {\em arXiv:1910.04140 [math.RT]}, 2019.

\end{thebibliography}
\end{document}